\numberwithin{equation}{section}
\journal{arXiv}
\newtheorem{Th}{Theorem}
\newtheorem{Le}{Lemma}
\newtheorem{Co}{Corollary}
\newtheorem{Pro}{Proposition}
\newtheorem{Rem}{Remark}
\newcommand{\N}{\mathbb{N} }
\newcommand{\R}{\mathbb{R} }
\newcommand{\BS}{\mathbb{S} }
\newcommand{\LC}{\left ( }
\newcommand{\RC}{\right ) }
\newcommand{\LD}{\left \{ }
\newcommand{\RD}{\right \} }
\newcommand{\LZ}{\left | }
\newcommand{\RZ}{\right | }
\newcommand{\DS}{\displaystyle }
\newcommand{\LN}{\left \|  }
\newcommand{\RN}{\right \| }
\DeclareMathOperator*{\esssup}{ess\sup}
\DeclareMathOperator{\diag}{diag}
\DeclareMathOperator{\id}{id}
\DeclareMathOperator*{\argmax}{argmax}
\begin{document}

\begin{frontmatter}
\title{Optimal control problem of evolution equation governed by hypergraph Laplacian}

 \author[Fukao]{Takeshi Fukao}
 \ead{fukao@math.ryukoku.ac.jp}

 \author[Ikeda,Ikeda2]{Masahiro Ikeda}
 \ead{masahiro.ikeda@keio.jp/masahiro.ikeda@riken.jp}

 \author[Uchida]{Shun Uchida\corref{Shun}}
 \ead{shunuchida@oita-u.ac.jp}

 \cortext[Shun]{Corresponding author}

 \address[Fukao]{
	Faculty of Advanced Science and Technology, 
	Ryukoku University,\\
	1-5 Yokotani, Seta Oe-cho, Otsu, Shiga,
	520-2194, Japan.}
 \address[Ikeda]{
	Department of Mathematics, 
    Faculty of Science and Technology,   
	Keio University, 
	3-14-1 Hiyoshi Kohoku-ku, Yokohama,
	223-8522, Japan.}
 \address[Ikeda2]{
	Center for Advanced Intelligence Project, RIKEN, 
	Tokyo, 103-0027, Japan.}
 \address[Uchida]{
	Faculty of Science and Technology,  
	Oita University,\\
	700 Dannoharu, Oita, 
	870-1192, Japan.}

\begin{abstract}
In this paper, we consider an optimal control problem of an ordinary differential inclusion 
governed by the hypergraph Laplacian, 
which is defined as a subdifferential of a convex function and then is a set-valued operator.
We can assure the existence of optimal control for a suitable cost function 
by using methods of a priori estimates established in the previous studies.
However, due to the multivaluedness of the hypergraph Laplacian, 
it seems to be difficult to derive the necessary optimality condition for this problem. 
To cope with this difficulty, 
we introduce an approximation operator based on the approximation method of the hypergraph, so-called ``clique expansion.'' 
We first consider the optimality condition of the approximation problem with the clique expansion of the hypergraph Laplacian
and next discuss the convergence to the original problem.
In appendix,  we state some basic properties of the clique expansion of the hypergraph Laplacian for future works.
\end{abstract}

\begin{keyword}
Optimal control problem, hypergraph Laplacian, 
nonlinear evolution equation, set-valued differential equation, subdifferential, constraint problem.

\MSC[2020] Primary 49K15; Secondary 05C65, 34G25, 49J15, 49J53.

\end{keyword}

\end{frontmatter}


\section{Introduction}

\subsection{Definition and Background of Hypergraph Laplacian} 

The {\it (weighted) hypergraph} is defined as a triplet $G = (V, E, w)$ of
\begin{itemize}
\item a finite set $V = \{ 1 , 2 , \ldots, N  \}$,

\item a family $E \subset 2^{V} $ of subsets with more than one element of $V$,
that is, $\# e \geq 2  $ for each $e\in E$,

\item a function  $w : E \to (0, \infty)$.

\end{itemize}
This $G$ can be interpreted as a model of a network in which 
the vertices numbered from $1$ to $N$ 
are connected by each hyperedge $ e \in E$.
When $e\in E $ consists of just two elements, then $e = \{ i , j \}$ corresponds to a line segment
connecting $i$-th and $j$-th vertices. 
Hence the {\it usual graph,} which includes the nodes and usual edges, 
can be represented by $G$ with $E $ satisfying $ \# e =2  $ for every $ e \in E$.
The hypergraph is a generalization of the usual graph 
which allows the grouping of multiple members.
For instance, 
the model of relationship of co-authorship between researchers
and communities in the social media
can be described by the hypergraph (see Figure \ref{Graphs}).

\begin{figure}[h]
 \centering
 \includegraphics[keepaspectratio, scale=0.40]
      {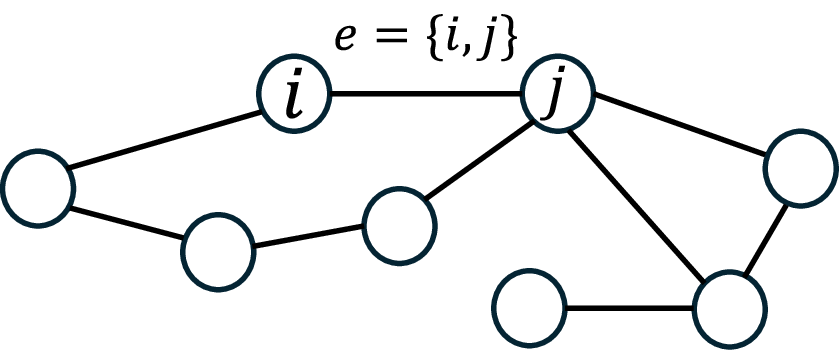}
      \hspace{4mm} 
 \includegraphics[keepaspectratio, scale=0.40]
      {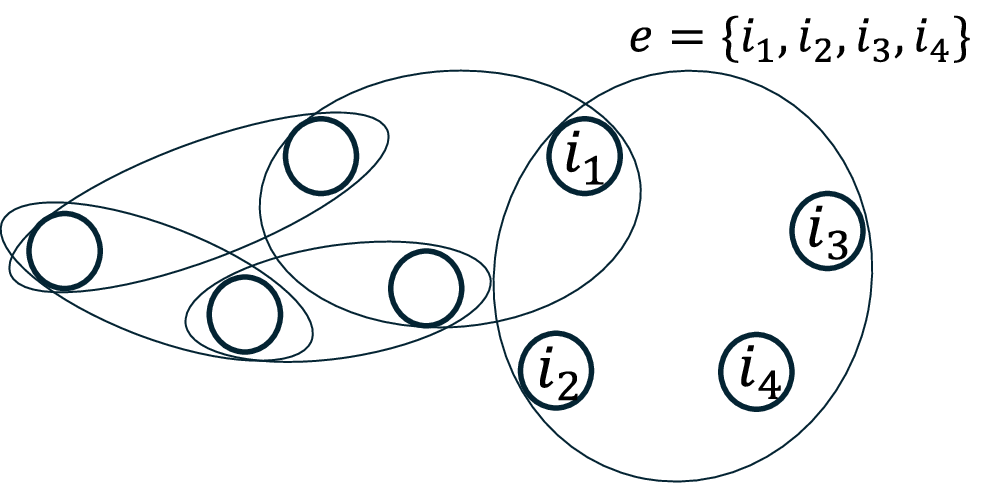}
 \caption[Usual Graph and Hypergraph]{
$V = \{ 1 ,\ldots, N \}$ corresponds to the set of vertices labeled from $1$ to $N$.
Each edge $e \in E $ represents a line segment or a grouping connecting nodes 
included in $e$. 
Throughout this paper, we say $G = (V ,E , w )$ is a {\it usual graph}
if every $e \in E  $ consists of just two elements,
i.e., $G $ represents the left figure
in order to distinguish from the hypergraphs
which genuinely possess hyperedges with  more than two elements (right figure).
}
 \label{Graphs}
\end{figure}

It is well known that we can define the graph Laplacian on the usual graph by the following way.
Let 
\begin{equation}
\label{usu-wei} 
w_{ij} := \begin{cases}
~~
w( \{ i , j \} ) ~~& ~~ \text{ if } \{ i, j \} \in E ,  \\
~~~~~0 ~~& ~~ \text{ if } \{ i, j \} \not \in E , 
\end{cases}
~~~~
d_ i := 
\sum_{j=1}^{N} w _{ ij } 
\end{equation}
and define square matrices of order $N$
by $W := (w _{ij} ) _{ 1 \leq i , j \leq N }$ and $D := \diag ( d_1 ,\ldots , d_ N  )$.
Since 
$w _ {ij} >0 $ if $i$-th and $j$-th vertices are connected
and 
$w _ {ij} = 0 $ if $i$-th and $j$-th vertices are disconnected,
the matrix $W$, which is called (weighted) adjacency matrix,
represents how the vertices are connected to each other in the graph.
On the other hand, 
the matrix $D $, which is called (weighted) degree matrix, describes 
the (weighted) number of edges attached to each vertex.
Then by considering 
that the weight on the edge $w_ {ij} $ implies the preference for the selection of the pass,
we can obtain 
the matrix $D ^{-1} (D -A )$ as the transition matrix of the random walk on this graph.
Here $L := D -A $, which essentially describes the movement of the particles,
is called the graph Laplacian.

The network structure of the usual graph can be investigated 
through the study of eigenvalues and eigenvectors of the graph Laplacian, 
which is called ``spectral graph theory'' established in the 1980s.
This theory has been applied to the algorithm of measuring the importance of website,
which is called PageRank, and the Cheeger type inequality, 
which is related to the cluster analysis (see, e.g., \cite{BP, Ch97, Ch07, CD80} and references therein).
In order to develop the spectral graph theory to more general networks, 
the Laplacian on hypergraphs has been proposed in several recent articles. 
In this paper, we focus on the definition by Yuichi Yoshida
(see \cite{Yoshida00}),
which is based on the theory of the discrete optimization problems and submodular functions
(other definition of the Laplacian on hypergraphs can be found in, e.g., \cite{Jost}).

We here state the definition of hypergraph Laplacian by \cite{Yoshida00} (see also \cite{F-I-U, I-U}).
For each $e \in E$, we define $f_e : \R ^N \to [0 , \infty ) $ by 
\begin{equation}
\label{Def_fe} 
	f_e (x) 
:= \max _{ i, j  \in e} | x _ i  - x _ j  |  
\end{equation}
and 
\begin{equation}
\varphi _{G, p } (x)
:= \frac{1}{p} \sum_{e \in E} w (e) (f _e (x)) ^p ~~~~~p\in [1 ,\infty ) .
\label{Def_phi} 
\end{equation}
When we regard the $i$-th component $x_ i $ of $x \in \R ^N $
as the heat or the number of particles on the $i$-th node of $V$,
then $f_e $ represents the heat/density gradient on the connection $e \in E$
and $\varphi _{G, p }$ can be regarded as a kind of the $p$-Dirichlet energy on the hypergraph $G$.

If $G$ is the usual graph, these can be written  by 
\begin{equation}
\label{usu_fe} 
	f_e (x) 
:=  | x _ i  - x _ j  |  ,~~~~\text{where } e = \{ i, j \},
\end{equation}
and 
\begin{equation}
\label{usu_phi} 
\varphi _{G, p } (x)
:= \frac{1}{2p} \sum ^N _{i , j = 1} w _ {i j} |x_ i  - x_ j | ^p ,
\end{equation}
where $w_ {ij }$ is defined in \eqref{usu-wei}.
Hence we can show that  $\varphi _{G, p }  : \R ^N \to \R $ with $p >1$ is Fr\'{e}chet differentiable 
and especially the derivative $D \varphi _{G, 2 } $ coincides with  the usual graph Laplacian $ L = D -A $.
On the other hand, 
if $G$ essentially possesses a hyperedge with more than two nodes,
$ f_e $ and $ \varphi _{G ,p }$ are not differentiable 
on
$ \bigcup _{ i, j  \in e } \{  x \in \R ^N ;~x_i  = x_ j \}$
and 
$\bigcup _{e \in E } \bigcup _{ i, j  \in e } \{  x \in \R ^N ;~x_i  = x_ j \} $, 
respectively,
due to the singularity of derivative of the max-function.

Since $ f_e $ and $\varphi _{G ,p }$ are continuous and convex on $\R ^N $, however,
we can define the subdifferential operator of them.
Here when $\phi  : \R ^N \to ( - \infty , + \infty ]$
is a proper ($\phi  \not \equiv + \infty $) lower semi-continuous and convex function,
its subgradient at $x \in \R ^N  $ is defined by 
\begin{equation}
\label{sub}
\partial \phi  (x) := \{ \eta \in \R ^N ;~ 
\eta \cdot (z - x ) \leq \phi  (z) - \phi  (x)
 ~~\forall z \in \R ^ N   \} 
\end{equation}
and $ \partial \phi : \R ^N \to 2 ^{\R ^N }$ is called the subdifferential  operator of $\phi $
(basic properties of the subdifferential can be found in, e.g., \cite{Bar, Bre, Rock}).
The subdifferential of $f_e  $ and $\varphi _{G, p }  $
can be represented by 
\begin{align}
\partial f_ e  (x)  &=
\argmax _{b \in B_e} b \cdot x  = \left\{ b_e  \in B_e ;~~b_e \cdot x = \max _{b \in B_e} b \cdot x \right\}  , \label{fe}  \\
\partial \varphi _{G, p} (x) 
&=  \sum_{e \in E } w(e) (f_e (x) ) ^{p-1} \partial   f_e (x) \notag \\
	&= \LD \sum_{e \in E } w(e) (f_e (x) ) ^{p-1} b_e ;~b_e \in \argmax _{b \in B_e } b \cdot x  \RD , \label{Laplacian} 
\end{align}
where $B _e \subset \R ^N $  is defined by 
\begin{align}
B_e & := \text{conv}  \{ \bm{1} _{i }- \bm{1} _ {j} ;~ i , j  \in e  \}   , \notag   \\
&=  \text{conv}  \LD (\ldots , 0 , \stackrel{i}{\stackrel{\vee}{1}},0, \ldots  ,0 , \stackrel{j}{\stackrel{\vee}{- 1}},0,  \ldots  )
\in \R^ N  ; ~ i ,j   \in e \RD \label{BP}  
\end{align}
and $\bm{1} _ {i } $ is the $i$-th unit vector of the canonical basis of $\R ^N $.
Note that 
$\partial f_e (x)$ and 
$\partial \varphi _{G,p } (x) $
genuinely return set-values
on
$ \bigcup _{ i, j  \in e } \{  x \in \R ^N ;~x_i  = x_ j \}$
and 
$\bigcup _{e \in E } \bigcup _{ i, j  \in e } \{  x \in \R ^N ;~x_i  = x_ j \} $, 
respectively.

The subdifferential $ \partial \varphi _{G, p}  : \R ^N \to 2 ^{\R ^N } $
is called {\it hypergraph ($p$-)Laplacian}, where $1 \leq p < \infty $.
This nonlinear multivalued operator is applied to
study the Cheeger like inequality,
the cluster structure, and the PageRank of network represented
by the  hypergraph in information science (see e.g., \cite{FSY, IMTY, L-M, TMIY}).
In \cite{I-U}, we consider the nonlinear set-valued ODE $ x'(t) + \partial \varphi _{G ,p } (x (t)) \ni 0$
and discuss the asymptotic behavior of solutions  via the Poincar\'{e}--Wirtinger type inequality.

\subsection{Setting of Constraint Problem and Optimal Control Problem}

In \cite{F-I-U}, 
we set $N = n + m $
and consider 
the ODE governed by the hypergraph Laplacian $ \partial \varphi _{G ,p } $
under the situation where
the heat at vertices labeled from  $n + 1 $ to $n+ m $ are determined by some given functions
$a _ j : [0, T ] \to \R $ ($j=1 , \ldots, m$), namely, 
with the constraint condition 
\begin{equation}
\label{constraint} 
x_{n+j} (t) = a _ j (t) ~~~~~\forall t \in [0, T ] ~~~~~j=1,\ldots ,m  
\end{equation}
(see Figure \ref{Givendata}).
This problem can be reduce to the evolution equation governed by the subdifferential of the indicator function as follows.
Let the given data be unified by $a : [0 , T ] \to \R  ^N $ as
\begin{equation}
\label{a} 
a (t) := ( 0, \ldots , 0 , a_1 (t) , \ldots , a_ m (t)) ~~~~t\in [0,T ] 
\end{equation}
and the family of them be written by 
\begin{equation}
\mathscr{C}
=
\LD
a  \in  W^{1, 2 } (0, T ; \R ^N  ) ; ~~ 
a (\cdot ) = ( 0, \ldots , 0 , a_1 (\cdot ) , \ldots , a_ m (\cdot  )) 
\RD .
\label{control-set} 
\end{equation}
Here and henceforth, we shall use 
$L^r (0 ,T ; \R ^N )$ 
and 
$ W ^{1, r} (0,T ; \R ^N )$ 
in order to denote the standard Lebesgue and Sobolev space, respectively:
\begin{align*}
&\DS L^{r} (0, T ; \R ^V) := \LD  g  : ( 0, T)  \to \R ^ N  ; 
~
\begin{matrix}
 g \text{ is Lebesgue measurable  }  \\[2mm]
\DS \text{and } \int_{0}^{T}  \| g(t) \| ^r  dt < \infty .
\end{matrix}
 \RD , ~r \in [1 , \infty ) , \\
&\DS L^{\infty } (0, T ; \R ^N )  := \LD  g 
 : ( 0, T)  \to \R ^ N  ; 
~
\begin{matrix}
 g \text{ is Lebesgue measurable   } \\[2mm]
\DS \text{and } \esssup _{ 0 < t  < T }  \| g(t) \|   < \infty .
\end{matrix}
 \RD  , \\[2mm]
&\DS W^{1, r } (0, T ; \R ^N ) := \LD  g \in L^r (0, T ; \R ^N );
			~~  g' \in L^r (0, T ; \R ^N )   \RD ,~~~ r \in [1 , \infty ] ,
\end{align*}
where $g'  := \frac{d}{dt} g  $ is the time derivative of $g$ in the distributional sense.
Define  the constraint set 
 $K_a (t) \subset \R ^N $ with $a \in \mathscr{C}$ and $t \in [0,T ]$
by 
\begin{equation*}
K_a (t) :=
 \LD x \in \R ^N ;~~~
\begin{matrix}
x= (x _1 , \ldots , x _n , a_1 (t) ,\ldots , a_m (t)  ), \\[2mm]
x_i \in \R ,  ~~~i =1,\ldots ,n . 
\end{matrix}
\RD ,
\end{equation*}
which is an affine set where  
the former $n$ components $x_ 1 ,\ldots , x_n $ are chosen freely 
and the latter $m$ components $x_ {n+ 1} ,\ldots , x_{n + m } $ are fixed
by given data $a_ 1 (t ) , \ldots , a_m (t)$.
Set  the indicator on $K _a (t) \subset \R ^N $ by 
\begin{equation}
\label{Indi} 
I_{K_a (t)} (x)=\begin{cases}
~~0 ~~&~~\text{ if } x \in K_a  (t) , \\
~~ + \infty ~~&~~\text{ otherwise.}
\end{cases}
\end{equation}
Then the problem with the constraint  condition \eqref{constraint} 
can be represented by the following Cauchy problem of  evolution equation:
\begin{equation}
~~~~~~~
\begin{cases}
~~\DS x'(t) + \partial \varphi _{G, p } (x (t)) + \partial I_{K _a (t )} (x (t) ) \ni h (t), ~~~t \in (0, T) ,  \\
~~x(0) = x_ 0 ,
\end{cases}
\label{Original-Problem}
\tag{P} 
\end{equation}
where
$ x _ 0 \in \R ^ N $ is the initial state
and $h : [0,T ] \to \R ^ N $ is the given external force.
The well-posedness of \eqref{Original-Problem} has been assured  in \cite{F-I-U} as follows:
\begin{Pro}
\label{Well-posedness-Original} 
Let  $T < \infty$, $a\in \mathscr{C} $,  $x_ 0 \in K_a (0)$,  and $h\in L ^{2} (0, T ; \R ^N )$. 
Then \eqref{Original-Problem} possesses
 a unique solution $x \in W ^{1,2 } (0,T ; \R ^N )$ satisfying $x (t) \in K_a (t)$ for every $ t \in [ 0, T ]$.
\end{Pro}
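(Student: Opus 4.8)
The plan is to remove the time dependence of the constraint by an affine change of unknown, reducing \eqref{Original-Problem} to a standard evolution equation governed by a time-dependent subdifferential whose effective domain is fixed. Concretely, set $K_0 := \{ z \in \R ^N ;~ z_{n+1} = \cdots = z_{n+m} = 0 \}$ and substitute $y(t) := x(t) - a(t)$. Since $a \in \mathscr{C} \subset W^{1,2}(0,T;\R ^N)$, the constraint $x(t) \in K_a(t)$ is equivalent to $y(t) \in K_0$, and because $K_a(t)$ is the translate of the fixed subspace $K_0$ by $a(t)$, the normal cone is translation invariant, i.e. $\partial I_{K_a(t)}(x) = \partial I_{K_0}(y)$. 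Writing $g := h - a' \in L^2(0,T;\R ^N)$ and $y_0 := x_0 - a(0) \in K_0$, the problem becomes
\begin{equation*}
y'(t) + \partial\varphi_{G,p}\bigl(y(t) + a(t)\bigr) + \partial I_{K_0}(y(t)) \ni g(t), \qquad y(0) = y_0.
\end{equation*}

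Next, because $\varphi_{G,p}$ is finite and continuous on all of $\R ^N$, the Moreau--Rockafellar sum rule applies, so that for each fixed $t$ one has $\partial\varphi_{G,p}(\cdot + a(t)) + \partial I_{K_0} = \partial\psi_t$, where $\psi_t(y) := \varphi_{G,p}(y + a(t)) + I_{K_0}(y)$ is a proper, lower semi-continuous, convex function whose effective domain is the fixed subspace $K_0$, independent of $t$. I would thus recast the problem as the evolution equation $y'(t) + \partial\psi_t(y(t)) \ni g(t)$ and invoke the abstract theory of evolution equations governed by time-dependent subdifferentials (of Kenmochi type). To construct a solution I would employ Moreau--Yosida regularization: replace $\partial\psi_t$ by its Yosida approximation, which is Lipschitz in $y$ and measurable in $t$, so the regularized equation admits a unique absolutely continuous solution $y_\lambda$. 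Testing by $y_\lambda'$ and by $y_\lambda$ and exploiting the convexity/energy structure yields a priori bounds for $y_\lambda$ in $W^{1,2}(0,T;\R ^N)$ uniform in $\lambda$; passing to the limit $\lambda \to 0$ via monotonicity and lower semi-continuity of $\psi_t$ produces a solution $y \in W^{1,2}$.

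Uniqueness follows by subtracting two solutions, pairing the difference with $y_1 - y_2$, using the monotonicity of $\partial\psi_t$, and applying Gronwall's inequality. Finally $x(t) := y(t) + a(t)$ recovers the solution of \eqref{Original-Problem}: it lies in $W^{1,2}$ since both $y$ and $a$ do, and it satisfies $x(t) \in K_a(t)$ for every $t \in [0,T]$ by construction, while uniqueness for $x$ is inherited from that for $y$.

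The step I expect to be the main obstacle is the verification of the time-regularity hypothesis of the abstract theorem, that is, controlling how $\psi_t$ varies with $t$. This reduces to estimating $|\varphi_{G,p}(y + a(t)) - \varphi_{G,p}(y + a(s))|$. Since each $f_e$ is globally Lipschitz and $\varphi_{G,p}$ exhibits $p$-th power growth, one obtains a bound of the form $C(1 + |y|)^{p-1}\,\|a(t) - a(s)\|$, and because $a \in W^{1,2}$ one has $\|a(t) - a(s)\| \le \int_s^t \|a'(\tau)\|\,d\tau$. This places the time-dependence within the scope of the Kenmochi condition, with modulus governed by the $L^2$ function $\|a'\|$; tracking the $(1+|y|)^{p-1}$ growth against the available a priori bounds is the delicate point, and it is where I would concentrate the technical effort.
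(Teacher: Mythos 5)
First, a structural point: this paper never proves Proposition \ref{Well-posedness-Original} itself --- it quotes the result from the earlier work \cite{F-I-U} --- so there is no in-paper proof to compare against line by line; what can be compared is your route versus the machinery the paper visibly relies on. Judged on that basis, your proposal is correct and is essentially a clean repackaging of the same circle of ideas. The translation $y = x - a$ is legitimate: $K_a(t)$ is the translate of the fixed subspace $K_0$, so the constraint becomes $y(t) \in K_0$ and the normal cone is translation invariant; since $\varphi_{G,p}$ is finite and continuous on all of $\R^N$, the Moreau--Rockafellar sum rule does give $\partial \varphi_{G,p}(\cdot + a(t)) + \partial I_{K_0} = \partial \psi_t$, so solutions of your transformed inclusion pull back to solutions of \eqref{Original-Problem}, and uniqueness is immediate from monotonicity of both operators plus Gronwall. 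You also correctly isolate the only delicate step, namely the Kenmochi-type time-regularity of $t \mapsto \psi_t$: your bound $|\psi_t(y) - \psi_s(y)| \le C (1 + \|y\|)^{p-1} \int_s^t \|a'(\tau)\|\, d\tau$, which follows from the gradient bound \eqref{bound} and $a \in W^{1,2}(0,T;\R^N) \subset C([0,T];\R^N)$, is exactly the hypothesis of the abstract theorem in \cite{Ken75}, because that condition is localized to balls $\|y\| \le r$, so the factor $(1+\|y\|)^{p-1}$ is absorbed into the $r$-dependent modulus and the a priori estimates keep the solution in such a ball. This matches the fingerprints the present paper leaves of the original argument: the characterization \eqref{Const-sub} of $\partial I_{K_a(t)}$, the test functions $x - a$ and $(x-a)'$ used in the proof of Proposition \ref{Exi-OC-Existence} (which are precisely your change of variables in estimate form), and the citation of Kenmochi's time-dependent subdifferential theory \cite{Ken75}. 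The difference is that \cite{F-I-U} apparently works with the moving-domain functional $\varphi_{G,p} + I_{K_a(t)}$ directly, whereas you fix the domain once and for all; what your version buys is that the ``domain-shift'' half of Kenmochi's condition becomes trivial, leaving only the functional-variation estimate you already supplied.
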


Based on this fact, we define the solution operator 
$\Lambda : \mathscr{C} \to W ^{1, 2 } (0 ,T ; \R ^N) $ by the relationship
\begin{equation}
\label{SolOp-Original} 
\Lambda (a) =x ~: \text{ the solution to \eqref{Original-Problem} with the given data } a \in \mathscr{C}
\end{equation}
and the cost function 
$J : \mathscr{C} \to [ 0 , \infty )  $ by the relationship
\begin{equation}
\label{Cost-Original} 
J (a ) = 
\frac{1}{2}
\int_{0}^{T} \|  \Lambda (a )  (t) - x _{\ast} (t) \| ^2 dt 
+  
\frac{1}{2}
\int_{0}^{T} \|  a (t) \| ^2 dt ,
\end{equation}
where and henceforth $  x _{\ast} \in L ^2 (0, T ; \R ^N )$ stands for the given target function.
In this paper, 
we consider the optimal control problem 
of the nonlinear set-valued ODE governed by the hypergraph Laplacian \eqref{Original-Problem}
in which we find the minimizer $a^ {\star } \in \mathscr{C} $ of the cost function \eqref{Cost-Original}.

\begin{figure}[H]
 \centering
 \includegraphics[keepaspectratio, scale=0.5]
      {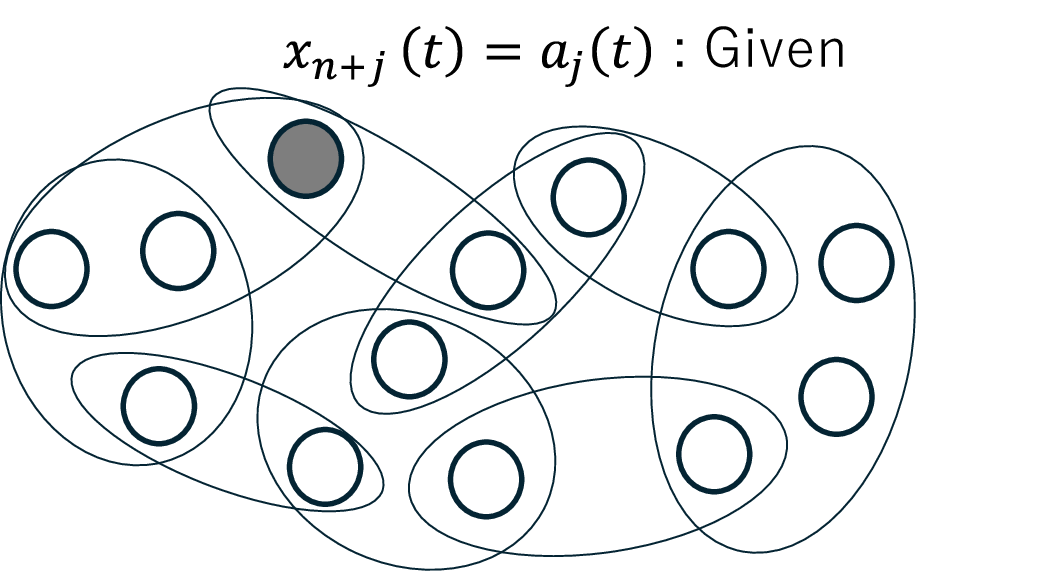}
 \caption[Given data]{We consider the case where 
the heat on the vertices numbered from $ n+ 1$ to $n+ m $ 
are given as $x _{n + j } (t) = a _j (t)$ ($j=1 , \ldots,  m$).
This can be interpreted as a situation where the observer internally manipulates 
the heat of the network. 
One may be able to 
divide the hypergraph $G$ into groups ``close to'' and ``far from''  the controlled nodes
(colored in the above)
 by investigating how to ease to control it.
}
 \label{Givendata}
\end{figure}

\subsection{Existence of Optimal Control}

Since  a priori estimates of the solution to \eqref{Original-Problem} 
has been already established in our previous study \cite{F-I-U},
we can assure the existence of optimal controls of our problem,
i.e., global minimizers of the cost function \eqref{Cost-Original}.
Let the admissible set of controls with parameter $M > 0 $ be denoted by 
\begin{equation}
\label{admissible}
\mathscr{U} ^M _{{\rm ad}} := 
\LD  a \in \mathscr{C } ; ~~ \int_{0}^{T } \| a ' (t) \| ^2 dt \leq M \RD  .
\end{equation}

\begin{Pro}
\label{Exi-OC-Existence} For any $M > 0$, there exists at least one $a ^ {\star } \in \mathscr{U} ^M _{{\rm ad}}  $
such that 
\begin{equation}
\label{minimize-original}
J (a ^ {\star } )  = \min _{a  \in \mathscr{U} ^M _{{\rm ad}} } J (a ) . 
\end{equation}
\end{Pro}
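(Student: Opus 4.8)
The plan is to apply the direct method of the calculus of variations. Since $J \geq 0$ on the nonempty set $\mathscr{U}^M_{\rm ad}$ (the control $a\equiv 0$ belongs to it), the infimum $\mu := \inf_{a \in \mathscr{U}^M_{\rm ad}} J(a)$ is a finite nonnegative number, and I would fix a minimizing sequence $\{a_k\} \subset \mathscr{U}^M_{\rm ad}$ with $J(a_k) \to \mu$. From the definition of $\mathscr{U}^M_{\rm ad}$ the derivatives $a_k'$ are bounded in $L^2(0,T;\R^N)$ by $M$, while from $J(a_k) \to \mu < \infty$ the term $\tfrac{1}{2}\int_0^T \|a_k(t)\|^2\,dt$ is bounded, so $\{a_k\}$ is bounded in $W^{1,2}(0,T;\R^N)$. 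By reflexivity of this space I would extract a subsequence (not relabeled) with $a_k \rightharpoonup a^\star$ weakly in $W^{1,2}(0,T;\R^N)$; since $W^{1,2}(0,T;\R^N)$ embeds compactly into $C([0,T];\R^N)$, I may also assume $a_k \to a^\star$ strongly in $C([0,T];\R^N)$.

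Next I would verify that the weak limit is admissible. The condition that the first $n$ components of each $a_k$ vanish defines a closed linear subspace, hence is weakly closed, so $a^\star \in \mathscr{C}$; moreover, any pointwise initial compatibility needed for $\Lambda$ to be defined is preserved because $a_k(0)\to a^\star(0)$ under the uniform convergence. Because the $L^2$-norm is weakly lower semicontinuous, $\int_0^T \|(a^\star)'(t)\|^2\,dt \leq \liminf_{k\to\infty}\int_0^T \|a_k'(t)\|^2\,dt \leq M$, and therefore $a^\star \in \mathscr{U}^M_{\rm ad}$.

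The crux is to show that $J$ is sequentially weakly lower semicontinuous along this sequence, which reduces to controlling the solution operator $\Lambda$. Writing $x_k := \Lambda(a_k)$, the a priori estimates of \cite{F-I-U} underlying Proposition~\ref{Well-posedness-Original} provide a bound for $\{x_k\}$ in $W^{1,2}(0,T;\R^N)$ uniform in $k$, depending only on the bounds already obtained for $a_k$, the fixed $x_0$, and $h$. Passing to a further subsequence I would obtain $x_k \rightharpoonup x$ weakly in $W^{1,2}(0,T;\R^N)$ and $x_k \to x$ strongly in $C([0,T];\R^N)$. The main obstacle is identifying the limit as $x = \Lambda(a^\star)$: one must pass to the limit in the set-valued inclusion $x_k'(t) + \partial\varphi_{G,p}(x_k(t)) + \partial I_{K_{a_k}(t)}(x_k(t)) \ni h(t)$. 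I would treat the two maximal monotone terms by the standard demiclosedness argument, combining $x_k' \rightharpoonup x'$ in $L^2$ with the strong convergence $x_k \to x$ and the monotonicity of the subdifferentials. The moving constraint requires extra care since $K_{a_k}(t)$ depends on $a_k$; here the uniform convergence $a_k \to a^\star$ ensures that $x_k(t) \in K_{a_k}(t)$ passes to $x(t) \in K_{a^\star}(t)$, and the indicator term is again handled by its monotonicity. By the uniqueness in Proposition~\ref{Well-posedness-Original}, $x = \Lambda(a^\star)$.

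Finally, the strong convergence $\Lambda(a_k) \to \Lambda(a^\star)$ in $L^2(0,T;\R^N)$ yields $\int_0^T \|\Lambda(a_k)(t) - x_{\ast}(t)\|^2\,dt \to \int_0^T \|\Lambda(a^\star)(t) - x_{\ast}(t)\|^2\,dt$, while weak lower semicontinuity of the $L^2$-norm gives $\int_0^T \|a^\star(t)\|^2\,dt \leq \liminf_{k\to\infty} \int_0^T \|a_k(t)\|^2\,dt$. Combining these, $J(a^\star) \leq \liminf_{k\to\infty} J(a_k) = \mu$. Since $a^\star \in \mathscr{U}^M_{\rm ad}$ forces $J(a^\star) \geq \mu$, I conclude $J(a^\star) = \mu = \min_{a \in \mathscr{U}^M_{\rm ad}} J(a)$, which is the assertion.
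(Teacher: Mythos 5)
Your proposal is correct and follows essentially the same route as the paper: the direct method with a minimizing sequence, the a priori estimates from \cite{F-I-U} giving $W^{1,2}$-bounds and strong $C([0,T];\R^N)$-convergence of the states, identification of the limit inclusion via monotonicity/demiclosedness (with the moving constraint $K_{a_k}(t)$ handled through the uniform convergence of $a_k$, exactly as the paper does by modifying test functions), and uniqueness to conclude $x=\Lambda(a^\star)$. The only cosmetic differences are that you verify admissibility of the weak limit explicitly (the paper leaves this implicit) and close with weak lower semicontinuity of the control term where the paper uses the strong convergence $a_k\to a^\star$ to get $J(a_k)\to J(a^\star)$; both are valid.
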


\begin{proof}
Let $\{ a^ k \} _{ k \in \N }$ be a minimizing sequence of $J$. 
By the definition of the cost function  $J $ and the admissible set $ \mathscr{U} ^M _{{\rm ad}}  $,
we can see that  $\{ a^ k \} _{k \in \N }$ is uniformly bounded with respect to the norm of  $W^{1, 2 } ( 0, T ; \R ^N ) $
and there exists a subsequence which strongly converges in $C ([ 0, T ] ; \R ^N )$.
We here omit relabeling and denote the limit by $a ^ {\star }$, namely, let 
\begin{equation*}
a ^ k \to a ^ {\star } ~~\text{ strongly in } C([0, T]; \R ^N )
\text{ and  weakly in } W ^{1 ,2 }( 0, T; \R ^N )
\end{equation*}
as $k \to \infty$.
Set $ x ^k : = \Lambda (a ^ k )$, i.e.,  let $ x ^k $ be a unique solution to
\begin{equation}
\label{Sec1-1} 
\begin{cases}
~~\DS \frac{d}{dt}  x^ k (t) + \partial \varphi _{G, p } (x^k (t)) + \partial I_{ K _{a^ k } (t )} (x^k  (t) ) \ni h (t), ~~~t \in (0, T) ,  \\[2mm]
~~x^k (0) = x ^k _ 0 = (x_{01} ,\ldots , x_{0n } , a ^k _1 (0) , \ldots , a ^k _m (0) )  .
\end{cases}
\end{equation}
Recall  that
we impose  $x^k (0) \in K _{a^ k } ( 0  )  $ on the initial data in Proposition \ref{Well-posedness-Original}.
Let $\xi ^k \in \partial I _{K _{a^k} (\cdot  )} (x^k )$
and $\eta ^k \in  \partial \varphi _{G,p }(x^k ) $
stand for the sections of 
$\partial I _{K _{a^k} (\cdot  )} (x^k )$
and $\partial \varphi _{G,p } (x^k ) $ satisfying \eqref{Sec1-1},  that is, 
suppose that $\xi ^k $ and $\eta ^k $ satisfy 
\begin{equation*}
 \frac{d}{dt}  x ^k (t)  + \eta ^k (t) + \xi ^k (t) =h (t)
\end{equation*}
and 
 $\xi ^k (t) \in \partial I _{K _{a^k} (t  )} (x^k (t) )$
and $\eta ^k (t)  \in  \partial \varphi _{G,p }  (x^k (t)  ) $
for a.e. $t \in ( 0 , T ) $.

For the sake of completeness of this paper,
we here check a priori estimates established in \cite{F-I-U}.
Henceforth, $C_ 1 $ will denote  a general constant which is independent of the index $k$. 
We first multiply \eqref{Sec1-1} by $ x ^k -a ^ k$.
In \cite{F-I-U}, we prove that 
for any $t \in [0,T]$ and $z \in  K_a (t)$, 
the subdifferential of $I _{K_ a (t)}$ can be characterized by 
\begin{equation}
\label{Const-sub} 
\partial I _{K_ a (t)} (z ) =
 \LD \xi  \in \R ^N  ;~~~
\begin{matrix}
\xi= (0,  \ldots , 0 , \xi _{n+1} , \ldots , \xi _{n+m}), \\[2mm]
\xi _ {n+j } \in \R , ~~~j =1 ,\ldots , m. 
\end{matrix}
\RD .
\end{equation}
Hence we have 
\begin{equation*}
\xi ^k (t) \cdot (  x ^k   (t) -a ^ k (t ) ) = 0 ~~\text{ for a.e. } t\in (0, T )
\end{equation*}
since $ x^ k (t) \in   K_a (t) $ and  
\begin{align*}
&x ^ k  (t) -a ^ k (t )  \\
=&
( x ^k _1 (t) , \ldots, x ^ k _n (t) , a ^k _1 (t ),  \ldots , a ^k _ m (t )   )
-
( 0 , \ldots, 0 , a ^k _1 (t ),  \ldots , a ^k _ m (t )   ) \\
=& 
( x ^k _1 (t) , \ldots, x ^k _n (t) , 0, \ldots , 0  ). 
\end{align*}
By the definition of the subgradient \eqref{sub}, we obtain 
\begin{align*}
& \frac{1}{2} \frac{d}{dt}  \|  x ^k (t) - a ^k (t) \| ^2   + \varphi _{G,p } ( x ^k (t)) 
- \varphi _{G,p } ( a ^k (t)) \\
&\hspace{3cm}    \leq \LN  h (t) - \frac{d}{dt } a ^k (t)  \RN \|  x ^k (t) - a ^k (t) \|  .
\end{align*}
We here note that  $ f_ e (z) \leq 2 \| z  \|$ and 
\begin{equation}
\label{bound}
\varphi _{G,p }  (z ) \leq \kappa _{G,p }  \| z \| ^p , 
\hspace{5mm} 
\| \eta \| \leq \kappa _{G,p }   \| z \| ^{p-1 } 
\end{equation}
hold 
 for every $ z \in \R ^V$ and $\eta \in \partial \varphi _{G,p } (z )$, 
where
$ \kappa _{G,p }   $ is a positive constant
which depends only on $ N=n+  m  , E , w  $ and $ p $.  
By this fact together with the boundedness of $\{ a^k \} _{k \in \N}$, 
we get $ \sup_{ 0 \leq t \leq T } \varphi _{G,p } ( a ^k (t)) \leq C_ 1  $.
Then  the Gronwall inequality and $ \int_{0}^{T} \|   \frac{d}{dt } a ^k (t) \|  ^2 dt \leq M  $ lead to  
\begin{equation}
\label{Sec1-2} 
\sup _{0 \leq t \leq T} \|  x ^k  (t)  \|   \leq C_1 .
\end{equation}
By \eqref{bound}, we also have 
\begin{equation}
\label{Sec1-2-1} 
\sup _{0 \leq t \leq T} \|  \eta ^ k  (t) \|   \leq C_1 .
\end{equation}
Next multiplying \eqref{Sec1-1} by
\begin{align*}
\frac{d}{dt} (  x ^ k  (t) -a ^ k (t )  )
=
\LC \frac{d}{dt}  x ^k _1 (t) , \ldots, \frac{d}{dt} x ^k _n (t) , 0 ,  \ldots , 0  \RC  
\end{align*}
and using \eqref{Const-sub} again,
we have 
\begin{equation*}
\xi ^k (t) \cdot \frac{d}{dt} (  x ^k   (t) -a ^ k (t ) ) = 0 ~~\text{ for a.e. } t\in (0, T )
\end{equation*}
and then 
\begin{equation*}
   \LN   \frac{d}{dt} (  x ^k (t) - a ^k (t) ) \RN    
 \leq \LN  h (t) - \frac{d}{dt } a ^k (t) - \eta ^k (t)  \RN  .
\end{equation*}
From $ a ^k \in \mathscr{U} ^M _{\rm ad}$ and \eqref{Sec1-2-1},
we can derive 
\begin{equation}
\label{Sec1-2-2} 
\int_{0}^{T} \LN  \frac{d}{dt}  x ^ k  (t) \RN ^2 dt   \leq C_1 .
\end{equation}
By the equation, we directly get 
\begin{equation}
\label{Sec1-2-3} 
\int_{0}^{T} \|  \xi ^ k  (t) \| ^2 dt   \leq C_1 .
\end{equation}

Now we discuss the convergence as $k \to \infty$. 
Due to \eqref{Sec1-2} and \eqref{Sec1-2-2},
we can apply the Ascoli-Arzela theorem 
and extract  a subsequence of $\{ x ^k \} _{ k \in \N }$
which strongly converges in 
$C ( [ 0, T ] ; \R ^N ) $
(we still employ the same index).
Let $x ^{\star } \in C([0,T]; \R ^N )$ be its limit. 
Remark that $x ^{\star } (t) \in K_{a ^{\star } } (t )$ holds for every $t \in [0, T]$
since $x ^k = (x^k _1 (\cdot ) , \ldots , x ^k _n (\cdot) , a^k _1 (\cdot ) , \ldots , a ^k _m (\cdot)  ) $
and $a ^ k \to a ^{\star } $ strongly in  $C ( [ 0, T ] ; \R ^N ) $.
Then \eqref{Sec1-2-2} yields 
\begin{equation*}
\frac{d}{dt} x ^k  \rightharpoonup \frac{d}{dt} x^{\star }   ~~~\text{weakly in } L^2(0,T; \R^ N ).
\end{equation*}
Moreover,  \eqref{Sec1-2-1}  and \eqref{Sec1-2-3} imply  that
there exist some $ \eta  ^{\star }  , \xi ^{\star }   \in  L^2(0,T; \R^ N )$
such that
\begin{equation*}
\eta ^k  \rightharpoonup  \eta ^{\star }   ,
  ~~~\xi ^k \rightharpoonup  \xi   ^{\star }  
  ~~~~
\text{weakly in } L^2(0,T; \R^ N ) .
\end{equation*}
These obviously
satisfy the equation $ \frac{d}{dt} x ^{\star } (t) + \eta ^{\star }  (t)+ \xi ^{\star } (t) = h(t) $ for a.e. $t \in (0, T )$.
By the demiclosedness of the subdifferential operators, 
 $ \eta ^{\star }    (t) \in \partial \varphi _{G ,p } (x ^{\star } (t) )$ holds for a.e. $t \in (0, T)$. 
We shall show 
  $ \xi ^{\star }    (t) \in \partial  I _{K_ {a ^{\star } } (t) } (x ^{\star } (t) )$ for a.e. $t \in (0, T)$. 
Fix  
$v \in L^2(0,T ; \R ^N )$ satisfying $v(t) \in K _{a ^{\star }} (t)$ for a.e. $t \in (0, T)$ arbitrarily
and define $v ^k  \in L^2(0,T ; \R ^N ) $ by 
\begin{equation*}
v ^k (\cdot )  :=  ( v _1 (\cdot ) , \ldots , v_n (\cdot ),  a^k _1 (\cdot ) , \ldots, a^k _m (\cdot ) ),
\end{equation*}
where  $v  (\cdot )  =  ( v _1 (\cdot ) , \ldots , v_n (\cdot ),  a ^{\star }_1 (\cdot ) , \ldots, a^{\star } _m (\cdot ) ) $.
Clearly, $ v ^k \to v$ strongly in  $C ( [ 0, T ] ; \R ^N ) $
and $ v ^ k (t) \in K _{a ^k} (t)$ for a.e. $t\in (0, T)$.
Then we have by the definition of the subdifferential
\begin{align*}
\int_{0}^{T}  \xi ^k  (t)  \cdot (  v^ k (t) - x ^k  (t)) dt  
&\leq 
\int_{0}^{T}   I_{K_{a^k } (t)} (v ^k (t)) dt -  
\int_{0}^{T}  I_{K_{a^k }  (t)}(x ^k (t)) dt \\
&= 0 .
\end{align*}
By taking its limit as $\lambda  \to 0 $,
we obtain 
\begin{align*}
\int_{0}^{T}  \xi ^\star   (t)  \cdot (  v (t) - x ^\star   (t)) dt  
\leq 0
=  
\int_{0}^{T}   I_{K_{a ^\star  } (t)} (v  (t)) dt -  
\int_{0}^{T}  I_{K_{a ^\star  }  (t)}(x ^ \star  (t)) dt .
\end{align*}
From the arbitrariness of the choice of $v $, we can derive 
$ \xi ^{\star }    (t) \in \partial  I _{K_ {a ^{\star } } (t) } (x ^{\star } (t) )$ for a.e. $t \in (0, T)$
(see our proof of \cite[Theorem 3.1]{F-I-U}).

Therefore, we can assure that $ x ^{\star } = \Lambda (a ^{\star })$, 
in particular, 
\begin{equation*}
\Lambda (a ^k )
\to 
\Lambda (a ^{\star })
  ~~~\text{strongly  in } C( [ 0,T ] ; \R^ N ) .
\end{equation*}
This immediately implies that $ J (a ^k ) \to J (a ^{\star } )$,
whence follows that $ a ^{\star } $ is a global minimizer of the cost function $J$. 
\end{proof}

We can see the existence of the optimal control of $J $ 
by using methods given in our previous paper \cite{F-I-U}.
Therefore in this paper, 
we mainly focus on the necessary optimality condition,
which is an important tool to find the minimizer of the cost function $J$.
In general, when we derive the necessary optimality condition,
 we need the G\^{a}teaux differentiability of the solution operator $\Lambda $ and cost function $J $.
However, we have shown  in  \cite{F-I-U}  that
the solutions to \eqref{Original-Problem} is 
1/2-H\"{o}lder continuous with respect to the given data $a \in \mathscr{C}$  and  this seems to be optimal.
In order to avoid this difficulty, 
we introduce an approximation operator of $\partial \varphi _{G, p }$,
which is based on the clique expansion of the hypergraph.
Since this approximation operator has some interesting properties,
we shall state them not only in section 2 but also in the Appendix.
In Section 3, 
we consider the optimal control problem for the equation
where $\partial \varphi _{G, p }$ is replaced with the approximation operator.
Note that this approximation problem can be regarded as a hybrid control problem 
consisting of the initial data and the external force.
Finally, we discuss the convergence from  the approximation problem 
to the original problem \eqref{Original-Problem}.


\section{Approximation of Hypergraph Laplacian and its Properties}

\subsection{Definition} 
Let $q \geq  1 $ and define 
\begin{equation}
\label{Sec2-1} 
f _{ e,  q } (x) := \LC  \frac{1}{2}  \sum_{i  , j \in e} |x _ i -x _ j | ^q \RC ^{1/q}
=
\LC  \sum_{ i < j \text{ s.t. } i , j \in e} |x_i  - x _j| ^q \RC ^{1/q} 
\end{equation}
and 
\begin{equation}
\varphi _{G, p ,q } (x) := 
\label{Sec2-2} 
\frac{1}{p}
\sum_{e \in E } w(e) (f_{e ,q } (x ) )^p 
=  
\frac{1}{p}
\sum_{e \in E } w(e) \LC  \frac{1}{2}  \sum_{i, j \in e} |x _ i  -x _ j | ^q \RC ^{p/q} . 
\end{equation}
Obviously, the relationship between $f_ {e, q}$ and the original $f_ e$
is similar to that between the $\ell ^q$-norm and $\ell ^{\infty}$-norm on the finite dimensional space.

When $q= p$, we have 
\begin{equation*}
\varphi _{G, p ,p } (x) = 
\frac{1}{2 p}
\sum_{e \in E } w(e)  \sum_{i, j  \in e} |x_i  -x_j | ^p .
\end{equation*}
This can be rewritten  as $\varphi _{G, p ,p } (x) = 
\frac{1}{2 p}
\sum_{ i , j = 1 }^N  w_{ij }   |x_i  -x_j | ^p  $ with 
\begin{equation*}
w_{ij} = w_{ ji  }
:=\begin{cases}
~~\DS  \sum_{e \in E \text{ s.t. } i , j \in e } w(e)
		~~&~~ \text{ if } \exists e \in E \text{ s.t.  } i ,j  \in e ,\\
~~~~~~~~~~~~\DS 0
		~~&~~ \text{ otherwise} .
\end{cases}
\end{equation*}
Hence we can regard $ \varphi _{G ,p , p} $ as the energy function of the usual graph where 
\begin{itemize}

\item $i$-th and $j$-th vertices are connected by the line segment if there exists at least one $e \in E $ 
which includes them  in the original hypergraph and the weight of this edge 
is equal to the sum of weights of $e \in E $ which includes  $i$-th and $j$-th nodes.

\item otherwise, i.e., if there is no $e \in E $ which includes $i$-th and $j$-th vertices in the original hypergraph, 
we do not draw any edge between $i$-th and $j$-th nodes. 

\end{itemize}

Such a usual  graph is called the {\it clique expansion} of the hypergraph, 
where the complete graph is substituted for 
the hyperedge $e\in E$ (see Figure \ref{Clique}).
By considering that the similar situation occurs for $ p \neq q $,
 the operator $\partial  \varphi _{G , p, q } $ can be regarded 
as an approximation of $\partial  \varphi _{G , p} $  based on  the clique expansion of hypergraphs. 
Hence by the arguments of the convergence 
 $\partial  \varphi _{G , p, q } $ tending to the original hypergraph Laplacian  
 $\partial  \varphi _{G , p, } $, 
we can  justify the appropriateness of  the clique expansion as a suitable approximation of hypergraphs
from the analytical point of view.
\begin{figure}[H]
 \centering
 \includegraphics[keepaspectratio, scale=0.5]
      {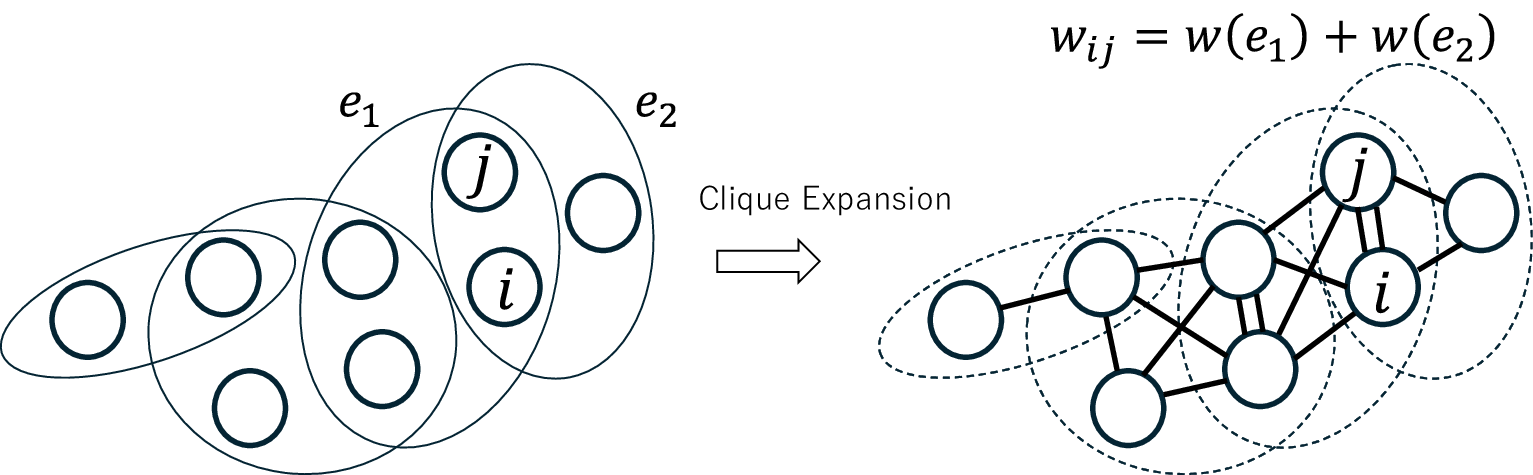}
\caption[Given data]{{\it Clique expansion} is a kind of approximation of  hypergraph 
where the connection by the hyperedge is replaced with the complete graph
(every pair of nodes in $e$ is connected by a line segment).
We can regard $\varphi _{G ,p ,p }$ as a energy of the clique expansion of hypergraph
and 
 the weight $w_ { ij }$ of the line segment connecting $i$-th and $j$-th nodes in the clique expansion
is equal to the sum of the weights of hyperedges including them in the original hypergraph. 
}
 \label{Clique}
\end{figure}

\subsection{Basic Properties} 

It is easy to see that 
by the definition \eqref{Sec2-1} and 
the standard relationship of $\ell ^q $-norms   
\begin{equation}
\label{Sec2-3} 
f_{e, q } (x ) \leq  \sum_{ i < j \text{ s.t. } i , j \in e} | x_i  - x _j |
\leq \# e (\# e -1 ) \| x \| 
\end{equation}
($\# e $ stands for the number of elements of $e$) and then by \eqref{Sec2-2} 
\begin{equation}
\label{Sec2-4}
\varphi _{G , p , q } (x ) \leq \kappa ' _{G ,p } \| x \| ^p ,
\end{equation}
where $\kappa ' _{G ,p } $ is a constant depending only on $ N=n+  m  , E , w  $ and $ p $
and independent of $q$.

Since $ f_{e , q }  $ and  $ \varphi _{ G , p , q } $ are clearly convex and continuous functions, 
we can define the subgradient of them at every $x \in \R ^N $.
Moreover we have for each $l =1, \ldots , N$
\begin{equation}
\label{Formula-Clique} 
\begin{split}
& \partial _{x_l  } \varphi _{G, p ,q  } (x) \\
=& \frac{1}{q}
\sum
 _{ 
e \in E  ~\text{s.t. }
l  \in e
}w(e) 
 \LC
 \frac{1}{2}
 \sum  _{ 
i , j  \in e
}
 | x_ i - x_ j | ^q \RC ^{(p -q ) /q } 
 \partial _{x_ l  }  \LC
 \sum  _{ 
i   \in e }
 | x_ l   - x_ i | ^q \RC  \\
=& 
\sum
 _{ 
e \in E  ~\text{s.t. }
l  \in e
} w(e) 
(f_{e , q } (x) ) ^{ p -q  } 
 \LC \sum  _{ 
i   \in e }
 | x_ l  - x_ i | ^{q-2 }  (x_ l  - x_ i  ) \RC , 
\end{split}
\end{equation}
which is continuous if $ p, q > 1 $.
Hence the subdifferential of  $\varphi _{G ,p, q  }$ with $ p ,q > 1 $
is equal to the total derivative $D \varphi _{G ,p, q  }$ 
since the subgradient of convex function generally coincides with its Fr\'{e}chet derivative if  it is differentiable in usual sense.

As for the boundedness of  $D \varphi _{G ,p, q  }$, there is some constant $\kappa ' _{G , p }$
which is independent of $q $ such that  
\begin{align}
 \| D \varphi _{G, p ,q  } (x)  \| 
\leq & 
\sum
^N  _{ l =1}   
\sum
 _{ 
e \in E  ~\text{s.t. }
l \in e
} w(e) \# e
(f_{e , q } (x) ) ^{ p -1   }  \notag  \\
\leq & 
\kappa ' _{G ,p }  \| x\| ^{p-1  } ~~~~~~~~~~~~~\forall x \in \R ^N 
\label{Sec2-5} 
\end{align}
by
\begin{equation*}
\sum_{i \in e} |x _ k   -x _ i | ^{q-1} 
\leq  (\# e ) ^{1/q } ( f_{e , q } (x) ) ^{q -1 } 
\leq  \# e  ( f_{e , q }  (x )) ^{q -1 } . 
\end{equation*}
By repeating this calculation inductively, we obtain the following fact: 
\begin{Le}
\label{Clique-Smooth} Let  $  p, q >  k $ with some $k  \in \N $.
Then  $\varphi _{G , p, q } : \R ^N \to \R $ is a function of class  $C ^k$
and there exists a constant $ \kappa ' _{ G ,p , q , k }$ which depends on 
$ N=n+  m  , E , w  $ and $ p ,q  , k $ such that 
\begin{equation}
\label{Sec2-6} 
\| D ^ k \varphi _{ G ,p , q } (x) \| \leq  \kappa ' _{ G ,p , q , k } \| x \| ^{p - k }
~~~~~\forall  x \in \R ^N  . 
\end{equation}
Especially, if $ p ,q >1 $, the subdifferential of 
$\varphi _{G , p, q } : \R ^N \to \R $ coincides with $D \varphi _{G , p, q } $ 
and $\kappa ' _{ G ,p , q , 1 } $  is independent of $q $. 
\end{Le}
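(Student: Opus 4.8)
The plan is to reduce everything to a single-edge building block, establish smoothness of that block by combining a scalar fact about $|t|^q$ with a careful passage across the singular set, and then recover the growth bound from homogeneity. Since $\varphi_{G,p,q}=\frac1p\sum_{e\in E}w(e)\,(f_{e,q})^p$ is a \emph{finite} sum, it suffices to prove that each $F_e:=(f_{e,q})^p$ is of class $C^k$ and obeys $\|D^mF_e(x)\|\le C_m\|x\|^{p-m}$ for $0\le m\le k$; summing over $e$ then gives \eqref{Sec2-6}. The scalar input I would isolate first is that $g_q(t):=|t|^q$ lies in $C^k(\R)$ whenever $q>k$: for $t\ne 0$ one has $g_q^{(m)}(t)=q(q-1)\cdots(q-m+1)\,|t|^{q-m}$ up to a sign factor, so $|g_q^{(m)}(t)|\le C_{q,m}|t|^{q-m}\to 0$ as $t\to 0$ for every $m\le k$, and continuity at the origin follows from the elementary one-variable criterion that a continuous function which is $C^1$ off one point with continuously extendable derivative is $C^1$.

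Next I would treat smoothness of $F_e$ away from the singular set $S_e:=\bigcup_{i<j,\,i,j\in e}\{x:\,x_i=x_j\}$. On $\R^N\setminus S_e$ every difference $x_i-x_j$ is nonzero and $f_{e,q}(x)>0$, so $F_e$ is a composition of the $C^\infty$ maps $t\mapsto|t|^q$ (for $t\ne0$) and $s\mapsto s^{p/q}$ (for $s>0$) with linear maps, hence $F_e\in C^\infty(\R^N\setminus S_e)$. Iterating the differentiation already performed in \eqref{Formula-Clique}, each partial derivative of order $m$ is a finite sum of terms, every term a product of a power $(f_{e,q}(x))^{\alpha}$ arising from the outer exponent and inner factors $|x_i-x_j|^{q-s}$ (up to a sign) arising from differentiating the $|t|^q$-pieces; by construction every such term is positively homogeneous of degree $p-m$.

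The key step, and the one I expect to be the \emph{main obstacle}, is to push these formulas across $S_e$ and to verify that the formally computed derivatives are the genuine ones. Here I would use the pointwise inequality $|x_i-x_j|\le f_{e,q}(x)$ (immediate from $(f_{e,q})^q=\frac12\sum|x_i-x_j|^q$) together with the hypotheses $p,q>k$: the condition $q>k$ forces every inner exponent $q-s$ with $s\le m\le k$ to be positive, so no factor blows up, while degree-$(p-m)$ homogeneity then yields $\|D^mF_e(x)\|\le C_m(f_{e,q}(x))^{p-m}\le C_m'\|x\|^{p-m}$ on $\R^N\setminus S_e$, the last inequality via \eqref{Sec2-3} and $p-m>0$. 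Since $p-m>0$, each order-$m$ derivative therefore tends to a finite continuous limit as $x\to S_e$ (the inner factors carrying a vanishing difference tend to $0$ near a partial diagonal, and the whole bound tends to $0$ on the full diagonal where $f_{e,q}=0$). To conclude that these limits are the actual derivatives I would induct on $m$: assuming $D^{m-1}F_e$ continuous, I restrict to each coordinate line, which meets the hyperplanes of $S_e$ not containing it in only finitely many points and restricts smoothly to those it does contain, and apply the one-dimensional $C^1$-extension criterion at those finitely many points. This is the delicate point precisely because it is where the singularity of $|t|^q$ (absorbed by $q>k$) and that of the outer power at the full diagonal (absorbed by $p>k$) must both be controlled simultaneously.

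Finally I would deduce the growth bound and the special case. Once $\varphi_{G,p,q}\in C^k(\R^N)$ is known, I would obtain \eqref{Sec2-6} cleanly from homogeneity rather than from the termwise estimates: from $\varphi_{G,p,q}(\lambda x)=\lambda^p\varphi_{G,p,q}(x)$ for $\lambda>0$, differentiating $k$ times in $x$ gives $D^k\varphi_{G,p,q}(\lambda x)=\lambda^{p-k}D^k\varphi_{G,p,q}(x)$, so that $\|D^k\varphi_{G,p,q}(x)\|\le\kappa'_{G,p,q,k}\|x\|^{p-k}$ with $\kappa'_{G,p,q,k}:=\sup_{\|u\|=1}\|D^k\varphi_{G,p,q}(u)\|$, which is finite by continuity of $D^k\varphi_{G,p,q}$ and compactness of the unit sphere. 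For the special case $p,q>1$ with $k=1$, the subdifferential coincides with $D\varphi_{G,p,q}$ since a convex function differentiable in the usual sense has subdifferential equal to its derivative, and the $q$-independent bound is exactly \eqref{Sec2-5}, so one may take $\kappa'_{G,p,q,1}=\kappa'_{G,p}$ independent of $q$.
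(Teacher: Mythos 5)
Your overall route is the paper's own route: the paper simply computes the first derivative \eqref{Formula-Clique}, bounds it as in \eqref{Sec2-5}, invokes the convexity fact for the subdifferential statement, and says ``by repeating this calculation inductively''; your termwise differentiation, the bound via $|x_i-x_j|\le f_{e,q}(x)$ and \eqref{Sec2-3}, and the $k=1$ case are all the same, and your homogeneity argument for \eqref{Sec2-6} (differentiating $\varphi_{G,p,q}(\lambda x)=\lambda^p\varphi_{G,p,q}(x)$ and taking the supremum over the unit sphere) is a clean, correct replacement for the paper's termwise estimate. However, there is a genuine gap at exactly the step you flag as the main obstacle, and as written it does not close.

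The problem is the verification that the formally computed derivatives are the genuine ones across $S_e$ by restriction to coordinate lines. If a coordinate line is \emph{contained} in one of the hyperplanes $\{x_i=x_j\}$ constituting $S_e$ (take a base point with $x_i=x_j$ and a direction $e_l$ with $l\notin\{i,j\}$), then every point of that line lies in $S_e$, so the one-dimensional $C^1$-extension criterion cannot be applied: its hypothesis --- differentiability of the restriction off finitely many points --- is not available, because at this stage smoothness of $F_e=(f_{e,q})^p$ is known only off $S_e$. The clause ``restricts smoothly to those it does contain'' is precisely the assertion that needs proof, and since your induction runs only over the derivative order $m$ (not over the depth of the diagonal strata or the dimension), the argument is circular at these points. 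The repair is available from your own first step: since $q>k$, the scalar fact gives that $G:=(f_{e,q})^q=\tfrac12\sum_{i,j\in e}|x_i-x_j|^q$ is of class $C^k$ on \emph{all} of $\R^N$ (a finite sum of $C^k$ functions of linear forms), so $F_e=G^{p/q}$ is $C^k$ on the open set $\{G>0\}$ by the chain rule; this disposes of every partial diagonal at once, and the only genuine singular set is the full diagonal $\{f_{e,q}=0\}$, which is a linear subspace. There your estimate $\|D^mF_e(x)\|\le C_m\,(f_{e,q}(x))^{p-m}$ with $p-m>0$ shows the derivatives extend continuously by zero, and since a segment joining a point of the subspace to a nearby point outside it meets the subspace only at that endpoint, a mean-value (Taylor) argument, by induction on $m\le k$, identifies these zero extensions as the genuine derivatives. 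With this substitution for the line-restriction step, your proof is complete.
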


We next check the convergence of $\varphi _{G , p , q}$ tending to $\varphi _{G , p }$ as $q \to \infty $.
By the relationship of $\ell ^q$-norms of the finite dimensional space (i.e., by the H\"{o}lder inequality), 
we get for any $q _1 < q _2 $
\begin{equation*}
f _{e ,q _2  } (x) \leq f _{e ,q _1  } (x) \leq  \LC \frac{\# e (\# e -1 )}{2 }  \RC ^{ \frac{1}{q_ 1} -   \frac{1}{q_ 2}   } 
f _{e ,q _2  } (x) ~~~~~~~\forall x \in \R ^N . 
\end{equation*}
Hence summing them with respect to $e \in E$, we obtain 
\begin{equation*}
\varphi _{G ,p , q _2 } (x) \leq \varphi _{G ,p , q _ 1 } (x) \leq 
  \nu _E  ^{ \frac{p}{q_ 1} -   \frac{p}{q_ 2}   }   \varphi _{G ,p , q _ 2 } (x) 
  ~~~~~~~
\forall x \in \R ^N ,
\end{equation*}
where $\nu _E := \max _{e \in E} \LC \frac{\# e (\# e -1 )}{2 }  \RC  $.
In particular, when $q_1 = q < \infty $ and $q _2 = \infty $, we have 
\begin{equation}
\varphi _{G ,p } (x) \leq \varphi _{G ,p , q  } (x) \leq 
  \nu _E  ^{ \frac{p}{q }    }  \varphi _{G ,p  } (x) 
  ~~~~~~~
\forall x \in \R ^N .
\label{Sec2-6-1} 
\end{equation}
Therefore the following fact can be assured:
\begin{Le}
\label{Clique-Conver}
Let $ p \in [1 , \infty ) $ and $ 1 \leq q _1 < q_ 2 \leq \infty $.
Then 
\begin{align*}
\LZ  \varphi _{G,p, q_2 } (x)
- 
\varphi _{G,p,q_1  } (x) \RZ 
& \leq ( \nu  _E ^{ \frac{p}{q_ 1} -   \frac{p}{q_ 2}   }  - 1  ) 
\varphi _{G,p , q _2 } (x) \\
& \leq ( \nu  ^{ \frac{p}{q_ 1}  }  _E - \nu  ^{ \frac{p}{q_ 2}  } _E  )  \varphi _{G,p  } (x) 
\end{align*}
holds for any $x \in \R ^N $. Especially, 
if $ q  _1 = q < \infty $ and $ q _2 = \infty $, 
\begin{equation}
\label{Sec2-7} 
\LZ  \varphi _{G,p } (x)
- 
\varphi _{G,p,q  } (x) \RZ 
\leq ( \nu  ^{  \frac{p}{q }   }  _E - 1 )  
\varphi _{G,p  } (x) ~~~~\forall  x \in \R ^N . 
\end{equation}
\end{Le}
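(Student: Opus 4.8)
The plan is to deduce the statement entirely from the two estimates already established immediately before it: the unnumbered two-sided chain $\varphi_{G,p,q_2}(x) \le \varphi_{G,p,q_1}(x) \le \nu_E^{\,p/q_1 - p/q_2}\varphi_{G,p,q_2}(x)$ and its specialization \eqref{Sec2-6-1}. Nothing genuinely new needs to be proved; the task is to repackage these monotonicity and comparison bounds as a bound on the difference $\varphi_{G,p,q_2}(x) - \varphi_{G,p,q_1}(x)$.

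First I would use the monotonicity half $\varphi_{G,p,q_2}(x) \le \varphi_{G,p,q_1}(x)$, valid since $q_1 < q_2$, to remove the absolute value: $|\varphi_{G,p,q_2}(x) - \varphi_{G,p,q_1}(x)| = \varphi_{G,p,q_1}(x) - \varphi_{G,p,q_2}(x) \ge 0$. The comparison half $\varphi_{G,p,q_1}(x) \le \nu_E^{\,p/q_1 - p/q_2}\varphi_{G,p,q_2}(x)$ then gives at once $\varphi_{G,p,q_1}(x) - \varphi_{G,p,q_2}(x) \le (\nu_E^{\,p/q_1 - p/q_2} - 1)\varphi_{G,p,q_2}(x)$, which is the first asserted inequality.

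For the second inequality I would first note that $\nu_E \ge 1$, because every $e \in E$ satisfies $\#e \ge 2$ and hence $\frac{\#e(\#e-1)}{2} \ge 1$; since also $p/q_1 > p/q_2$, the scalar factor $\nu_E^{\,p/q_1 - p/q_2} - 1$ is nonnegative. Multiplying the upper bound $\varphi_{G,p,q_2}(x) \le \nu_E^{\,p/q_2}\varphi_{G,p}(x)$, which is the right-hand half of \eqref{Sec2-6-1} taken with $q = q_2$, by this nonnegative factor and simplifying the exponents through $(\nu_E^{\,p/q_1 - p/q_2} - 1)\nu_E^{\,p/q_2} = \nu_E^{\,p/q_1} - \nu_E^{\,p/q_2}$ yields the second inequality. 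The special case $q_1 = q < \infty$, $q_2 = \infty$ then follows by setting $\nu_E^{\,p/q_2} = \nu_E^{0} = 1$ and identifying $\varphi_{G,p,\infty}$ with $\varphi_{G,p}$, which reproduces \eqref{Sec2-7}.

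I do not anticipate any real obstacle: the inequalities in \eqref{Sec2-6-1} and the two-sided chain preceding the statement carry the entire argument, and the only points demanding care are keeping track of the direction of each inequality and verifying $\nu_E \ge 1$, so that multiplication by $\nu_E^{\,p/q_1 - p/q_2} - 1$ preserves the order. The remaining steps are a short sequence of algebraic manipulations on already-proved bounds.
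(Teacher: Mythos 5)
Your proposal is correct and follows exactly the paper's route: the paper derives the lemma directly from the two-sided chain $\varphi_{G,p,q_2}(x)\le\varphi_{G,p,q_1}(x)\le\nu_E^{p/q_1-p/q_2}\varphi_{G,p,q_2}(x)$ and its specialization \eqref{Sec2-6-1}, with the same algebraic repackaging you perform (the paper simply states ``Therefore the following fact can be assured'' after the chain). Your explicit checks that $\nu_E\ge 1$ and that the factor $\nu_E^{p/q_1-p/q_2}-1$ is nonnegative are the only details the paper leaves implicit, and they are handled correctly.
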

From \eqref{Sec2-7} and  \eqref{bound}, 
we can derive the uniform convergence of $\varphi _{G, p , q } $ to $\varphi _{G ,p }  $ as $q \to \infty $ 
on any compact sets of $\R ^N $. 
Particularly, 
the sequence of functionals $\{ \varphi _{G, p , q }  \} _{q  > 1 } $ converges 
to $\varphi _{G ,p }  $ in the sense of Mosco
(see \cite{Mosco1, Mosco2}).

\subsection{Approximated Equations and Their Convergence} 

We shall consider the following approximation problem of \eqref{Original-Problem}
whose main term is replaced by the clique expansion of hypergraph Laplacian:
\begin{equation}
\begin{cases}
~~\DS x'(t) + D\varphi _{G, p , q } (x (t) ) + \frac{1}{\lambda } (Hx(t) - a(t)  ) = h(t) , ~~~t \in (0, T) , \\
~~\DS x(0) = x_ 0 , 
\end{cases}
\label{Approx-Prob}
\tag{${\rm P } ^  { q , \lambda  } $} 
\end{equation}
where $H : \R ^N \to \R ^N $ is the linear operator defined by  
\begin{equation}
\label{H} 
H z
:=
 (0 ,\ldots , 0 , z_{n+1} ,\ldots ,z_{n+m } ),
 ~~~\text{where}~~z=  (z _1  ,\ldots  , z_{n+m }  ) . 
\end{equation}
Note that the third term of L.H.S. of \eqref{Approx-Prob}
is coincides with the Yosida approximation of $\partial I _{K _{a} (t )}$.
In fact, the projection onto the closed convex set  $ K_a (t) \subset \R ^N $
can be written as 
\begin{equation*}
\text{Proj} _{K_a (t)} z = (z_1 ,\ldots , z_n , a_1 (t) ,\ldots , a_m ( t)), 
\end{equation*}
where $z=  (z _1  ,\ldots , z_n , z_{n+1} ,\ldots , z_{n+m }  )$,
and then 
the Moreau-Yosida regularization of $I _{K _{a} (t )}$ and  
the Yosida approximation of $ \partial  I _{K _{a} (t )}$  are  
\begin{align*}
\LC I _{K_a (t)} \RC _{\lambda } (z) &:= \inf _{ y\in \R ^V } \LD  \frac{1}{2\lambda } \| z -y \| ^2 + I_{K_a (t)} (y)  \RD
			= \frac{1}{2\lambda }  \| z - \text{Proj} _{K_a (t)} z \| ^2  , \\[3mm]
\LC \partial I _{K_a (t)} \RC _{\lambda } (z) &= \partial    \LC I _{K_a (t)} \RC _{\lambda } (z) = 
			\frac{1}{\lambda } \LC z - \text{Proj} _{K_a (t)} z \RC \\
	&= \frac{1}{\lambda } \LC (z_1 ,\ldots , z_ n , z_{n+1} , \ldots, z_{n+m}) -  (z_1 ,\ldots , z_ n , a_{1} (t) , \ldots, a_{m} (t)) \RC \\
	&= \frac{1}{\lambda } \LC H z  -  a(t)  \RC 	.
\end{align*}
Since \eqref{Approx-Prob} is governed by the continuous main term 
$D\varphi _{G, p , q } (x (t) ) + \frac{1}{\lambda } Hx(t) $
and the external force $  h( t ) + \frac{1}{\lambda } a(t) $,
we can easily assure that there exists a unique solution to   \eqref{Approx-Prob}  
belonging to $W ^{1, 2 } (0, T ; \R ^N )$
for every $ x _ 0 \in \R ^N $, $a \in \mathscr{C}$, and $h \in L^2  ( 0 ,T ;\R ^N )$.
In this subsection, we show that the solution to this approximation problem \eqref{Approx-Prob}  
tends to that of the original problem \eqref{Original-Problem} as $\lambda  \to 0 $ and $q \to \infty $ as follows:
\begin{Th}
\label{Convergence-to-original-sol} 
Let $ p >1  $ and the  sequences $ \{ q ^ i \} _{i \in \N } \subset [1 , \infty ) $,  
 $ \{ \lambda  ^  i \} _{i \in \N } \subset ( 0 , \infty ) $ satisfy  
$q ^ i \to \infty $, $\lambda ^ i \to 0$ as $i \to \infty $, respectively.
Suppose that 
$ \{ a ^ i \} _{ i \in \N } \subset \mathscr{C} $ and 
 $ \{ h ^ i  \} _{i \in \N } \subset L ^2 ( 0 , T ; \R ^N  ) $ fulfill
\begin{equation}
\label{Assum-Th-S2-1} 
\begin{split}
a ^i \to a ~~& \text{ weakly in  }W ^{1,2} (0, T ; \R ^N ) ,\\
~~& \text{ strongly in   } C ( [ 0, T ] ; \R ^N ) ,\\
h ^i  \to h ~~& \text{ weakly in  } L ^ 2  (0, T ; \R ^N )  ,
\end{split}
\end{equation}
with some 
$ a \in  \mathscr{C} $ and 
 $ h \in  L ^2 ( 0 , T ; \R ^N  ) $.
Moreover,  assume that 
the  sequence of initial data
 $ \{ x ^ i _ 0 \} _{j \in \N } \subset \R ^N $ converging to $x_ 0 $
satisfies 
 $  x ^ i _ 0 \in K _{a ^i  } (0) $ for every $i \in \N $ and  $x_ 0 \in  K _{a  } (0) $.
Then the solution $x ^{ i} $ of
 \eqref{Approx-Prob} 
with $ ( q , \lambda , a , h , x_ 0 ) =   ( q ^i , \lambda ^i  , a ^i  , h ^i , x ^i _ 0 ) $
converges to 
the solution $x$ of \eqref{Original-Problem} with 
$ ( a , h , x_ 0 ) $ given above in the following sense:
\begin{equation}
\label{Res-Th-S2-1} 
\begin{split}
x ^{i} \to x  ~~& \text{ weakly in  }W ^{1,2} (0, T ; \R ^N ) ,\\
~~& \text{ strongly in   } C ( [ 0, T ] ; \R ^N ) .
\end{split}
\end{equation}
\end{Th}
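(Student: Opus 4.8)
The plan is to reproduce the compactness argument of the proof of Proposition~\ref{Exi-OC-Existence}, now carrying the two approximation parameters $q^i$ and $\lambda^i$ simultaneously, and to replace the demiclosedness of $\partial\varphi_{G,p}$ by the Mosco convergence recorded after Lemma~\ref{Clique-Conver}. Write $\eta^i := D\varphi_{G,p,q^i}(x^i)$ and $\xi^i := \frac{1}{\lambda^i}(Hx^i-a^i)$, so that the equation reads $x^{i\prime}+\eta^i+\xi^i=h^i$ a.e. Since $a^i(t)\in K_{a^i}(t)$ and $Ha^i=a^i$, the operator $H$ being a symmetric projection gives the identity $\xi^i(t)\cdot(x^i(t)-a^i(t))=\lambda^i\|\xi^i(t)\|^2=\frac{1}{\lambda^i}\|Hx^i(t)-a^i(t)\|^2\ge0$, which plays here the role of the relation $\xi^k\cdot(x^k-a^k)=0$ used in Proposition~\ref{Exi-OC-Existence}.

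First I would establish a priori bounds uniform in $i$. Testing with $x^i-a^i$, using the convexity of $\varphi_{G,p,q^i}$ and the growth estimate \eqref{Sec2-4} (whose constant is independent of $q$) to control $\sup_t\varphi_{G,p,q^i}(a^i(t))$, the Gronwall inequality yields $\sup_t\|x^i(t)\|\le C_1$; then \eqref{Sec2-5} gives $\sup_t\|\eta^i(t)\|\le C_1$, again with a $q$-independent constant. Testing with $\frac{d}{dt}(x^i-a^i)$ and using $\xi^i\cdot\frac{d}{dt}(x^i-a^i)=\frac{1}{2\lambda^i}\frac{d}{dt}\|Hx^i-a^i\|^2$ produces, after integration, $\int_0^T\|x^{i\prime}\|^2\,dt\le C_1$ and, crucially, $\frac{1}{2\lambda^i}\|Hx^i(t)-a^i(t)\|^2\le C_1$ for every $t$; here the boundary term at $t=0$ vanishes because $x^i_0\in K_{a^i}(0)$ forces $Hx^i_0=a^i(0)$. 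In particular $\sup_t\|Hx^i(t)-a^i(t)\|\le\sqrt{2\lambda^i C_1}\to0$, i.e. the constraint is recovered asymptotically, and $\int_0^T\|\xi^i\|^2\,dt\le C_1$ follows from $\xi^i=h^i-x^{i\prime}-\eta^i$. Ascoli--Arzelà then extracts a subsequence with $x^i\to x$ in $C([0,T];\R^N)$, $x^{i\prime}\rightharpoonup x'$, $\eta^i\rightharpoonup\eta$ and $\xi^i\rightharpoonup\xi$ weakly in $L^2(0,T;\R^N)$, and the limit satisfies $x'+\eta+\xi=h$ a.e.

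The main obstacle is the identification $\eta(t)\in\partial\varphi_{G,p}(x(t))$, since the demiclosedness argument of Proposition~\ref{Exi-OC-Existence} no longer applies when the operator itself varies with $i$. Here I would use Lemma~\ref{Clique-Conver}: as $x^i\to x$ uniformly, all $x^i(t)$ lie in a fixed compact set, on which $\varphi_{G,p,q^i}\to\varphi_{G,p}$ uniformly by \eqref{Sec2-7}, whence $\int_0^T\varphi_{G,p,q^i}(x^i)\,dt\to\int_0^T\varphi_{G,p}(x)\,dt$. Starting from the subgradient inequality $\int_0^T\eta^i\cdot(v-x^i)\,dt\le\int_0^T[\varphi_{G,p,q^i}(v)-\varphi_{G,p,q^i}(x^i)]\,dt$ valid for every $v\in L^2(0,T;\R^N)$, I would pass to the limit: the left side converges by weak--strong pairing, while on the right side the term in $v$ is handled via $\varphi_{G,p,q^i}(v)\le\nu_E^{\,p/q^i}\varphi_{G,p}(v)\to\varphi_{G,p}(v)$ together with the reverse Fatou (dominated convergence) lemma. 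This yields $\int_0^T\eta\cdot(v-x)\,dt\le\int_0^T[\varphi_{G,p}(v)-\varphi_{G,p}(x)]\,dt$ for all $v$, which is exactly $\eta(t)\in\partial\varphi_{G,p}(x(t))$ a.e.; this is the step where the Mosco convergence is essential and where the joint control of the two limits $q^i\to\infty$ and $\lambda^i\to0$ really matters.

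Finally, the asymptotic constraint bound together with $x^i\to x$ and $a^i\to a$ in $C$ forces $Hx(t)=a(t)$, i.e. $x(t)\in K_a(t)$ for every $t$. To identify $\xi$, I would argue as in Proposition~\ref{Exi-OC-Existence}: since $\xi^i=(\partial I_{K_{a^i}(t)})_{\lambda^i}(x^i)\in\partial I_{K_{a^i}(t)}(\Proj_{K_{a^i}(t)}x^i)$, one gets $\xi^i\cdot(v^i-x^i)\le-\lambda^i\|\xi^i\|^2\le0$ for the comparison functions $v^i=(v_1,\dots,v_n,a^i_1,\dots,a^i_m)$ built from any $v\in L^2(0,T;\R^N)$ with $v(t)\in K_a(t)$; passing to the limit (weak $\xi^i$, strong $v^i\to v$ and $x^i\to x$) yields $\int_0^T\xi\cdot(v-x)\,dt\le0$, hence $\xi(t)\in\partial I_{K_a(t)}(x(t))$ a.e. Combined with $x(0)=\lim x^i(0)=x_0$, this shows that $x$ solves \eqref{Original-Problem}; as \eqref{Original-Problem} has a unique solution by Proposition~\ref{Well-posedness-Original}, the limit is independent of the extracted subsequence, so the whole sequence converges and \eqref{Res-Th-S2-1} holds.
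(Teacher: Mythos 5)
Your proposal is correct and follows essentially the same route as the paper: the same two a priori estimates (testing by $x^i-a^i$ and by $(x^i-a^i)'$ with the vanishing initial term from $x^i_0\in K_{a^i}(0)$), the same compactness and weak-limit extraction, the identification of $\eta$ via the subgradient inequality plus Lemma~\ref{Clique-Conver} and dominated convergence, the identification of $\xi$ via comparison functions $v^i$ built from $a^i$, and whole-sequence convergence by uniqueness. The only cosmetic difference is that you derive $\int_0^T\xi^i\cdot(v^i-x^i)\,dt\le 0$ from the inclusion $(\partial I_{K_{a^i}(t)})_{\lambda^i}(x^i)\in\partial I_{K_{a^i}(t)}(\Proj_{K_{a^i}(t)}x^i)$, whereas the paper uses the equivalent Moreau--Yosida inequality $\xi^i\cdot(z^i-x^i)\le (I_{K_{a^i}(t)})_{\lambda^i}(z^i)-(I_{K_{a^i}(t)})_{\lambda^i}(x^i)\le 0$; both are standard and interchangeable.
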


\begin{proof}
Subtracting $ ( a ^i ) '$ from both sides of \eqref{Approx-Prob}, we get 
 \begin{equation}
\begin{split}
 &( x ^i (t) - a ^i  (t) ) ' +D \varphi _{ G, p , q ^i   } (x ^ i   (t) ) \\
 &~~~~~~~~
 +  \frac{1}{\lambda ^ i  } \LC H x ^i  (t) -  a ^i   (t)  \RC  = h ^i  (t)  -  (  a ^i  (t) ) '   .
\end{split}
\label{Sec2-8} 
\end{equation}
Multiplying this equation by 
$x ^i - a^ i $, we obtain  
\begin{align*}
&\frac{1}{2} \frac{d}{dt} \| x ^i (t) - a^ i  (t)  \| ^2 
 + \varphi _{ G, p , q ^i } (x ^i  (t) )  - \varphi _{ G, p , q ^i  } (a ^i  (t) )  \\
&~~~~+ \frac{1}{ \lambda ^i }  \|  H x ^i  (t) -  a ^i   (t)  
 \| ^2
\leq	\LN  \frac{d}{dt} a ^ i  (t) - h ^i (t) \RN \| x ^i  (t) - a^ i  (t)  \|  
\end{align*}
We here use the fact that the inner product of
\begin{equation*}
H x ^i  (t) -  a ^i   (t) 
= 
 (0 ,\ldots , 0 , x ^ i  _{n+1} (t) - a^i  _1 (t) , \ldots , x^i  _{n+m} (t) - a^ i  _m (t) )
\end{equation*}
and 
\begin{equation*}
x ^i  (t) - a^ i (t)
= 
 (x ^ i  _1 (t) ,\ldots , x ^ i  _n  (t)  , x ^ i  _{n+1} (t) - a^i  _1 (t) , \ldots , x^i  _{n+m} (t) - a^ i  _m (t) ) 
\end{equation*}
is equal to 
\begin{align*}
( H x ^i  (t) -  a ^i   (t)  ) \cdot ( x ^i  (t) - a^ i (t) ) 
&= \|  H x ^i  (t) -  a ^i   (t)   \| ^2 \\
&  =
  \sum_{j=1}^{m} |  x^i  _{n+j} (t) - a^ i _j  (t) | ^2  .
\end{align*}
Since $\{ a ^ i \} _{i \in \N }$ is uniformly bounded with respect to the $W ^{1 ,2 } (0, T ;  \R ^N ) $-norm
and the constant in  \eqref{Sec2-4} is independent of $q$, 
we have  
\begin{equation*}
\sup _{ 0 \leq t \leq T} 
\varphi _{ G, p , q ^i  } (a ^i  (t) )  \leq C_ 2 ,
\end{equation*}
where and henceforth, $C_2 $ denotes a general constant 
independent of the index $i$. 
Then the Gronwall inequality yields 
\begin{equation}
\label{Apriori-01}
 \sup _i  \LC  \sup _{ 0 \leq t \leq T} \| x ^ i (t)\| \RC \leq C_ 2 ,
\end{equation}
which also implies that 
\begin{equation}
\label{Apriori-02}
 \sup _i  \LC  \sup _{ 0 \leq t \leq T} \| D \varphi _{G , p , q ^ i }  ( x ^ i (t) ) \| 
+
\sup _{ 0 \leq t \leq T}  \varphi _{G , p , q ^ i } (  x ^ i (t) ) 
\RC \leq C_ 2 ,
\end{equation}
(recall \eqref{Sec2-4} and \eqref{Sec2-6} with $k =1 $).
Next testing \eqref{Sec2-8} by  $( x ^ i - a^ i ) '   $, 
we get 
\begin{align*}
& \frac{1}{2} \LN  ( x ^i - a^ i ) '   \RN  ^2 
		+\frac{d}{dt}  \varphi _{ G, p , q ^i  } (x ^i  (t) )  
	+ 
\frac{1}{ 2 \lambda ^i }  \frac{d}{dt} \|  H x ^i  (t) -  a ^i   (t)   \| ^2  \\
\leq &		
\frac{1}{2} \LN  \frac{d}{dt} a ^i (t) - h ^i (t) \RN ^2 
+\LN  D \varphi _{ G, p , q ^i  } (x ^i (t) ) \RN \LN  \frac{d}{dt} a ^i (t)  \RN   ,
\end{align*}
which leads to 
\begin{align*}
&  \int_{0}^{T} \LN \frac{d}{dt}  x ^i (t)   \RN  ^2 dt 
  +
  \frac{1}{  \lambda ^i }   \sup _{ 0 \leq t \leq T }  \|  H x ^i  (t) -  a ^i   (t)   \| ^2   \\ 
& ~~\leq 
C_2 \LC 1  +  \frac{1}{  \lambda ^ i }  \|  H x ^i  (0) -  a ^i   (0)   \| ^2  \RC
= 
C_2 \LC 1  +  \frac{1}{  \lambda ^ i }   \sum_{j =1 }^{m} |  x^i  _{n+j} (0) - a^{i} _j (0) | ^2  \RC .
\end{align*}
Since we impose 
$x^i (0) \in K_{a ^i} (0) $ on the initial data, 
the second term of the R.H.S. of the above is zero. 
Hence we can derive 
\begin{equation}
\label{Apriori-03}
\int_{0}^{T} \LN \frac{d}{dt}  x^i (t) \RN ^2 dt  + 
  \frac{1}{  \lambda ^i }    \sup _{ 0 \leq t \leq T } \|  H x ^i  (t) -  a ^i   (t)   \| ^2  
\leq C_2 
 \end{equation}
and by the equation, 
\begin{equation}
\label{Apriori-04}
\int_{0}^{T} 
\LN 
 ( \partial I _{K_{a ^ i  } (t) } ) _{\lambda ^i } (x ^i  (t ) )  \RN ^2 dt  
=
\int_{0}^{T} 
\LN 
\frac{1}{\lambda ^i  } \LC H x ^i (t) -  a ^i  (t)  \RC \RN ^2 dt  \leq C_2 .
\end{equation}

By using these uniform boundedness, we discuss the convergence of solution $\{ x ^i \} _{ i \in \N }$. 
From \eqref{Apriori-01} and \eqref{Apriori-03}, there exists a subsequence 
 (we omit relabeling since the whole sequence also converges as will be seen later)
 and some $x \in W ^{1, 2 } (0,  T ; \R ^N )$ satisfying \eqref{Res-Th-S2-1}.
By \eqref{Apriori-03},  we  have 
\begin{equation*}
\lim_{i \to \infty } \LC \sup _{ 0 \leq t \leq T } \|  H x ^i  (t) -  a ^i   (t)   \| \RC  \to  0 ,
\end{equation*}
that is, 
\begin{equation*}
H x(t) = a(t)~~~\Leftrightarrow ~~~
x_{ n+ j  } (t) = a_  j  (t) ~~~\forall j  =1 ,\ldots ,m ,~~\forall t\in [0 ,T ] , 
\end{equation*}
which is equivalent to $ x (t) \in K _a (t) $ for every $ t \in [0, T ]$. 
According to \eqref{Apriori-02} and \eqref{Apriori-04}, there exist 
$ \eta , \xi \in L^2 (0, T ;\R ^N )$ such that
\begin{equation*}
 D \varphi _{G ,p , q ^ i  } (x^ i  ) \to  \eta ,
 ~~~~
  ( \partial I _{K_{a ^ i  } (t) } ) _{\lambda ^i } (x ^  i  )  \to  \xi
  ~~~~~~~
  \text{ weakly in } L^2(0,T ;\R ^N ) . 
\end{equation*}
Taking the limit of the equation \eqref{Approx-Prob} as $ i \to \infty $, 
we obtain 
\begin{equation*}
\begin{cases}
~~\DS x'(t) + \eta (t)  + \xi (t)  = h(t),  \\
~~\DS x(0) = x_ 0 \in K _ a (0) , 
\end{cases}
\end{equation*}
We here check that $ \eta $ and $ \xi  $ are the sections satisfying the original equation \eqref{Original-Problem},
i.e., 
$\eta (t) \in \partial \varphi _{G,p } (x (t))$
and 
$\xi  (t) \in \partial I  _{ K_a (t)  } (x (t))$ hold for a.e. $t\in (0, T )$.
Fix $y \in L^2 (0,T ; \R ^N )  $ such that  $ \int_{0}^{T} \varphi _{G,p } (y (t) ) dt <\infty  $ arbitrarily
(note that 
$\int_{0}^{T} \varphi _{G,p , q ^ i   } (y (t) ) dt <\infty  $ by \eqref{Sec2-6-1}). 
Since $ D \varphi _{G ,p , q ^ i  } $ coincides with the subdifferential  of the convex function $\varphi _{G ,p , q ^ i  } $,
we infer that 
\begin{align*}
&\int_{0}^{T} 
\LC  h ^i (t)  - \frac{d}{dt} x ^i  (t) -  \frac{1}{\lambda ^i   } \LC H x ^i  (t) -  a ^i  (t)  \RC \RC \cdot (y  (t)- x^i  (t))
dt \\
\leq &\int_{0}^{T} \varphi _{ G, p , q ^i   } (y   (t) ) dt  
-
\int_{0}^{T} \varphi _{ G, p , q ^i  } (x ^i   (t) ) dt  .
\end{align*}
From  \eqref{Sec2-6-1} and Lemma \ref{Clique-Conver}, we can apply the Lebesgue dominant convergence theorem 
and derive 
\begin{equation*}
\int_{0}^{T} 
\eta (t )  \cdot (y  (t)- x   (t))
dt 
\leq \int_{0}^{T} \varphi _{ G, p   } (y   (t) ) dt  
- \int_{0}^{T} \varphi _{ G, p  } (x   (t) ) dt  .
\end{equation*}
Then by the abstract theory for the $L^2 (0, T ; \R ^N  )$-realization of the subdifferential operator
(see, e.g, 
\cite[Proposition 1.1]{Ken75}
and 
\cite[Proposition 3]{PUni}), we can assure that 
$\eta (t) \in \partial \varphi _{G,p } (x (t))$ for a.e. $t\in (0, T )$.

Next fix $z \in L ^2 (0, T ; \R ^N ) $ such that 
$z =  (z_1 (\cdot ) ,\ldots z_m (\cdot )  , a_ 1 (\cdot ) , \ldots , a_ m (\cdot ) )  $ arbitrarily 
and let  $z ^i  : =  (z_1 (\cdot ) ,\ldots z_m (\cdot )  , a ^i  _ 1 (\cdot ) , \ldots , a  ^i  _ m (\cdot ) )  $. 
Remark that  
$ z ^ i \to z $ strongly in $C([0,T] ; \R ^N )$
by the assumption \eqref{Assum-Th-S2-1}. 
Multiplying \eqref{Sec2-8}
by $z ^i  - x^i $ and integrating over $[0, T ]$, we have 
\begin{align*}
&\int_{0}^{T} 
\LC  h ^i  (t)  - \frac{d}{dt} x ^ i  (t) - D \varphi _{ G, p , q ^i   } (x ^i   (t) )  \RC  \cdot (z ^ i  (t)- x^ i   (t))
dt \\
\leq &
\int_{0}^{T} (  I _{K_{a ^i   } (t) } ) _{\lambda ^i  } (z ^i  (t)) dt  
-
\int_{0}^{T} (  I _{K_{a ^ i  } (t) } ) _{\lambda ^i  } (x ^i  (t) ) dt \leq 0 ,
\end{align*}
where we use the fact that 
$\frac{1}{\lambda } \LC H x (t)  -  a^ i (t)  \RC \in  \partial   \LC I _{K_{a ^i } (t)} \RC _{\lambda ^i } ( x^ i (t) )  $, 
$ z ^i  (t) \in K_{a ^i   } (t)  $, and 
$ (  I _{K_{a ^i   } (t) } ) _{\lambda ^i  } \geq 0 $. 
Passing the limit as $i \to \infty $, we obtain 
\begin{equation*}
\int_{0}^{T} 
\LC  h (t)  -x'(t) - \eta (t)  \RC  \cdot (z (t)- x (t))
dt \leq 0 ,
\end{equation*}
which is equivalent to
\begin{align*}
& \int_{0}^{T} 
\LC  h (t)  -x'(t) - \eta (t) \RC  \cdot (z (t)- x (t))
dt  \\
& ~~~~~~~~\leq \int_{0}^{T}   I _{K_{a  } (t) }  (z (t)) dt  
-
\int_{0}^{T}   I _{ K_{a } (t) }  (x (t) ) dt   
\end{align*}
by $ x(t) , z  (t) \in K_{a   } (t)  $ for any $t \in [0, T ]$.  
Consequently, we can show that  $\xi  (t) \in \partial I  _{ K_a (t)  } (x (t))$  for a.e. $t\in (0, T )$
by  \cite[Proposition 1.1]{Ken75}
and then $x $ satisfies the all requirements  of the solution to the original problem \eqref{Original-Problem}. 
Since the solution to \eqref{Original-Problem} is unique, 
 the limit $x$ is determined independently of the choice of subsequences 
and thus the whole sequence $ \{ x ^ i \} _{i \in \N }$ also converges to $x$. 
\end{proof}


\section{Optimal Control Problem for Approximated Equation}

We define 
the solution operator 
$\Lambda ^{ q ,\lambda  } : \mathscr{C} \to W ^{1 ,2 } (0, T ; \R ^N )$
by the correspondence of $\Lambda ^{ q ,\lambda  } (a  ) $ with the solution to \eqref{Approx-Prob},
where we impose the initial data on $x_ 0 \in K _a (0)$
so that the convergence as $q \to \infty , \lambda \to 0 $ is valid.
Since the initial data depends on the control $a $ in this setting,
the optimal control problem for this equation 
can be interpreted as a problem which is associated with a hybrid control by the initial data and the external force.
By this analogy, we set the cost function by 
\begin{equation}
\label{Cost-Appro}
\begin{split}
 J ^{q , \lambda } (a ) 
&: = 
\frac{1}{2}
\int_{0}^{T} \|  \Lambda ^{q , \lambda }  (a )  (t) - x _{\ast} (t) \| ^2 dt 
+  
\frac{1}{2}
\int_{0}^{T} \|  a (t) \| ^2 dt  \\
&\hspace{4cm} + 
 \frac{\lambda }{2} \|  \Lambda ^{q , \lambda }  (a) (T) - z _ {\ast}  \|  ^2  
+
 \frac{\lambda }{2}  \|  a (0 ) \|  ^2 ,
\end{split}
\end{equation}
where $z _{ \ast  } $ is a (dummy) target of  final state 
(we later see that $z _{\ast }$ can be chosen independently of $x_ {\ast} (\cdot )$).

We first check that $ J ^{q , \lambda } $ possesses a global minimizer:
\begin{Th}
\label{Exi-OC-Existence-appro} 
Let $ p ,q >1 $ and $\lambda > 0 $.
For any $M > 0$, there exists at least one $a ^ {\star ; q , \lambda ,  } \in \mathscr{U} ^M _{{\rm ad}}  $
(see \eqref{admissible})
such that 
\begin{equation}
\label{minimize-appro}
J  ^{  q , \lambda }  (a ^ {\star ;  q , \lambda } )  = \min _{a  \in \mathscr{U} ^M _{{\rm ad}} } J ^{  q , \lambda } (a ) . 
\end{equation}
\end{Th}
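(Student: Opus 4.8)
The plan is to mimic the direct-method argument already carried out in the proof of Proposition \ref{Exi-OC-Existence}, now applied to the approximation cost $J^{q,\lambda}$. Concretely, I would take a minimizing sequence $\{a^k\}_{k\in\N}\subset\mathscr{U}^M_{\mathrm{ad}}$ for $J^{q,\lambda}$. As in the earlier proof, the definition of $\mathscr{U}^M_{\mathrm{ad}}$ together with boundedness of $J^{q,\lambda}$ gives a uniform $W^{1,2}(0,T;\R^N)$ bound on $\{a^k\}$, so after passing to a subsequence (without relabeling) one has $a^k\to a^{\star;q,\lambda}$ weakly in $W^{1,2}(0,T;\R^N)$ and strongly in $C([0,T];\R^N)$ for some $a^{\star;q,\lambda}\in\mathscr{U}^M_{\mathrm{ad}}$. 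The admissible set is weakly closed because $\mathscr{C}$ is a closed subspace and the constraint $\int_0^T\|a'\|^2\,dt\le M$ is convex and weakly lower semicontinuous, so the limit indeed lies in $\mathscr{U}^M_{\mathrm{ad}}$.

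The central step is to establish continuity of the solution operator $\Lambda^{q,\lambda}$ along this convergence. Here I would set $x^k:=\Lambda^{q,\lambda}(a^k)$ with initial data $x^k_0=(x_{01},\ldots,x_{0n},a^k_1(0),\ldots,a^k_m(0))\in K_{a^k}(0)$, and invoke Theorem \ref{Convergence-to-original-sol} with the \emph{fixed} parameters $q^i\equiv q$, $\lambda^i\equiv\lambda$, together with $h^i\equiv h$ and the convergent data $a^i=a^k$, $x^i_0=x^k_0$. Strictly speaking Theorem \ref{Convergence-to-original-sol} is phrased for $q^i\to\infty$, $\lambda^i\to 0$, so the cleanest route is to extract the a priori estimates \eqref{Apriori-01}--\eqref{Apriori-04} directly for the fixed-$(q,\lambda)$ problem (these hold verbatim, with constants independent of $k$ since the bounds in \eqref{Sec2-4} and \eqref{Sec2-6} are uniform and $\{a^k\}$ is $W^{1,2}$-bounded), and then repeat the convergence argument of that theorem with $q,\lambda$ frozen: Ascoli--Arzel\`a yields $x^k\to x^{\star}$ strongly in $C([0,T];\R^N)$ and weakly in $W^{1,2}$, the continuity of $D\varphi_{G,p,q}$ passes the nonlinear term to the limit, and the Yosida-approximation term converges since $a^k\to a^{\star;q,\lambda}$ in $C([0,T];\R^N)$. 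By uniqueness of the solution to \eqref{Approx-Prob} one concludes $x^{\star}=\Lambda^{q,\lambda}(a^{\star;q,\lambda})$, hence $\Lambda^{q,\lambda}(a^k)\to\Lambda^{q,\lambda}(a^{\star;q,\lambda})$ strongly in $C([0,T];\R^N)$.

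With this continuity in hand, the conclusion follows from lower semicontinuity of each term of $J^{q,\lambda}$. The two running-cost integrals converge: the tracking term $\frac12\int_0^T\|\Lambda^{q,\lambda}(a^k)(t)-x_\ast(t)\|^2\,dt$ converges by the $C([0,T];\R^N)$-convergence of the states, while $\frac12\int_0^T\|a^k(t)\|^2\,dt$ converges by the strong $C([0,T];\R^N)$-convergence of the controls (one may alternatively use only weak lower semicontinuity here). The two endpoint penalty terms $\frac{\lambda}{2}\|\Lambda^{q,\lambda}(a^k)(T)-z_\ast\|^2$ and $\frac{\lambda}{2}\|a^k(0)\|^2$ are continuous under uniform convergence, since pointwise evaluation at $t=T$ and $t=0$ is continuous on $C([0,T];\R^N)$. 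Combining these, $\liminf_{k\to\infty}J^{q,\lambda}(a^k)\ge J^{q,\lambda}(a^{\star;q,\lambda})$, and since $\{a^k\}$ is minimizing this forces $J^{q,\lambda}(a^{\star;q,\lambda})=\min_{a\in\mathscr{U}^M_{\mathrm{ad}}}J^{q,\lambda}(a)$.

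The main obstacle I anticipate is purely bookkeeping rather than conceptual: Theorem \ref{Convergence-to-original-sol} is not directly applicable because it drives $q\to\infty$ and $\lambda\to 0$, whereas here $(q,\lambda)$ are held fixed while the data vary. One therefore cannot cite it as a black box and must instead isolate the genuinely needed ingredient, namely stability of $\Lambda^{q,\lambda}$ with respect to $(a,x_0)$ at fixed $(q,\lambda)$. This is in fact easier than the limit $q\to\infty$, because for fixed $q>1$ the operator $D\varphi_{G,p,q}$ is an ordinary continuous (indeed $C^1$) monotone map by Lemma \ref{Clique-Smooth}, so the standard monotonicity/Gronwall estimate for the difference $x^k-x^{\star}$ gives continuous dependence directly, and no Mosco-convergence machinery is required. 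The remaining care is simply to verify that the endpoint penalty terms, absent from the original cost \eqref{Cost-Original}, are handled by continuity of evaluation at the endpoints, which the $C([0,T];\R^N)$-convergence supplies immediately.
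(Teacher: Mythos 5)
Your proposal is correct and takes essentially the same route as the paper's proof: a minimizing sequence bounded in $W^{1,2}(0,T;\R^N)$, the a priori estimates from the proof of Theorem \ref{Convergence-to-original-sol} repeated verbatim at fixed $(q,\lambda)$, and passage to the limit via the continuity of $D\varphi_{G,p,q}$ and $H$ together with uniqueness for \eqref{Approx-Prob}. Your additional care about weak closedness of $\mathscr{U}^M_{\rm ad}$, the control-dependent initial data, and the endpoint penalty terms simply fills in details that the paper's terse argument leaves implicit.
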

\begin{proof}
Let $\{ a^ k \} _{ k \in \N }$ be a minimizing sequence of $J ^{q , \lambda } $, 
which clearly satisfies the  uniform boundedness in $W^{1, 2 } ( 0, T ; \R ^N ) $. 
Let the limit of (subsequence of) $ \{ a ^ k \} $ be denoted by $ a ^{\star ; q , \lambda }$. 
By repeating exactly the same a priori estimates as that in our proof of Theorem \ref{Convergence-to-original-sol},
we can assure that 
\begin{equation*}
  \LC  \sup _{ 0 \leq t \leq T} \| x ^ k (t)\| \RC
+
\int_{0}^{T} \LN \frac{d}{dt}  x^k (t) \RN ^2 dt  \leq C_ 3 ,
\end{equation*}
where $C_ 3$ is a constant independent of $k$.
By the continuity of $D \varphi _{ G , p,  q } $ and $H $, 
the limit of (subsequence of) $\{ x ^{k} \} _{k \in \N }$ becomes a unique solution to \eqref{Approx-Prob}
with $a = a ^{\star ; q , \lambda } $, whence follows that 
$ a ^{\star ; q , \lambda }  $ is a global minimizer of the cost function $ J ^{  q , \lambda } $.
\end{proof}

We next consider the necessary optimality condition for the approximation problem.
To this end, we show the G\^{a}teaux differentiability of the
approximated solution operator $\Lambda ^{q , \lambda }$ and the cost function $ J ^{q, \lambda } $.

\begin{Th}
\label{Gateaux-solution}
Let $ p ,q >3$. Then 
$\Lambda ^{q , \lambda } : \mathscr{C} \to W^{1, 2 } (0, T ; \R ^N )$ is G\^{a}teaux differentiable
at any point and in any direction.
Moreover,  
the G\^{a}teaux derivative 
${\rm d} \Lambda ^{q , \lambda } (a ; b )$ at $a \in \mathscr{C} $  in the direction  $b  \in \mathscr{C} $ 
coincides with $\Xi ^{ a , b } \in C ^1 ([0, T] ; \R ^ N)$, which is a unique solution to
\begin{equation}
\begin{cases}
~~\DS  \frac{d}{dt} \Xi ^{ a , b } (t) + D^2 \varphi _{G, p , q } (  x (t)  ) \Xi ^{ a , b } (t )
		+ \frac{1}{\lambda } (H \Xi ^{ a , b }  (t)  - b  (t) ) = 0 , \\[3mm]
~~\DS \Xi ^{ a , b } ( 0) 
	=b (0) = (0 , \ldots , 0 , b _1 (0), \ldots ,  b  _m (0)), \\[3mm]
 ~~ x  = \Lambda ^ {q ,\lambda } (a )   .
\end{cases}
\tag{$ {\rm L} ^  { a , b   }$}
\label{LinearizedProb}
\end{equation}
\end{Th}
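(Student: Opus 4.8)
The plan is to compute the Gâteaux derivative directly through difference quotients, linearizing $D\varphi_{G,p,q}$ by the fundamental theorem of calculus. Fix $a,b\in\mathscr{C}$ and, for $s\in(0,1]$, write $x:=\Lambda^{q,\lambda}(a)$, $x_s:=\Lambda^{q,\lambda}(a+sb)$, and set $w_s:=(x_s-x)/s$. Since the initial datum attached to $\Lambda^{q,\lambda}(a)$ is $(x_{01},\dots,x_{0n},a_1(0),\dots,a_m(0))\in K_a(0)$, perturbing $a$ into $a+sb$ only shifts the last $m$ components, so $x_s(0)-x(0)=s\,b(0)$ and hence $w_s(0)=b(0)$; this is exactly the initial condition prescribed in \eqref{LinearizedProb}. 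Because $a+sb$ stays bounded in $W^{1,2}(0,T;\R^N)$ for $s\in(0,1]$, the a priori estimates from the proof of Theorem \ref{Convergence-to-original-sol} (namely \eqref{Apriori-01} and \eqref{Apriori-03}) give $\sup_s\LC\sup_t\|x_s(t)\|+\int_0^T\|x_s'\|^2\,dt\RC\le C$, so all the $x_s$ and $x$ lie in a fixed ball $B\subset\R^N$. Since $p,q>3>2$, Lemma \ref{Clique-Smooth} makes $\varphi_{G,p,q}\in C^2$, so $t\mapsto D^2\varphi_{G,p,q}(x(t))$ is continuous; together with $b\in\mathscr{C}\hookrightarrow C([0,T];\R^N)$ the linear ODE \eqref{LinearizedProb} has continuous coefficients, whence $\Xi^{a,b}\in C^1$ exists and is unique by standard linear ODE theory.

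First I would subtract the equation \eqref{Approx-Prob} for $x$ from that for $x_s$ and divide by $s$. The only nonlinear term is $D\varphi_{G,p,q}$, which I linearize by writing
\[
\frac{D\varphi_{G,p,q}(x_s(t))-D\varphi_{G,p,q}(x(t))}{s}=A_s(t)\,w_s(t),\qquad A_s(t):=\int_0^1 D^2\varphi_{G,p,q}\LC x(t)+\theta\,(x_s(t)-x(t))\RC d\theta .
\]
Each $A_s(t)$ is symmetric and positive semidefinite because $\varphi_{G,p,q}$ is convex, and $\sup_{s,t}\|A_s(t)\|\le C$ by the bound \eqref{Sec2-6} (with $k=2$) on $B$. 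Thus $w_s$ solves the linear problem $w_s'+A_s w_s+\tfrac1\lambda(Hw_s-b)=0$ with $w_s(0)=b(0)$.

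The heart of the argument is to compare $w_s$ with $\Xi:=\Xi^{a,b}$. Setting $r_s:=w_s-\Xi$ (so $r_s(0)=0$), the difference solves
\[
r_s'+A_s r_s+\LC A_s-D^2\varphi_{G,p,q}(x)\RC\Xi+\tfrac1\lambda H r_s=0 .
\]
Testing against $r_s$ and using that both $A_s(t)$ and $H$ are positive semidefinite (so $A_s r_s\cdot r_s\ge0$ and $H r_s\cdot r_s=\sum_{j=1}^m|(r_s)_{n+j}|^2\ge0$) removes the two unsigned quadratic terms and leaves
\[
\tfrac12\tfrac{d}{dt}\|r_s(t)\|^2\le \LN\LC A_s(t)-D^2\varphi_{G,p,q}(x(t))\RC\Xi(t)\RN\,\|r_s(t)\| .
\]
Here I would invoke the $C^3$-regularity guaranteed by $p,q>3$ (Lemma \ref{Clique-Smooth}): on $B$ the map $D^2\varphi_{G,p,q}$ is Lipschitz, so $\|A_s(t)-D^2\varphi_{G,p,q}(x(t))\|\le C\sup_t\|x_s(t)-x(t)\|=Cs\sup_t\|w_s(t)\|\le Cs$, the last bound coming from an elementary Gronwall estimate on $w_s$ itself (again using the positivity of $A_s$ and $H$). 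Gronwall's inequality with $r_s(0)=0$ then gives $\sup_t\|r_s(t)\|\le Cs$, and feeding this back into the equation for $r_s$ bounds $\int_0^T\|r_s'\|^2\,dt\le Cs^2$. Hence $w_s\to\Xi^{a,b}$ strongly in $W^{1,2}(0,T;\R^N)$ as $s\to0$, which is precisely the asserted Gâteaux differentiability with derivative $\Xi^{a,b}$.

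I expect the main obstacle to be the uniform-in-$s$ control of the linearization error $A_s-D^2\varphi_{G,p,q}(x)$: this is where the hypothesis $p,q>3$ is genuinely needed, since it upgrades the mere continuity of $D^2\varphi_{G,p,q}$ to local Lipschitz continuity and thereby produces the decisive rate $O(s)$. The structural reason the energy estimate closes at all is the convexity of $\varphi_{G,p,q}$ together with the nonnegativity of $H$, which ensure that the two unsigned quadratic forms appearing in the test are harmless regardless of their magnitude.
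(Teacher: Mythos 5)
Your proposal is correct and takes essentially the same route as the paper: both compare the difference quotient with the solution $\Xi^{a,b}$ of \eqref{LinearizedProb}, drop the unsigned quadratic terms using the positive semidefiniteness of the Hessian (convexity of $\varphi_{G,p,q}$) and of $H$, and close a Gronwall estimate with an $O(s)$ bound on the linearization error, which is exactly where $p,q>3$ enters. The only difference is how the Taylor remainder is packaged: the paper applies the mean-value form $D^3\varphi_{G,p,q}(\Theta(t))(x^s(t)-x(t))^2$ to the unscaled quantity $\hat{x}=x^s-x-s\,\Xi^{a,b}$, while you apply the integral-form Hessian $A_s$ together with local Lipschitz continuity of $D^2\varphi_{G,p,q}$ to the scaled quantity $r_s=\hat{x}/s$; these are interchangeable and both rest on the $C^3$ regularity from Lemma \ref{Clique-Smooth}.
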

\begin{proof}
Fix $a , b \in \mathscr{C}$ and define 
\begin{equation*}
 x ^{ s } : = \Lambda ^ {q , \lambda } (a + s b )   ~~~~s \in [ -1 , 1 ]
\end{equation*}
and  $x  : = x ^ 0  =  \Lambda ^ {q , \lambda } (a  )  $. 
By  reprising the same calculation for  a priori estimates in our proof of Theorem \ref{Convergence-to-original-sol},
we have 
\begin{equation*}
 \sup _{-1 \leq s \leq  1 } \LC \sup _{0 \leq t \leq T} \LN  x^ s  (t) \RN \RC 
 +
 \sup _{-1 \leq s \leq  1 } \LC
\int_{0}^{T}  \LN \frac{d}{dt}  x^ s  (t) \RN ^2 dt \RC 
\leq C_4 .
\end{equation*}
Here and henceforth, $C_ 4$ stands for a general constant independent of $s \in [-1 , 1 ]$.

Let $ \overline{x} := x^ s - x  $, which satisfy 
\begin{equation}
\label{Apriori-Sec3-01} 
\begin{cases}
~~\DS \overline{x}  ' (t) + D\varphi _{G, p , q } (x ^s  (t) ) -  D\varphi _{G, p , q } (x (t) )
		+ \frac{1}{\lambda } (H \overline{ x } (t)  - s b  (t) ) = 0  , \\
~~\DS \overline{ x }  (0) = ( 0 , \ldots , 0 ,  s b  _1 (0), \ldots ,  s b   _m (0))
= s b   (0 ) . 
\end{cases}
\end{equation}
Remark that $D\varphi _{G, p , q } $ coincides with the subdifferential of convex function $\varphi _{G, p , q }$.
Then the monotonicity of subdifferential operators yields
\begin{equation*}
\LC 
 D\varphi _{G, p , q } (x ^s  (t) ) -  D\varphi _{G, p , q } (x (t) )
\RC \cdot  \overline{ x } (t)  \geq 0 ~~~~\forall t \in [0, T ] .
\end{equation*}
Furthermore, by the definition, 
\begin{equation*}
H  \overline{ x } (t)  \cdot  \overline{ x } (t)  \geq 0 ~~~~\forall t \in [0, T ] .
\end{equation*}
Hence testing \eqref{Apriori-Sec3-01} by $\overline{x} $, we get 
\begin{equation*}
\frac{1}{2} \frac{d}{dt } \| \overline{x} (t) \| ^2 \leq   \frac{ | s|  }{\lambda } \| b (t) \| \|  \overline{x} (t)  \|  ,
\end{equation*}
which leads to 
\begin{equation}
\label{Apriori-Sec3-02} 
\sup _{0 \leq t \leq T } \| \overline{x} (t) \|  \leq
| s| \| b (0) \|
+
   \frac{| s|}{\lambda } \int_{0}^{T} \| b (t) \| dt 
   \leq C_4 |s |  .
\end{equation}

Next define 
\begin{equation*}
\hat{x} (t) := x ^s  (t)- x (t) - s \Xi ^ {a , b  } (t)  ,
\end{equation*}
which satisfies 
\begin{equation}
\label{Apriori-Sec3-03}
\begin{cases}
~~\DS \frac{d}{dt} \hat{x} (t)
 + D \varphi _{G, p , q } (x ^s  (t) ) -  D\varphi _{G, p , q } (x (t) ) 
  \\
 \DS ~~~~\hspace{3cm} 
		- s D ^2 \varphi _{G, p , q } ( x  (t) ) \Xi ^ {a , b  } (t) + \frac{1}{\lambda } H \hat{x} (t)  = 0 , \\
~~\DS \hat{ x }  (0) = 0 .
\end{cases}
\end{equation}
Here $\varphi _{G, p , q } $  is a function of class  $C ^3$
since we assume $ p, q  >3 $. 
Then the $3$rd-order Taylor expansion is  
\begin{align*}
 &D\varphi _{G, p , q } (x^s  (t) )
  -  D\varphi _{G, p , q } (x (t) )
	- s  D^2 \varphi _{G, p , q } (x (t) ) \Xi ^{a , b }  (t )  \\[2mm]
 =& 
 D^2 
\varphi _{G, p , q } (x (t) ) (x ^s  (t) - x (t)) 
 + D ^3 
 \varphi _{G, p , q } ( \Theta  (t) ) (x^s  (t) - x (t)) ^2  \\[2mm]
& \hspace{3cm }  	- s  D^2 \varphi _{G, p , q } (x (t) ) \Xi ^{a , b }  (t )   \\[2mm]
  =& 
 D^2 
\varphi _{G, p , q } (x (t) ) \hat{x}(t) 
 + D ^3 
 \varphi _{G, p , q } ( \Theta  (t) ) (x ^s  (t) - x (t)) ^2  ,
\end{align*}
where $ \Theta :[ 0, T] \to \R ^N $ satisfies $ \Theta (t) = \tau x^s  (t) + (1 - \tau ) x  (t)$
with some $\tau = \tau (t) \in [ 0,1 ]$ for each $t \in [0,T ]$. Hence 
\begin{equation*}
\sup _{0 \leq t \leq T }  | \Theta (t) | \leq 
\sup _{0 \leq t \leq T } \max\{ | x ^s  (t)| , |x(t) | \} \leq C_4 .
\end{equation*}
We also recall that $ \varphi _{G, p, q }$ is a convex function and then 
 $D ^2  \varphi _{G, p, q } (x (t) )$  is a non-negative matrix.
 Therefore, 
by testing \eqref{Apriori-Sec3-03} by $\hat{x}$, we can derive from \eqref{Apriori-Sec3-02}
\begin{align*}
\frac{1}{2} \frac{d}{dt} \| \hat{x } (t) \| ^2
&\leq  
\LC \sup _{0 \leq t \leq T}   \| D ^3 
 \varphi _{G, p , q } ( \Theta  (t) ) \|  \RC \|   \overline{  x }  (t)   \|  ^2 \| \hat{x } (t)  \| \\
& \leq 
C_4 |s | ^2  \| \hat{x } (t)  \| ,
\end{align*}
that is, 
\begin{equation*}
\sup _{0 \leq t \leq T } \| \hat{x} (t) \|  
\leq  C_4 
|s| ^2 .
\end{equation*}
We also obtain 
\begin{align*}
& \sup_{0\leq t \leq T }\LN \frac{d}{dt} \hat{x} (t) \RN  
 \leq 
\sup_{0\leq t \leq T }\| D ^2 \varphi _{G, p , q } ( x (t) ) \| \| \hat{ x } (t) \|  \\
&\hspace{2cm} +
\sup_{0\leq t \leq T }\| D ^3 \varphi _{G, p , q } (\Theta (t) ) \|  \| x ^s   (t) - x (t) \| ^2 
+ \frac{1}{\lambda } \| \hat{x} (t) \| 
\leq 
 C_{4} |s | ^2 .
\end{align*}
Therefore 
\begin{equation}
\label{Apriori-Sec3-04} 
\begin{split}
\LN 
\frac{\Lambda ^{q , \lambda } (a + s b  )
-
\Lambda ^ {q , \lambda } (a )
 }{s}
 - \Xi ^ {a , b }
  \RN  _{W ^{1 , \infty } (0,T ; \R  ^N )}
&= 
\frac{1}{|s|}  \LN 
\hat{x} 
 \RN  _{W ^{1 , \infty } (0,T ; \R  ^N )} \\
& \leq 
 C_{4} |s|  \to 0 ~~(s \to 0 ) ,
\end{split}
\end{equation}
whence follows the assertion in Theorem \ref{Gateaux-solution}.
\end{proof}

By the uniform convergence \eqref{Apriori-Sec3-04}, we can immediately guarantee the following: 
\begin{Co}
\label{Gateaux-cost}
Let $ p ,q >3$. Then 
$J ^{q , \lambda } : \mathscr{C} \to \R $ is G\^{a}teaux differentiable
at any point and in any direction.
Moreover,  the G\^{a}teaux derivative 
${\rm d} J (a ; b )$ at  $a \in \mathscr{C} $  in the direction  $b  \in \mathscr{C} $ 
coincides with 
\begin{align*}
{\rm d} J ^ {q , \lambda } (a  ; b )  
&=
 \int_{0}^{T}  (   \Lambda ^ {q , \lambda } (a) (t) - x _{\ast} (t) ) \cdot \Xi ^ {a , b} (t)  dt 
+
 \int_{0}^{T}  a (t) \cdot b  (t)  dt  \\
&~~~~~~~~~
 + 
 \lambda  (  \Lambda ^ {q , \lambda } (a) (T) - z _ {\ast}  ) \cdot \Xi ^ {a , b} (T)  
+
\lambda   a (0 ) \cdot b  (0) ,
\end{align*}
where $  \Xi ^ {a, b } \in C ^1 ([0,T ] ; \R ^N )$ is the solution to \eqref{LinearizedProb}.
\end{Co}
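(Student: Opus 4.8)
The plan is to differentiate the four summands of $J^{q,\lambda}$ separately and to feed each difference quotient into the strong $W^{1,\infty}$-convergence of the difference quotient of the solution operator recorded in \eqref{Apriori-Sec3-04}. Fix $a, b \in \mathscr{C}$ and abbreviate $x^s := \Lambda^{q,\lambda}(a+sb)$ and $x := \Lambda^{q,\lambda}(a)$. Theorem \ref{Gateaux-solution} gives $\frac{x^s - x}{s} \to \Xi^{a,b}$ in $W^{1,\infty}(0,T;\R^N)$ as $s\to 0$; since $W^{1,\infty}(0,T;\R^N) \hookrightarrow C([0,T];\R^N)$, this yields uniform convergence on $[0,T]$ of both $x^s \to x$ and $\frac{x^s - x}{s} \to \Xi^{a,b}$, including control at the endpoints $t=0,T$. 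These two facts are the only inputs needed.

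For the tracking term I would rewrite the difference quotient by factoring the difference of squares,
\[
\frac{1}{2s}\int_0^T \LC \| x^s(t) - x_{\ast}(t)\|^2 - \| x(t) - x_{\ast}(t)\|^2 \RC dt
= \frac{1}{2}\int_0^T \LC x^s(t) + x(t) - 2x_{\ast}(t)\RC \cdot \frac{x^s(t) - x(t)}{s}\, dt ,
\]
and then let $s\to 0$. Because $x^s\to x$ and $\frac{x^s-x}{s}\to\Xi^{a,b}$ uniformly, the integrand converges uniformly, so the integral tends to $\int_0^T (x(t)-x_{\ast}(t))\cdot\Xi^{a,b}(t)\,dt$, the first term of the claimed formula. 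The control term $\frac{1}{2}\int_0^T \|a(t)\|^2\,dt$ is handled directly: its difference quotient equals $\int_0^T a\cdot b\,dt + \frac{s}{2}\int_0^T \|b\|^2\,dt \to \int_0^T a(t)\cdot b(t)\,dt$, giving the second term.

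The two penalty terms carrying $\lambda$ are treated identically, with pointwise evaluation replacing integration. The same factorization for the final-state penalty yields $\frac{\lambda}{2}\LC x^s(T)+x(T)-2z_{\ast}\RC\cdot\frac{x^s(T)-x(T)}{s}$, which converges to $\lambda\,(x(T)-z_{\ast})\cdot\Xi^{a,b}(T)$ because the uniform convergence of Theorem \ref{Gateaux-solution} legitimizes evaluating the limits at $t=T$ (here continuity of $\Xi^{a,b}\in C^1$ matters), and the initial-data penalty $\frac{\lambda}{2}\|a(0)\|^2$ produces $\lambda\,a(0)\cdot b(0)$ by the elementary computation. Summing the four limits reproduces the asserted expression for ${\rm d}J^{q,\lambda}(a;b)$. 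I would also note, as the statement anticipates, that $z_{\ast}$ enters only linearly through this computation, so it may be prescribed arbitrarily, independently of $x_{\ast}$.

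The computation is essentially a chain rule, so there is no deep obstacle; the single delicate point is the interchange of limit with integration (and with endpoint evaluation). This is exactly what estimate \eqref{Apriori-Sec3-04} secures: it upgrades the pointwise-in-$s$ convergence of the difference quotient to uniform convergence on $[0,T]$, whence the two tracking integrals converge by uniform convergence and the two boundary penalties converge by continuity of evaluation at $t=0,T$.
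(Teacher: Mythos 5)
Your proof is correct and is essentially the paper's own argument: the paper proves this corollary simply by asserting it follows immediately from the uniform convergence \eqref{Apriori-Sec3-04}, and your four-term difference-quotient computation is exactly the verification left implicit there. One minor imprecision: since $x_{\ast}$ is only in $L^2(0,T;\R^N)$, the integrand of the tracking term converges in $L^1(0,T)$ (estimate the term involving $x_{\ast}$ by Cauchy--Schwarz against the uniformly convergent factor $\frac{x^s-x}{s}-\Xi^{a,b}$) rather than uniformly, but this does not affect the conclusion.
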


We here define the following adjoint problem of \eqref{Approx-Prob}:
\begin{equation}
\begin{cases}
~\DS  - \gamma ' (t) +  D ^2 \varphi _{G , p ,  q } ( x ( t ) )  \gamma  (t) + \frac{1}{\lambda } H \gamma (t)  \\[2mm]
\DS \hspace{5cm} 
		=-  \frac{1}{\lambda } \LC x(t) - x_{\ast} (t) \RC , ~&~ t\in (0, T ), \\[2mm]  
~\gamma (T) = -   \LC x(T) - z_ {\ast} \RC,  \\[2mm]
~ x= \Lambda ^{q , \lambda } (a ),
\end{cases}
\tag{D$_{a, q , \lambda  }  $}
\label{DualProblem}
\end{equation}
where $x ^\ast $ and $z ^{\ast}$ are given target in \eqref{Cost-Appro}.
Note that the Hesse matrix $ D ^2 \varphi _{G , p ,  q } ( x ( t ) )  $ is symmetric.
Obviously, this problem possesses 
a unique solution $\gamma \in W ^{1, 2 } (  0,T ; \R ^N  )$ for any $a \in \mathscr{C}$
(recall that $ x _{\ast} \in L^2 (0,T ; \R ^N )$).
We define 
$ \Lambda ^ {q , \lambda } _{\ast} : \mathscr{C} \to W ^{1, 2 } (  0,T ; \R ^N  )$ by 
the relationship $ \Lambda ^ {q , \lambda } _{\ast} (a) = \gamma $. 
Then we can state the necessary optimality condition for the approximation problem
as follows:
\begin{Th}
\label{Opt-Condition} 
Let  $ a ^{\star ; q , \lambda  }  \in \mathscr{U}  ^ M  _{\rm ad} $ be a optimal control of $J ^ {q , \lambda }$
in $ \mathscr{U}  ^ M  _{\rm ad}  $.
Then $ \gamma ^{\star ; q , \lambda  }  := \Lambda _{\ast } ^ {q , \lambda   } (a ^{\star ; q , \lambda  }  ) $ satisfies 
$  a ^{\star ; q , \lambda  }=  H \gamma  ^{\star ; q , \lambda  }  $.
\end{Th}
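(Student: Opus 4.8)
The plan is to turn the Gâteaux derivative formula of Corollary~\ref{Gateaux-cost} into a pointwise identity by eliminating the linearized state in favour of the adjoint state $\gamma$ solving \eqref{DualProblem}. Writing for brevity $x=\Lambda^{q,\lambda}(a^{\star;q,\lambda})$, $\gamma=\gamma^{\star;q,\lambda}=\Lambda^{q,\lambda}_{\ast}(a^{\star;q,\lambda})$ and $\Xi=\Xi^{a^{\star;q,\lambda},b}$ for an arbitrary test direction $b\in\mathscr{C}$, the heart of the argument is the duality between the linearized equation \eqref{LinearizedProb} and the adjoint equation \eqref{DualProblem}, carried out by differentiating the scalar function $t\mapsto\gamma(t)\cdot\Xi(t)$ and integrating over $[0,T]$.

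First I would compute $\frac{d}{dt}(\gamma\cdot\Xi)=\gamma'\cdot\Xi+\gamma\cdot\Xi'$ and substitute the two equations. The crucial point is that the Hesse matrix $D^2\varphi_{G,p,q}(x(t))$ is symmetric (as noted just before \eqref{DualProblem}) and that $H$ is symmetric, so the quadratic terms $D^2\varphi_{G,p,q}(x)\gamma\cdot\Xi-\gamma\cdot D^2\varphi_{G,p,q}(x)\Xi$ and $\frac{1}{\lambda}\LC H\gamma\cdot\Xi-\gamma\cdot H\Xi\RC$ cancel, leaving
\begin{equation*}
\frac{d}{dt}\LC\gamma(t)\cdot\Xi(t)\RC=\frac{1}{\lambda}\LC x(t)-x_{\ast}(t)\RC\cdot\Xi(t)+\frac{1}{\lambda}\gamma(t)\cdot b(t).
\end{equation*}
Integrating over $[0,T]$ and inserting the boundary data $\Xi(0)=b(0)$ and $\gamma(T)=-\LC x(T)-z_{\ast}\RC$ then expresses the tracking integral $\int_{0}^{T}\LC x-x_{\ast}\RC\cdot\Xi\,dt$ together with the terminal contribution $\lambda\LC x(T)-z_{\ast}\RC\cdot\Xi(T)$ entirely in terms of $-\int_{0}^{T}\gamma\cdot b\,dt$ and $-\lambda\gamma(0)\cdot b(0)$. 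Here the dummy target $z_{\ast}$, the terminal penalty $\frac{\lambda}{2}\LN\Lambda^{q,\lambda}(a)(T)-z_{\ast}\RN^2$ and the initial penalty $\frac{\lambda}{2}\LN a(0)\RN^2$ in \eqref{Cost-Appro} are exactly the terms engineered so that the $t=T$ contributions match the adjoint terminal condition and cancel.

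Substituting this into the formula of Corollary~\ref{Gateaux-cost}, the terminal terms cancel and, after replacing $\gamma$ by $H\gamma$ using $v\cdot b=Hv\cdot b$ for $b\in\mathscr{C}$ (only the last $m$ components of $b$ are nonzero) together with $Ha^{\star;q,\lambda}=a^{\star;q,\lambda}$, I expect to arrive at
\begin{equation*}
{\rm d}J^{q,\lambda}(a^{\star;q,\lambda};b)=\int_{0}^{T}\LC a^{\star;q,\lambda}-H\gamma\RC\cdot b\,dt+\lambda\LC a^{\star;q,\lambda}(0)-H\gamma(0)\RC\cdot b(0).
\end{equation*}
Finally I would invoke first-order optimality of $a^{\star;q,\lambda}$ over the convex set $\mathscr{U}^M_{{\rm ad}}$: the directional derivative must vanish along admissible directions, so testing against directions $b$ with $b(0)=0$ forces $a^{\star;q,\lambda}-H\gamma=0$ almost everywhere, hence everywhere by continuity of both $a^{\star;q,\lambda}$ and $H\gamma$; the remaining boundary term then vanishes automatically, yielding $a^{\star;q,\lambda}=H\gamma^{\star;q,\lambda}$.

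The main obstacle is twofold. The technical crux is the duality bookkeeping: one must verify that all the quadratic and terminal contributions cancel exactly, which hinges on the symmetry of $D^2\varphi_{G,p,q}$ and of $H$ together with the precise matching of $\gamma(T)$ against the terminal cost, and this is precisely what the added $\lambda$-penalties in \eqref{Cost-Appro} are designed to secure. The more conceptual point is the passage from the variational inequality on the constrained set $\mathscr{U}^M_{{\rm ad}}$ to the clean equation $a^{\star;q,\lambda}=H\gamma^{\star;q,\lambda}$: one must argue that stationarity holds along a family of directions rich enough to recover the pointwise identity (equivalently, that the reduced gradient of $J^{q,\lambda}$ vanishes), which is where the structure of the admissible set has to be handled with care.
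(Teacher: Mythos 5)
Your adjoint--duality computation is sound and is in substance identical to the paper's: the paper integrates $\int_0^T \gamma' \cdot \Xi \, dt$ by parts and uses the symmetry of $D^2\varphi_{G,p,q}$ and $H$ together with the terminal condition $\gamma(T) = -(x(T)-z_\ast)$ and $\Xi(0)=b(0)$, arriving at exactly your formula
\begin{equation*}
{\rm d}J^{q,\lambda}(a^{\star;q,\lambda};b)
=\int_{0}^{T}\LC a^{\star;q,\lambda}(t)-\gamma^{\star;q,\lambda}(t)\RC\cdot b(t)\,dt
+\lambda\LC a^{\star;q,\lambda}(0)-\gamma^{\star;q,\lambda}(0)\RC\cdot b(0),
\end{equation*}
which equals your version with $H\gamma$ since $b\in\mathscr{C}$. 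Up to this point there is no issue.

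The genuine gap is the final step. You assert that ``the directional derivative must vanish along admissible directions,'' but for minimization over the convex set $\mathscr{U}^M_{\rm ad}$ the first-order condition is only the variational inequality ${\rm d}J^{q,\lambda}(a^{\star;q,\lambda};b-a^{\star;q,\lambda})\geq 0$ for all $b\in\mathscr{U}^M_{\rm ad}$, not equality. Your plan to test with directions $b$ satisfying $b(0)=0$ implicitly requires both $+b$ and $-b$ to be admissible perturbations of $a^{\star;q,\lambda}$, and this fails precisely when the optimum sits on the boundary $\int_0^T\|(a^{\star;q,\lambda})'\|^2\,dt=M$: writing $\int_0^T\|(a^{\star;q,\lambda}+sb)'\|^2\,dt = M + 2s\int_0^T (a^{\star;q,\lambda})'\cdot b'\,dt + s^2\int_0^T\|b'\|^2\,dt$, one sees that for any direction with $\int_0^T (a^{\star;q,\lambda})'\cdot b'\,dt=0$ and $b'\not\equiv 0$, no nonzero multiple of $b$ keeps the perturbation inside $\mathscr{U}^M_{\rm ad}$, so the family of usable directions is not rich enough to conclude $a^{\star;q,\lambda}-H\gamma^{\star;q,\lambda}=0$ a.e. You flag this as a point ``to be handled with care,'' but deferring it leaves the proof incomplete, and it is exactly here that the paper's proof does something you are missing: it chooses the single competitor $b=\varepsilon H\gamma^{\star;q,\lambda}+(1-\varepsilon)a^{\star;q,\lambda}$, i.e.\ moves from $a^{\star;q,\lambda}$ a short distance toward $H\gamma^{\star;q,\lambda}$, and inserts it into the variational inequality. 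Since $b-a^{\star;q,\lambda}=\varepsilon(H\gamma^{\star;q,\lambda}-a^{\star;q,\lambda})$ lies in the range of $H$ and $Ha^{\star;q,\lambda}=a^{\star;q,\lambda}$, the inequality collapses to
\begin{equation*}
-\varepsilon\int_{0}^{T}\LN a^{\star;q,\lambda}(t)-H\gamma^{\star;q,\lambda}(t)\RN^2 dt
-\varepsilon\lambda\LN a^{\star;q,\lambda}(0)-H\gamma^{\star;q,\lambda}(0)\RN^2\geq 0,
\end{equation*}
a sign-definite expression that forces $a^{\star;q,\lambda}=H\gamma^{\star;q,\lambda}$ without ever requiring the derivative to vanish in all directions. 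Your proposal needs this (or an equivalent) device; as written, the concluding step would fail at boundary optima.
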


\begin{proof}
Since $ \mathscr{U} _{\rm ad} ^  M$ is a convex set, 
$ \tau b  + (1 -\tau ) a ^{\star ; q , \lambda  }  \in \mathscr{U} ^\text{ad} _ M $ for any $ \tau \in ( 0, 1 ) $ and 
$b  \in  \mathscr{U} _{\rm ad} ^ M$.
Then passing the limit of 
\begin{equation*}
\frac{1}{\tau } \LC
J ^ {q , \lambda }
(\tau b  + (1 -\tau ) a ^{\star ; q , \lambda  } ) - J ( a ^{\star ; q , \lambda  }) \RC \geq 0
\end{equation*}
as $\tau \to 0 $, we get 
\begin{equation}
\label{opt-cond-02}
 {\rm d}  J ( a ^{\star ; q , \lambda  } ; b -  a ^{\star ; q , \lambda  }  )\geq  0 ~~~~\forall  b  \in  \mathscr{U} _{\rm ad} ^ M  .
\end{equation}

Here, by using $ \Lambda ^ {q , \lambda } _{\ast} (a) =  \gamma $ and \eqref{LinearizedProb}, 
we can rewrite $\text{d} J ^ {q , \lambda } (a  ; b ) $ as 
\begin{align*}
&\text{d} J ^ {q , \lambda } (a  ; b )  \\
=&
\lambda \int_{0}^{T} \LC    \gamma ' (t) -  D ^2 \varphi _{G , p ,  q } ( x ( t ) )  \gamma  (t) - \frac{1}{\lambda } H \gamma (t) \RC
 \cdot \Xi ^ {a , b} (t)  dt \\
&~~~~~~+
 \int_{0}^{T}  a (t) \cdot b (t)  dt  
 - 
 \lambda   \gamma (T) \cdot \Xi ^ {a , b} (T)  
+
\lambda   a (0 ) \cdot b  (0) \\
=&
- \lambda \int_{0}^{T}  \LC   \frac{d}{dt} \Xi ^ {a , b} (t) +  D ^2 \varphi _{G , p ,  q } ( x ( t ) ) \Xi ^ {a , b} (t)+ \frac{1}{\lambda } H \Xi ^ {a , b} (t) \RC
 \cdot \gamma  (t)  dt \\
&~~~~~~+
 \int_{0}^{T}  a (t) \cdot b (t)  dt  
 - 
 \lambda   \gamma (0) \cdot \Xi ^ {a , b} (0)  
+
\lambda   a (0 ) \cdot b  (0) \\
=&
 \int_{0}^{T}  ( a (t) -\gamma (t) )  \cdot b (t)  dt  
 +
\lambda   ( a (0 ) - \gamma (0) ) \cdot b  (0) .
\end{align*}
By replacing $b$ with $b -  a ^{\star ; q , \lambda  }  $ and using \eqref{opt-cond-02}, 
we can see that if $a ^{\star ; q , \lambda  } \in \mathscr{U} _{\rm ad} ^ M$
is an optimal control, then 
$ \gamma ^{\star ; q , \lambda  }  := \Lambda _{\ast } ^ {q , \lambda   } (a ^{\star ; q , \lambda  }  ) $ satisfies 
\begin{equation}
\begin{split}
\label{opt-cond-01} 
& \int_{0}^{T}  \LC  a ^{\star ; q , \lambda  }(t) -\gamma ^{\star ; q , \lambda  }   (t) \RC  \cdot \LC  b ( t)-   a ^{\star ; q , \lambda  } (t) \RC   dt  
\\
&\hspace{1cm} +
\lambda \LC   a ^{\star ; q , \lambda  } (0 )   -\gamma ^{\star ; q , \lambda  }   (0) \RC  \cdot \LC b ( 0)
	-   a ^{\star ; q , \lambda  } (0) \RC  \geq 0 \hspace{5mm } \forall b  \in  \mathscr{U} _{\rm ad} ^ M .
\end{split}
\end{equation}
According to the definition of $ \mathscr{U} _{\rm ad} ^ M$, 
there is some suitable small $\varepsilon > 0 $ such that $ \varepsilon  H \gamma ^{\star ; q , \lambda  } \in \mathscr{U} _{\rm ad} ^ M $.
Since $\mathscr{U} _{\rm ad} ^ M$ is convex,
$b =  \varepsilon  H \gamma ^{\star ; q , \lambda  } + (1- \varepsilon )a ^{\star ; q , \lambda  } \in \mathscr{U} _{\rm ad} ^ M $ 
holds. Therefore from \eqref{opt-cond-01} and $z \cdot b(t)  = Hz  \cdot b(t) $ for any $z \in \R ^N $ and $b \in \mathscr{C}$, 
we can derive 
\begin{equation*}
- \varepsilon  \int_{0}^{T}  \LN  a ^{\star ; q , \lambda  }(t) - H \gamma ^{\star ; q , \lambda  }   (t) \RN ^2 dt  
 -
\varepsilon \lambda \LN    a ^{\star ; q , \lambda  } (0 )   -\gamma ^{\star ; q , \lambda  }   (0) \RN ^2 \geq 0  
\end{equation*}
which yields $  a ^{\star ; q , \lambda  }=  H \gamma  ^{\star ; q , \lambda  }  $.
\end{proof}


\section{Convergence to the Original Problem}

In this final section, we discuss  the convergence of optimal controls
for the approximation problem. 
\begin{Th}
\label{Conv-OC-appro-to-Ori} 
Let $p >1 $ and fix $M > 0 $. Then regardless of the choice of  optimal controls
$ a ^{\star ; q , \lambda } $ of  $J  ^{q , \lambda }$ in $\mathscr{U} ^M _{\rm ad }$
for each parameter $q  > 1$ and $ 0 < \lambda <1  $,
the sequence
 $\{ a ^{\star ; q , \lambda } \} _{q > 1 , 0 < \lambda <1  } $
 is uniformly bounded in $W ^{1,2 } (0,T ; \R^ N )$ and 
possesses a subsequence
 $\{ a ^{\star ; q ^i  , \lambda ^ i } \} _{ i \in \N } $ ($q _i \to \infty , \lambda _i \to 0$) 
which converges to an optimal control of the original cost function $J$
as $i \to \infty $.
\end{Th}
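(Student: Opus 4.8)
The plan is to run the direct method in three movements: establish a uniform $W^{1,2}$ bound on the optimal controls, extract a convergent subsequence, and then identify the limit as a minimizer of the original cost $J$ by a $\Gamma$-convergence-type argument whose engine is Theorem \ref{Convergence-to-original-sol}.

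For the uniform bound I would exploit that the null control $0$ belongs to $\mathscr{U}^M_{\rm ad}$, so minimality gives $J^{q,\lambda}(a^{\star;q,\lambda}) \leq J^{q,\lambda}(0)$ for every admissible $q>1$, $0<\lambda<1$. The right-hand side is controlled uniformly: the a priori estimate \eqref{Apriori-01} yields a constant $C_2$ independent of $q$ and $\lambda$ bounding $\Lambda^{q,\lambda}(0)$ in $C([0,T];\R^N)$ (the Yosida term carries a favorable sign in the energy identity), and since $\lambda<1$ the two $\lambda$-weighted terms in \eqref{Cost-Appro} stay bounded. Reading off the control contribution $\tfrac12\int_0^T \|a^{\star;q,\lambda}\|^2$ from \eqref{Cost-Appro} then gives a uniform $L^2$ bound, while membership in $\mathscr{U}^M_{\rm ad}$ supplies $\int_0^T \|(a^{\star;q,\lambda})'\|^2 \leq M$; together these deliver the claimed uniform $W^{1,2}$ bound.

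Next, along any prescribed sequences $q^i\to\infty$, $\lambda^i\to0$, the bounded family $\{a^{\star;q^i,\lambda^i}\}$ admits a subsequence converging weakly in $W^{1,2}$ and, by Ascoli--Arzela, strongly in $C([0,T];\R^N)$ to some $\bar a$. The structural form $(0,\ldots,0,\bar a_1,\ldots,\bar a_m)$ persists under uniform convergence, and weak lower semicontinuity of the convex seminorm $a\mapsto\int_0^T\|a'\|^2$ preserves the constraint $\leq M$, so $\bar a\in\mathscr{U}^M_{\rm ad}$. Since the $a$-dependent initial data $x_0^i=(x_{01},\ldots,x_{0n},a^{\star;q^i,\lambda^i}_1(0),\ldots)$ converge (uniform convergence forces $a^{\star;q^i,\lambda^i}(0)\to\bar a(0)$) and $h$ is fixed, all hypotheses of Theorem \ref{Convergence-to-original-sol} hold, whence $x^i:=\Lambda^{q^i,\lambda^i}(a^{\star;q^i,\lambda^i})\to\Lambda(\bar a)$ strongly in $C([0,T];\R^N)$. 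Combining the strong $C$-convergence of both $x^i$ and $a^{\star;q^i,\lambda^i}$ with the vanishing of the two $\lambda^i$-weighted extra terms in \eqref{Cost-Appro}, I obtain $J^{q^i,\lambda^i}(a^{\star;q^i,\lambda^i})\to J(\bar a)$.

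For optimality I would test against an arbitrary competitor $b\in\mathscr{U}^M_{\rm ad}$: applying Theorem \ref{Convergence-to-original-sol} to the constant sequence $a^i\equiv b$ (with its fixed initial data) gives $\Lambda^{q^i,\lambda^i}(b)\to\Lambda(b)$ in $C$, so by the same reasoning $J^{q^i,\lambda^i}(b)\to J(b)$. Passing to the limit in the minimality inequality $J^{q^i,\lambda^i}(a^{\star;q^i,\lambda^i})\leq J^{q^i,\lambda^i}(b)$ then yields $J(\bar a)\leq J(b)$ for every $b\in\mathscr{U}^M_{\rm ad}$, i.e.\ $\bar a$ is an optimal control of the original problem. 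The main obstacle is not a single estimate but the careful bookkeeping around Theorem \ref{Convergence-to-original-sol}: it must be invoked twice, once along the optimal controls and once along the fixed competitor, each time verifying convergence of the $a$-dependent initial data, and one must confirm that the $\lambda$-terms genuinely vanish so that \emph{both} the limit of the minimal values and the competitor values are pinned exactly to $J$.
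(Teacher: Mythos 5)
Your proof is correct and follows essentially the same route as the paper: a uniform $W^{1,2}$ bound obtained by testing minimality against a fixed admissible control, subsequence extraction via compactness, and two applications of Theorem \ref{Convergence-to-original-sol} (once along the optimal controls with their $a$-dependent initial data, once along a fixed competitor) so as to pass to the limit in the minimality inequality, with the $\lambda$-weighted terms vanishing. The only cosmetic difference is that you compare against an arbitrary competitor $b \in \mathscr{U}^M_{\rm ad}$, whereas the paper compares against a known optimal control $a^{\star\star}$ of the original problem supplied by Proposition \ref{Exi-OC-Existence}; both arguments are valid and yield the same conclusion.
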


\begin{proof}
Fix $ a \in \mathscr{C}  $ arbitrarily.
 Then
 for every $ q > 1 $ and $0< \lambda  < 1 $,
 \begin{align*}
J ^{q , \lambda } (  a ^{\star ; q , \lambda } ) 
&\leq  
J ^{q , \lambda } (  a ) \\
&\leq   
\frac{1}{2}
\int_{0}^{T} \|  \Lambda ^{q , \lambda }  ( a )  (t) - x _{\ast} (t) \| ^2 dt 
+  
\frac{1}{2}
\int_{0}^{T} \|  a  (t) \| ^2 dt  \\
&\hspace{4cm} + 
 \frac{1 }{2} \|  \Lambda ^{q , \lambda }  (a ) (T) - z _ {\ast}  \|  ^2  
+
 \frac{1 }{2}  \|  a (0 ) \|  ^2 
\end{align*}
holds regardless of the choice of optimal control $ a ^{\star ; q , \lambda } $.
By repeating the same argument in our proof of Theorem \ref{Convergence-to-original-sol},
the solutions 
$ \{ \Lambda ^{q , \lambda }  (a )  \} _{q > 1 , 0 < \lambda <1 }$
is uniformly bounded in $W ^{1,  2 } ( 0,T ; \R ^N )$ with respect to the parameters $q , \lambda $.
Hence $ \{   a ^{\star ; q , \lambda }    \} _{q > 1 , 0 < \lambda <1 }$
is also uniformly bounded in $W ^{1,2} (0, T ;\R ^N  )$ independently of  $q , \lambda $ 
by the definition of $J   ^{ q , \lambda } $ and $ \mathscr{U} ^M _{{\rm ad}} $.

Let $ \{   a ^{\star ; q ^i  , \lambda ^i }    \} _{i \in \N  }$ ($q ^ i  \to \infty , \lambda ^i \to 0 $)
be a convergent subsequence and $ a ^{\star  } \in \mathscr{U} ^M _{{\rm ad}}  $ be its limit.
Furthermore choose an optimal control of the original problem $a  ^{\star \star } \in \mathscr{U} ^M _{{\rm ad}} $.
Then by virtue of Theorem \ref{Convergence-to-original-sol},
$ \{ \Lambda ^{q ^i  , \lambda ^i }  ( a ^{\star ; q ^i  , \lambda ^i } )  \} _{ i \in \N  }$
and 
$ \{ \Lambda ^{q ^i  , \lambda ^i }  ( a ^{\star \star } )  \} _{ i \in \N  }$
converge to 
$ \Lambda   ( a ^{\star } )  $
and 
$  \Lambda   ( a ^{\star \star } ) $
strongly in $C([0,T] ; \R ^N )$,
respectively.
Taking the limit of 
\begin{equation*}
J   ^{ q ^i , \lambda ^i } ( a ^{\star ; q ^i  , \lambda ^i }  )
\leq 
J   ^{ q ^i , \lambda ^i } ( a ^{\star \star } )
\end{equation*}
as $i \to \infty $, we obtain $ J  ( a ^{\star  }  )\leq J ( a ^{\star \star } )$, 
which implies that the limit $ a ^{\star  }$ is also the minimizer of $J $.
\end{proof}

Finally,  we investigate what dose the limit of the optimality condition mean. 
\begin{Co}
\label{Conv-OC-appro-to-Ori-1} 
Fix $M >0 $ and $p > 3$.  Let 
$a ^{\star ; q ^i , \lambda ^i }$ be an optimal control of $ J  ^ {q ^i , \lambda ^i } $,   
$\{ a ^{\star ; q ^i , \lambda ^i } \} _{i \in \N  } \subset  \mathscr{U} _{\rm ad} ^ M$
be a convergent sequence in $C ([ 0, T ]; \R ^N )$
as $  q ^i \to \infty $ and $ \lambda ^i  \to 0$,
and let $ a ^{\star }  \in \mathscr{U} ^\text{ad} _ M$ be its limit.
Then for the solution to the adjoint  problem $ \gamma  ^{\star ; q ^i , \lambda ^i }  = \Lambda_{\ast } ^ {q ^i  , \lambda ^i  }
( a ^{\star ; q ^i , \lambda ^i } )$,
the sequence $\{ H \gamma  ^{\star ; q ^i , \lambda ^i }  \} _{i \in \N }$  
strongly converges to $a^{\star }  $ in $C ([ 0, T ]; \R ^N )$. 
\end{Co}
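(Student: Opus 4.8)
The plan is to read the statement off directly from the necessary optimality condition already established in Theorem \ref{Opt-Condition}, so that essentially no new estimate is needed. The one point to settle first is the applicability of that theorem to the members of our sequence: Theorem \ref{Opt-Condition} rests on the G\^ateaux differentiability supplied by Theorem \ref{Gateaux-solution} and Corollary \ref{Gateaux-cost}, both of which require $p, q > 3$. We are given $p > 3$, and since $q ^ i \to \infty$ we have $q ^ i > 3$ for all sufficiently large $i$. Because convergence of a sequence in $C([0,T]; \R ^N )$ depends only on its tail, it suffices to argue with these indices.

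For each such $i$, the control $a ^{\star ; q ^i , \lambda ^i }$ is by hypothesis an optimal control of $J ^{q ^i , \lambda ^i }$ in $\mathscr{U} ^M _{\rm ad}$, so Theorem \ref{Opt-Condition} yields the identity
\[
a ^{\star ; q ^i , \lambda ^i } = H \gamma ^{\star ; q ^i , \lambda ^i } ,
\qquad
\gamma ^{\star ; q ^i , \lambda ^i } = \Lambda _{\ast} ^{q ^i , \lambda ^i } ( a ^{\star ; q ^i , \lambda ^i } ) .
\]
In other words, $H \gamma ^{\star ; q ^i , \lambda ^i }$ is literally the same object as $a ^{\star ; q ^i , \lambda ^i }$ for every large $i$. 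The conclusion is then immediate: by assumption $a ^{\star ; q ^i , \lambda ^i } \to a ^{\star}$ strongly in $C([0,T]; \R ^N )$, and substituting the above equality gives $H \gamma ^{\star ; q ^i , \lambda ^i } \to a ^{\star}$ strongly in the same topology.

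I would emphasize what is, conceptually, the real content of the statement rather than a technical obstacle: we never have to establish convergence of the adjoint states $\gamma ^{\star ; q ^i , \lambda ^i }$ themselves. This could be genuinely delicate, since the adjoint equation \eqref{DualProblem} contains the singular terms $\frac{1}{\lambda ^i} H \gamma$ and $-\frac{1}{\lambda ^i}( x - x _{\ast})$ together with the Hessian $D ^2 \varphi _{G , p , q ^i }$, which need not remain bounded as $q ^i \to \infty$. The optimality identity lets us bypass all of this: only the controlled components $H \gamma ^{\star ; q ^i , \lambda ^i }$ are pinned down, and they inherit their convergence directly from the optimal controls. Thus the single genuine check is the applicability of Theorem \ref{Opt-Condition} for large $i$, and beyond that there is no remaining analytic difficulty.
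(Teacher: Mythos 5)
Your proposal is correct and is essentially the paper's own proof: both simply invoke Theorem \ref{Opt-Condition} to get the identity $a ^{\star ; q ^i , \lambda ^i } = H \gamma ^{\star ; q ^i , \lambda ^i }$ and then pass to the limit using the assumed convergence of the controls. Your added remark that $q ^i > 3$ only for large $i$ (so the optimality condition applies to the tail of the sequence) is a small but legitimate refinement that the paper leaves implicit.
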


\begin{proof}
By Theorem \ref{Opt-Condition}, we have 
$H \gamma  ^{\star ; q ^i , \lambda ^i } = a ^{\star ; q ^i , \lambda ^i } $ for every $i \in \N $.
Hence by the assumption, we immediately see that 
$\{ H \gamma  ^{\star ; q ^i , \lambda ^i } \}_{i \in \N }$ 
strongly converges to the limit of  $\{  a ^{\star ; q ^i , \lambda ^i } \} _{i \in \N } $.
\end{proof}

\begin{Rem}
{\rm 
We see that the dummy target $z _ {\ast}$ does not need to match 
the genuine target $x_ \ast $ in our argument above. 
Although there might be  some advantage in computation of optimal control by  setting $z_{\ast} = x_{\ast } (T)$,
we unfortunately cannot find any  theoretical reason for it in this paper. 
}
\end{Rem}

\begin{Rem}
{\rm 
We here comment on the boundedness of $(\id - H ) \gamma  ^{\star ; q ^i , \lambda ^i } 
= (\gamma  ^{\star ; q ^i , \lambda ^i } _1 ,\ldots , \gamma  ^{\star ; q ^i , \lambda ^i }  _ n , 0 \ldots , 0)$.
Abbreviate $\gamma  ^{\star ; q ^i , \lambda ^i } $  to $\gamma  ^{ i } $.  
Recall that $\gamma  ^i $ satisfies 
\begin{equation}
\label{OC-convergence-01} 
\begin{cases}
\DS ~~ - \lambda ^i   \frac{d}{dt} \gamma  ^i  (t) +\lambda ^i   D ^2 \varphi _{G , p ,  q^i } ( x^i ( t ) ) \gamma  ^i  (t)
 + H \gamma  ^i (t) \\[2mm]
\hspace{5cm} \DS = -  \LC x ^ i(t) - x_ {\ast} (t) \RC ,  \\[2mm]
\DS ~~  \gamma ^i  (T) = -   \LC x ^i (T) - z_ {\ast} \RC, 
\end{cases}
\end{equation}
where $x ^ i  = \Lambda ^{q ^i , \lambda ^i } (a ^{\star ; q^ i , \lambda  ^i })$ is uniformly bounded
with respect to $i$.
Multiplying 
\eqref{OC-convergence-01} by $\gamma ^i $, we have 
\begin{equation*}
 - \frac{\lambda ^i}{2}\frac{d}{dt}  \| \gamma  ^i  (t) \| ^2  \leq   \|  x ^ i(t) - x_ {\ast} (t) \| \| \gamma ^i (t) \| ,
\end{equation*}
where we use the fact that 
$ D ^2 \varphi _{G , p ,  q ^i  } ( x ^i ( t ) )$ is non-negative and $H $ is linear monotone mapping.
Hence we get by the Gronwall inequality 
\begin{equation}
\label{OC-convergence-02} 
\sup _{i \in \N } \LC \sup_{ 0 \leq t \leq T} \lambda ^i  \| \gamma  ^i  (t) \| \RC < \infty .
\end{equation}
However,  it seems to be difficult to derive better estimates for $(\id - H ) \gamma  ^{\star ; q ^i , \lambda ^i }$
Indeed, by differentiating \eqref{Formula-Clique} once again, we obtain 
\begin{equation*}
\sup _{0\leq t \leq T } |\lambda ^i   D ^2 \varphi _{G , p ,  q ^i  } ( x^i ( t ) )  |
\leq 
C \lambda ^{i} (1 + q ^{i} )  \sup _{0\leq t \leq T }\| x^i ( t )  \| ^{p-2} 
\end{equation*}
with some constant $C$ independent of $\lambda ^i $ and $q ^i  $. 
Hence it is not easy to deal with the 2nd term of L.H.S. of \eqref{OC-convergence-01}
and show the boundedness as 
$\lambda ^i \to 0 $ and $q ^i \to \infty $. 
}
\end{Rem}


\appendix
\def\thesection{\Alph{section}}
\section{Appendix: Other Properties for the Clique Expansion of Hypergraph Laplacian}

In addition to the above mentioned in \S 2,
we can find some interesting properties 
of
the clique expansion of hypergraph Laplacian $D \varphi _{G ,p ,q  } $
as an approximation operator. 
On the remaining pages, we shall state some of them for future work of hypergraph  Laplacian.

We begin with the Poincar\'{e}-Wirtinger type inequality,
which the original hypergraph  Laplacian satisfies (see \cite{I-U}). 
We first prepare the following lemma.
\begin{Le}
\label{Lem-A1} 
Let $  p, q  >1$. 
Then it follows that 
\begin{equation}
\label{Ap-eq01} 
\bm{1} _V \cdot D \varphi _{G, p, q } (x) =0
~~~~
x \cdot D \varphi _{G, p, q } (x) =p \varphi _{G ,p ,q  } (x)
~~~~~~\forall x \in \R ^N ,
\end{equation}
where 
$\bm{1} _V :=  \sum_{i =1}^{N} \bm{1} _ i = (1 , \ldots, 1 ) $.
\end{Le}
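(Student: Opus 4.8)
The plan is to derive both identities directly from the explicit formula \eqref{Formula-Clique} for the partial derivatives $\partial_{x_l}\varphi_{G,p,q}$, reorganising the double sums so that an antisymmetry argument applies. Since $p,q>1$, formula \eqref{Formula-Clique} is valid and each summand $|x_l-x_i|^{q-2}(x_l-x_i)$ is continuous (equal to $0$ when $x_l=x_i$), so no differentiability issue intervenes.

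For the first identity, I would sum \eqref{Formula-Clique} over $l=1,\ldots,N$ and interchange the order of summation to sum over edges first, obtaining
\begin{equation*}
\bm{1}_V\cdot D\varphi_{G,p,q}(x)
=\sum_{e\in E}w(e)(f_{e,q}(x))^{p-q}\sum_{l\in e}\sum_{i\in e}|x_l-x_i|^{q-2}(x_l-x_i).
\end{equation*}
The inner double sum is antisymmetric under the exchange $l\leftrightarrow i$, because $|x_i-x_l|^{q-2}(x_i-x_l)=-|x_l-x_i|^{q-2}(x_l-x_i)$; hence it vanishes identically and the first identity follows.

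For the second identity, the cleanest route is to observe that each $f_{e,q}$ is positively homogeneous of degree $1$ (immediate from \eqref{Sec2-1}), so $\varphi_{G,p,q}$ is positively homogeneous of degree $p$ by \eqref{Sec2-2}, and Euler's identity for homogeneous functions then yields $x\cdot D\varphi_{G,p,q}(x)=p\varphi_{G,p,q}(x)$ at once. If a self-contained computation is preferred, I would instead form $x\cdot D\varphi_{G,p,q}(x)=\sum_l x_l\,\partial_{x_l}\varphi_{G,p,q}(x)$, reorganise as above, and symmetrise the inner sum: relabelling $l\leftrightarrow i$ shows
\begin{equation*}
\sum_{l\in e}\sum_{i\in e}x_l|x_l-x_i|^{q-2}(x_l-x_i)
=\frac12\sum_{l\in e}\sum_{i\in e}|x_l-x_i|^{q}
=(f_{e,q}(x))^{q},
\end{equation*}
the last equality being the definition \eqref{Sec2-1}. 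Multiplying by $w(e)(f_{e,q}(x))^{p-q}$ and summing over $e$ collapses the exponents to $(f_{e,q}(x))^{p}$, whose weighted sum is $p\varphi_{G,p,q}(x)$ by \eqref{Sec2-2}.

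There is no genuine obstacle here: both identities are purely structural consequences of the antisymmetry of $|x_l-x_i|^{q-2}(x_l-x_i)$ in the pair $(l,i)$ together with the degree-$p$ homogeneity of $\varphi_{G,p,q}$. The only point deserving a word of care is the interpretation of $|x_l-x_i|^{q-2}(x_l-x_i)$ at coincident coordinates $x_l=x_i$, which is unproblematic for $q>1$ since the factor $|x_l-x_i|^{q-2}$ is then dominated by $(x_l-x_i)$.
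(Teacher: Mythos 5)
Your proof is correct. For the first identity your argument is in substance identical to the paper's: the paper packages the cancellation into an antisymmetric matrix $(a_{ij})$ with $a_{ii}=0$, $a_{ij}=-a_{ji}$ and observes that $\bm{1}_V\cdot D\varphi_{G,p,q}(x)=\sum_{i,j}a_{ij}=0$, while you perform the same antisymmetry cancellation edge-by-edge after interchanging the sums over $l$ and over $e\ni l$. For the second identity you offer two routes, and your self-contained symmetrization (relabelling $l\leftrightarrow i$ to get $\tfrac12\sum_{l,i\in e}|x_l-x_i|^{q}=(f_{e,q}(x))^{q}$, then collapsing the exponents) is again exactly the paper's computation, which appears there as $\sum_{i<j}(x_i-x_j)a_{ij}$. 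Your primary route, however, is a genuine alternative the paper does not use: since each $f_{e,q}$ is positively homogeneous of degree $1$, $\varphi_{G,p,q}$ is positively homogeneous of degree $p$ and is $C^1$ for $p,q>1$ (Lemma \ref{Clique-Smooth}), so differentiating $t\mapsto\varphi_{G,p,q}(tx)=t^{p}\varphi_{G,p,q}(x)$ at $t=1$ gives Euler's identity $x\cdot D\varphi_{G,p,q}(x)=p\varphi_{G,p,q}(x)$ at once. This buys brevity and shows the second identity is purely a homogeneity statement, independent of the graph structure; the explicit computation, on the other hand, exhibits the edge-wise identity that the paper reuses later in the appendix (e.g.\ in Lemma \ref{Lem-eigen}). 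One incidental remark: the paper's definition of $a_{ij}$ carries a stray factor $p$ that is inconsistent with \eqref{Formula-Clique}; your normalization (without that factor) is the correct one, the factor $p$ in the final answer arising, as you note, from $\sum_{e\in E}w(e)(f_{e,q}(x))^{p}=p\varphi_{G,p,q}(x)$.
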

\begin{proof}
Recall \eqref{Formula-Clique}, i.e., the specific formula of $\partial _{x_ k } \varphi _{G ,p ,q } (x)$.
Defining 
\begin{equation*}
a_{ij   }
:= 
\begin{cases}
~~ 0  ~&~ \text{ if } i = j  , \\[2mm]
~~\DS p
\sum_{e \in E \text{ s.t. } i , j   \in e }  w(e) 
 ( f_{e ,q } (x) ) ^{p -q  }
 |x_i  -x_ j | ^{ q-2 } (x_i   -x_ j)  
~&~ \text{ if } i \neq j  ,
\end{cases}
\end{equation*}
we can see that 
$ \partial _{x_i } \varphi _{G, p ,q  } (x)  = \sum_{j  =1}^{N }  a_{ij  } $
and 
$ \bm{1} _V \cdot D \varphi _{G, p ,q  } (x)  = \sum_{ i , j =1}^{N }  a_{ij  } $.
Since $a_{ij  } = - a _{ ji  }$ and $a_{ii} = 0 $, we obtain the first identity of \eqref{Ap-eq01}. 
Moreover,  we have 
\begin{align*}
x \cdot  D \varphi _{G ,p,q } (x )  
&= 
\sum_{i  =1 }^{N }x _i \partial _{ x_i } \varphi _{G, p ,q } (x) 
= \sum_{i , j  =1 }^{N } x _i   a _ { ij   } \\
&  =
\sum_{i < j }  ( x _i  a _ {ij  } + x_ j a_{ji } ) 
=
\sum_{i < j }  ( x _ i -  x_ j )  a_{ ij  }  \\
&  =
p
\sum_{i < j  } ~~
\sum_{e \in E \text{ s.t. } i , j  \in e }  w(e) 
 ( f_{e ,q } (x)) ^{p -q  }
 |x_i   -x_ j | ^{ q }  \\
&=
p
\sum_{e \in E }  ~~
w(e) 
 ( f_{e ,q } (x)) ^{p -q  }
\sum_{i< j ~~\text{s.t. } i , j  \in e }  
 |x_i   -x_ j | ^{ q }  \\
& = 
p
\sum_{e \in E }  ~~
w(e) 
 ( f_{e ,q } (x)) ^{p -q  }
 ( f_{e ,q } (x)) ^{q  }
 = 
p \varphi _{G ,p , q } (x ).
\end{align*}
Hence the second identity of  \eqref{Ap-eq01} also holds.
\end{proof}

To state the next assertion, we define the mean value of $ x\in \R ^N $ by 
\begin{equation*}
\overline{x} := \LC \frac{1}{N}  \sum_{i=1}^{N} x_i \RC \bm{1}_V 
=\LC 
\frac{1}{N}  \sum_{i=1}^{N} x_i , \ldots ,
\frac{1}{N}  \sum_{i=1}^{N} x_i \RC .
\end{equation*}
Moreover, we  assume that the hypergraph $G =(V ,E ,w)$ is {\it connected},
i.e., for every $ i , j  \in V $ there exist some
$ \mu _1 ,\ldots, \mu _{k-1}  \in V $ and $ e _1 , \ldots e_k \in E$ such that
$ \mu _ {l-1} , \mu  _ l  \in e_l$ holds for any $l=1, 2, \ldots , k $, 
where $\mu_ 0 = i $ and $ \mu _k = j $.
In this case we can define the diameter of the hypergraph by 
\begin{align*}
\text{diam}_G &:= \max _{  i, j  \in V}  \text{dist} (i,j  ) , \\
\text{dist} (i,j  ) &:= 
\min \LD k \in \N  ;~~~
\begin{matrix}
 \mu _1 ,\ldots, \mu _{k-1}  \in V , ~~~
e _1  , \ldots  , e_k \in E\\[2mm]
 ~~\text{s.t.}~~  \mu _ {l-1} , \mu  _ l  \in e_l~~ \forall j=1, \ldots , n , \\[2mm]
 ~~\text{where}~~ \mu _0  =i  , \mu _k  = j . 
\end{matrix}
\RD .
\end{align*}

\begin{Th}
\label{Poincare}
 Let $ p ,q > 1  $ and assume that $G$ is connected.
Then there exist constants $ \gamma _{G, p} , \Gamma _{G, p} > 0 $
which depend only on $p$ and $G$ and are independent of $q$
such that 
\begin{equation}
\label{Poincare-Clique} 
\gamma _{G, p } \|  x - \overline{x} \| ^p  \leq p \varphi _{G,p ,q  } (x) \leq 
\Gamma _{G, p  } \|  x - \overline{x} \| ^p 
~~~~~\forall x \in \R ^N .
\end{equation}
\end{Th}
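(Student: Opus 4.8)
The plan is to prove the two inequalities separately and, in both cases, to reduce them to estimates already available in \S2 and in \cite{I-U}, so that the resulting constants inherit their independence of $q$. The starting point is that $f_{e,q}$ depends on $x$ only through the differences $x_i-x_j$, hence is invariant under adding any multiple of $\bm{1}_V$; consequently $\varphi_{G,p,q}(x)=\varphi_{G,p,q}(x-\overline{x})$, while $\|x-\overline{x}\|$ is unchanged when $x$ is replaced by $x-\overline{x}$. It therefore suffices to establish \eqref{Poincare-Clique} after this translation has been factored out. The upper bound is then immediate: applying \eqref{Sec2-4} to $x-\overline{x}$ gives $\varphi_{G,p,q}(x)=\varphi_{G,p,q}(x-\overline{x})\le \kappa'_{G,p}\|x-\overline{x}\|^p$, and since the constant in \eqref{Sec2-4} is already known to be independent of $q$, one may take $\Gamma_{G,p}=p\kappa'_{G,p}$.

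For the lower bound the cleanest route exploits the monotone comparison between the clique energy and the genuine hypergraph energy. By the left-hand estimate in \eqref{Sec2-6-1} (equivalently Lemma \ref{Clique-Conver}) one has $\varphi_{G,p}(x)\le \varphi_{G,p,q}(x)$, so $p\varphi_{G,p}(x)\le p\varphi_{G,p,q}(x)$. Because $G$ is connected, the Poincar\'e--Wirtinger inequality for the original hypergraph $p$-Laplacian proved in \cite{I-U} supplies a constant $\gamma_{G,p}>0$, depending only on $p$ and $G$, with $\gamma_{G,p}\|x-\overline{x}\|^p\le p\varphi_{G,p}(x)$; chaining the two inequalities yields the left-hand bound in \eqref{Poincare-Clique} with the same $\gamma_{G,p}$.

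If a self-contained argument is preferred, the lower bound can instead be obtained directly, which also makes the $q$-uniformity transparent. For $a,b\in e$ one has $|x_a-x_b|\le f_{e,q}(x)$ since $(f_{e,q}(x))^q\ge |x_a-x_b|^q$. Fixing $i,j\in V$ and a connecting path $\mu_0=i,\dots,\mu_k=j$ with $k=\text{dist}(i,j)\le \text{diam}_G$ and $\mu_{l-1},\mu_l\in e_l$, the triangle inequality gives $|x_i-x_j|\le\sum_{l=1}^k f_{e_l,q}(x)\le \text{diam}_G\,\max_{e\in E} f_{e,q}(x)$. Using $\max_{e\in E}(f_{e,q}(x))^p\le (\min_{e\in E} w(e))^{-1}\,p\,\varphi_{G,p,q}(x)$ together with the elementary identity $\|x-\overline{x}\|^2=\frac1N\sum_{i<j}(x_i-x_j)^2\le \frac{N-1}{2}\max_{i,j}|x_i-x_j|^2$, one arrives at $\|x-\overline{x}\|^p\le C_{G,p}\,p\,\varphi_{G,p,q}(x)$, where $C_{G,p}$ is built only from $N$, $\text{diam}_G$, $\min_{e\in E}w(e)$ and $p$.

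The main obstacle is exactly this $q$-uniformity of $\gamma_{G,p}$. A direct coercivity or compactness argument on the unit sphere of $\{x\in\R^N:\sum_i x_i=0\}$ would show that $\varphi_{G,p,q}$ is positive there, but with a constant that might a priori degenerate as $q\to\infty$. Both routes above avoid this: the first transfers the $q$-free constant from $\varphi_{G,p}$ through the monotonicity $\varphi_{G,p}\le\varphi_{G,p,q}$, and the second relies solely on the pointwise estimate $|x_a-x_b|\le f_{e,q}(x)$, which holds uniformly in $q$. Once $q$-independence is secured, nothing beyond these inequalities and the connectedness hypothesis is required.
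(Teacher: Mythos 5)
Your proposal is correct, and your primary routes differ genuinely from the paper's proof. For the lower bound, the paper argues self-containedly: it fixes $i,j\in V$, runs the triangle inequality along a connecting path, applies the H\"older inequality to get $|x_i-x_j|\le \mathrm{diam}_G^{1/p'}(\min_{e}w(e))^{-1/p}\bigl(\sum_{e}w(e)(f_{e,q}(x))^p\bigr)^{1/p}$, and averages over $j$ to bound $|x_i-\overline{x}_i|$, yielding the explicit constant $\gamma_{G,p}=\min_{e}w(e)/(N^p\,\mathrm{diam}_G^{\,p-1})$; you instead transfer the Poincar\'e--Wirtinger constant of the original hypergraph Laplacian from \cite{I-U} through the monotone comparison $\varphi_{G,p}\le\varphi_{G,p,q}$ of \eqref{Sec2-6-1}, which is shorter and makes the $q$-uniformity automatic, at the price of importing an external result rather than reproving it (your self-contained fallback, replacing H\"older by $\sum_l f_{e_l,q}\le k\max_e f_{e,q}$ and $\max_e f_{e,q}^p\le(\min_e w(e))^{-1}p\varphi_{G,p,q}$, is essentially the paper's argument with different bookkeeping, and its steps all check out). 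For the upper bound, the paper estimates $f_{e,q}(x)$ directly via the $\ell^q$ triangle inequality against $x-\overline{x}$, producing $\Gamma_{G,p}=\bigl(\sum_{e}w(e)\,\#e^{\,p}\bigr)N^{p/2}$; your observation that $\varphi_{G,p,q}$ is invariant under adding multiples of $\bm{1}_V$, so that \eqref{Sec2-4} applied to $x-\overline{x}$ gives $\Gamma_{G,p}=p\kappa'_{G,p}$ at once, is a cleaner derivation of the same fact. In short, the paper buys explicit, trackable constants (later reused in Theorem \ref{Decay}) and full self-containment, while your approach buys brevity and a structural insight: Poincar\'e-type constants pass from $\varphi_{G,p}$ to its clique-expansion approximations through monotonicity.
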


\begin{proof}
Fix $ i , j \in V $ arbitrarily.
By the assumption, 
 there are
$ \mu _1 ,\ldots, \mu _{k-1}  \in V $ and $ e _1 , e _2 , \ldots e_k \in E$ such that
$ \mu _ {l-1} , \mu  _ l  \in e_l$ holds for any $l=1, 2, \ldots , k $.
From the subadditivity, the H\"{o}lder inequality, and 
$ | x_{\mu _{l} } -  x_{\mu _{l -1 } }  | \leq f_{e _{\mu _l} , q } (x)$, 
we can derive 
\begin{align*}
| x_ i  - x_ j  | 
&\leq 
\sum_{l =1}^{k }  |  x_{\mu _{l} } -  x_{\mu _{l -1 } }  | 
\leq \sum_{l =1}^{k }  f_{e _{\mu _l} , q }  (x) \\
& \leq \LC   \sum_{l =1}^{k }   ( f_{e _{\mu _l} , q }  (x) ) ^p \RC ^{1/p }  \text{dist} (i, j )   ^{1/ p'} \\
&\leq 
 \frac{ \text{diam} _G   ^{1/ p'}}{ ( \min _{e\in E }   w(e) ) ^{1/p }   } \LC  \sum_{e\in E } w(e) ( f_{e  , q } (x) ) ^p \RC ^{1/p }   .
\end{align*}
By the definition of $\overline{x} $, we get 
\begin{equation*}
|  \overline{x} _ i  - x _ i  |
  \leq \frac{1}{N} \sum_{j=1}^{N} |x_ i - x_ j | \leq 
 \frac{ \text{diam} _G   ^{1/ p'}}{  ( \min _{e\in E }   w(e) ) ^{1/p }   } \LC  \sum_{e\in E } w(e) ( f_{e  , q } (x) ) ^p \RC ^{1/p }   
\end{equation*}
and then we obtain the lower inequality of \eqref{Poincare-Clique} with
\begin{equation*}
\gamma _{G , p }
= \frac{   \min _{e\in E }   w(e)  }{ N ^p~ \text{diam} _G   ^{ p -1 }} .
\end{equation*}

On the other hand, we get by the triangle inequality for the $\ell  ^q$-norm
\begin{align*}
f_{e , q } (x)
 &= \LC \frac{1}{2}  \sum_{ i, j  \in e  } |x _ i  - x _ j  |  ^q  \RC ^{1/q} 
		= \LC \frac{1}{2}  \sum_{ i, j  \in e  } |x _ i  - \overline{x} _ i + \overline{x} _ j   - x _ j  |  ^q  \RC ^{1/q} \\
 & \leq  \LC \frac{1 }{2}  \sum_{ i, j  \in e  } \LC  |x _ i  - \overline{x} _ i | ^q + |  x _ j -  \overline{x} _ j     |  ^q  \RC  \RC ^{1/q} \\
&	\leq 
 \LC \# e  \RC ^{1/q}  \LC  \sum_{i=1}^{N }  |x _ i   -\overline{x} _ i  |  ^q  \RC ^{1/q} \\
&	
 \leq 
 \begin{cases}
~~\# e  ^{1/q} \| x   -\overline{x} \| ~~&~~\text{if}~~ q \geq 2 , \\
~~\# e  ^{1/q} N ^{( 2 -q ) / 2q   }\| x   -\overline{x} \|  ~~&~~\text{if}~~ q < 2 .
\end{cases}
\end{align*}
Hence with 
\begin{equation*}
\Gamma _{G, p, q}:= 
\begin{cases}
~~ \DS \LC \sum_{e\in E} w(e)   \# e  ^{p/q} \RC ~~&~~\text{if}~~ q \geq 2 ,  \\
~~ \DS \LC \sum_{e\in E} w(e)   \# e  ^{p/q}  \RC  N ^{ p (2 -q  ) / 2q   } ~~&~~\text{if}~~ q < 2 ,
\end{cases}
\end{equation*}
the inverse inequality of \eqref{Poincare-Clique} holds. 
Moreover, by setting 
\begin{equation*}
\Gamma  _{G , p } :=  \LC \sum_{e\in E} w(e)   \# e  ^{p}  \RC  N ^{ p /2   }
\end{equation*}
we have 
$\Gamma  _{G , p ,q } \leq \Gamma  _{G , p }$ for any $q \in [ 1 , \infty )$
and then $\Gamma _{G , p ,q }$ can be replaced with  $\Gamma _{G , p  }$.
\end{proof}

By applying this inequality to the Cauchy problem 
\begin{equation}
\label{decay-Clique}
\begin{cases}
~~ x' (t) + D \varphi _{G, p, q } (x(t)) = 0 ,~~&~~t > 0 ,\\
~~ x(0 ) =x_ 0 ,
\end{cases}
\end{equation}
we can deduce the same type decay estimate of solution 
as that for the Cauchy problem associated with the original hypergraph Laplacian (see \cite{I-U}).
\begin{Th}
\label{Decay}
Let $x $ be a solution to 
\eqref{decay-Clique} and 
define  $X (t) := \| x (t) - \overline{x_0} \| $.
Then for every $t \geq 0 $,
\begin{align*}
&~ X(t) \leq  \LC X(0) ^{2-p }  
			-  ( 2-p )  \gamma _{G,p }  t   \RC _+ ^{1 /  ( 2-p )  } 
& \text{if } 1 \leq p<2, \\[3mm]
 & ~X(t) \leq  X (0) \exp \LC  -  \gamma _{G,p } t \RC
& \text{if } p=2, \\[3mm]
&~ X(t) \leq \LC \frac{1}{X(0) ^{p-2 } }
			+( p-2 )  \gamma _{G,p }  t   \RC  ^{ - 1 /  ( p-2  )  }
& \text{if } p> 2, 
\end{align*}
and
\begin{align*}
&~ X(t)  \geq \LC X(0) ^{2-p }  
			- ( 2-p ) \Gamma _{G,p }   t   \RC _+ ^{1 /  ( 2-p )  }  
& \text{if } 1 \leq p<2, \\[3mm]
&~ X(t)  \geq X (0) \exp \LC -  \Gamma _{G,p }  t \RC  
& \text{if } p=2, \\[3mm]
&~ X(t)  \geq  \LC \frac{1}{X(0) ^{p-2 } } 
			+( p-2 ) \Gamma _{G,p }   t   \RC  ^{ - 1 /  ( p-2  )  }
& \text{if } p> 2, 
\end{align*}
where  $( s ) _+ := \max \{ s, 0\}$ and $ \Gamma _{G,p} ,\gamma _{G,p} $ are constants in Theorem \ref{Poincare}.
\end{Th}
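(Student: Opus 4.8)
The plan is to collapse the vector flow \eqref{decay-Clique} into a single scalar differential inequality for $X(t) = \|x(t) - \overline{x_0}\|$ and then integrate it by separation of variables in each of the three ranges of $p$, using Theorem \ref{Poincare} and Lemma \ref{Lem-A1} as the only inputs.

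First I would record that the mean is conserved. Testing \eqref{decay-Clique} against $\bm{1}_V$ and invoking the first identity $\bm{1}_V \cdot D\varphi_{G,p,q}(x) = 0$ of Lemma \ref{Lem-A1}, one gets $\frac{d}{dt}(\bm{1}_V \cdot x(t)) = 0$, so $\overline{x(t)} = \overline{x_0}$ for all $t \geq 0$. Consequently $X(t) = \|x(t) - \overline{x(t)}\|$, which is exactly the quantity controlled by the Poincar\'{e}--Wirtinger inequality along the trajectory.

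Next I would differentiate $X(t)^2 = \|x(t) - \overline{x_0}\|^2$, which is $C^1$ because $D\varphi_{G,p,q}$ is continuous for $p,q > 1$ (Lemma \ref{Clique-Smooth}). Using $x'(t) = -D\varphi_{G,p,q}(x(t))$, the fact that $\overline{x_0}$ is a multiple of $\bm{1}_V$ (so its contribution vanishes by the first identity of Lemma \ref{Lem-A1}), and the second identity $x \cdot D\varphi_{G,p,q}(x) = p\varphi_{G,p,q}(x)$, I obtain
\begin{equation*}
\frac{1}{2}\frac{d}{dt}X(t)^2 = -(x(t) - \overline{x_0}) \cdot D\varphi_{G,p,q}(x(t)) = -p\varphi_{G,p,q}(x(t)).
\end{equation*}
Theorem \ref{Poincare} then sandwiches the right-hand side, giving
\begin{equation*}
-\Gamma_{G,p}X(t)^p \leq \frac{1}{2}\frac{d}{dt}X(t)^2 \leq -\gamma_{G,p}X(t)^p.
\end{equation*}
The upper estimates of the theorem follow from the right inequality and the lower estimates from the left one, so the two families are proved by the same computation with $\gamma_{G,p}$ and $\Gamma_{G,p}$ interchanged.

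Finally I would integrate the scalar inequality. For $p = 2$ this is just Gronwall, yielding the exponential rate. For $p \neq 2$ the substitution $\frac{d}{dt}X^{2-p} = (2-p)X^{-p}\cdot\frac{1}{2}\frac{d}{dt}X^2$ converts the inequality into one with constant right-hand side, which integrates directly to the stated algebraic formulas in $X^{2-p}$ (for $p < 2$) and in $X^{p-2}$ (for $p > 2$). The hard part will be the range $1 \leq p < 2$: there the comparison nonlinearity $z \mapsto z^{p/2}$ is not Lipschitz at the origin, $X$ reaches zero in finite time, and the naive division by $X$ breaks down. To handle this cleanly I would argue through the scalar comparison principle applied to $Y = X^2$ against the explicit solutions of $z' = -2\gamma_{G,p}z^{p/2}$ and $z' = -2\Gamma_{G,p}z^{p/2}$; once $X$ hits $0$ it stays at the equilibrium $\overline{x_0}$, and the truncation $(\cdot)_+$ in the statement is precisely the envelope recording this finite-time extinction. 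The same comparison argument shows that for $p \geq 2$ the trajectory never vanishes, so the algebraic and exponential rates hold globally in $t$.
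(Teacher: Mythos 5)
Your proposal is correct and follows essentially the same route as the paper: conservation of the mean via the first identity of Lemma \ref{Lem-A1}, the energy identity $\frac{d}{dt}\|x(t)-\overline{x}(t)\|^2 + 2p\varphi_{G,p,q}(x(t)) = 0$ via the second identity, the two-sided bound from Theorem \ref{Poincare}, and then integration of the resulting scalar differential inequality. The only difference is that the paper compresses the final integration into ``Immediately, Theorem \ref{Decay} follows,'' whereas you carry it out explicitly, including the correct observation that monotone decay of $X^2$ justifies the $(\cdot)_+$ truncation after finite-time extinction when $1 \leq p < 2$.
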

\begin{proof}
Multiplying  \eqref{decay-Clique} by $\bm{1} _ V $ and using Lemma \ref{Lem-A1}, we have $ \overline{x} (t) '  = 0  $,
namely, $ \overline{x} (t)   =   \overline{x} _0  $ for every $t > 0 $. 
Then testing 
\begin{equation*}
( x (t)  - \overline{x} (t) ) '   + D \varphi _{G, p, q } (x(t)) = 0
\end{equation*}
by $x (t)  - \overline{x} (t) $, 
we have 
\begin{equation*}
 \frac{d}{dt} \|   x (t)  - \overline{x} (t) \| ^2    + 2p  \varphi _{G, p, q } (x(t)) = 0
\end{equation*}
and from Theorem \ref{Decay}, we derive
\begin{equation*}
 - 2   \Gamma _{G, p}   \|   x (t)  - \overline{x} (t) \| ^p 
\leq 
 \frac{d}{dt} \|   x (t)  - \overline{x} (t) \| ^2 
  \leq  - 2   \gamma _{G, p}   \|   x (t)  - \overline{x} (t) \| ^p .
\end{equation*}
Immediately, Theorem \ref{Decay} follows.
\end{proof}

\begin{Rem}
{\rm Let $  \varphi  ^\lambda _{G , p  }$ stand for the Moreau-Yosida regularization of $\varphi _{G ,p }$ 
and $ R ^ {\lambda } _{G , p } :=  ( { \rm id} +\lambda \partial \varphi _{G,p }) ^{-1} $ 
be the resolvent of the hypergraph Laplacian with $\lambda > 0 $.
Then we have 
\begin{align*}
\varphi   ^\lambda _{G , p  } (x) 
& \geq  \frac{1}{2\lambda } \| x - R ^ {\lambda } _{G , p } x \| ^2 
 + \varphi    _{G , p  } ( R ^ {\lambda } _{G , p } x ) \\
& \geq  \gamma _{G , p } \|  R ^ {\lambda } _{G , p } x - \overline{R ^ {\lambda } _{G , p } x } \| ^p 
\geq  \gamma _{G , p } \|  R ^ {\lambda } _{G , p } x - \overline{x } \| ^p. 
\end{align*}
Here remark that 
 we can  get  $\overline{R ^ {\lambda } _{G , p } x } = \overline{ x }  $ by
testing $ R ^ {\lambda } _{G , p }  x+ \lambda  \partial \varphi _{G ,p } (   R ^ {\lambda } _{G , p }  x) \ni x   $
by $\bm{1} _ V $. 
Hence though the Yosida approximation is 
one of the most standard approximations for the subdifferential and maximal monotone operator, 
it does not satisfy the Poincar\'{e} inequality,
which the original hypergraph Laplacian fulfills.
Hence  our approximation based on the clique expansion 
might be  better than other approximations in terms of the structural preserving. 
}
\end{Rem}

We next consider the convergence of the resolvent  and the Yosida approximation.
According to the abstract results \cite{Mosco1, Mosco2}, 
$( { \rm id} +\lambda \partial \varphi _{G,p , q }) ^{-1}  (x )$
converges to 
$( { \rm id} +\lambda \partial \varphi _{G,p }) ^{-1}  (x )$
since Lemma \ref{Clique-Conver} implies that $ \varphi _{G , p , q } \to \varphi _{G, p }$
in the sense of Mosco.
The resolvent of the hypergraph Laplacian 
is used to investigate the geometrical structure of  the hypergraphs in \cite{Aka, IKTU, KM}
and the PageRank of the hypergraph in \cite{TMIY}.
Hence by establishing more accurate convergence rate of the resolvent 
we might be able to study these problem more precisely 
with the clique expansion of the hypergraph Laplacian.
Henceforth, let 
the resolvent of $\partial \varphi _{G ,p }$ and   $D\varphi _{G ,p ,q }$  be denoted by 
\begin{equation*}
R ^ {\lambda } _{G , p } :=  ( { \rm id} +\lambda \partial \varphi _{G,p }) ^{-1} ,~~~~~
R ^ {\lambda } _{G , p ,q  } :=  ( { \rm id} +\lambda D \varphi _{G,p ,q  }) ^{-1} ,
\end{equation*}
and the Yosida approximation by 
\begin{equation*}
A ^ {\lambda } _{G , p } := \frac{{\rm id}  -  R ^ {\lambda } _{G , p }}{\lambda } ,~~~~~
A ^ {\lambda } _{G , p ,q  } := \frac{{\rm id}  -  R ^ {\lambda } _{G , p ,q }}{\lambda }  .
\end{equation*}

\begin{Th}
\label{Resolvent} 
For any $ p ,q  >1  $, 
the resolvent of $\partial \varphi _{G ,p }$ and $D \varphi _{G,p ,q  }$  satisfy
\begin{equation}
\label{Res-Conv}
\|  R ^ {\lambda } _{G , p } x - R ^ {\lambda } _{G , p , q} x  \| ^2    \leq 
\lambda  
\kappa _{G ,p }
\LC \nu _E ^{p/q } -1 
\RC  \| x \|  ^p ~~~~\forall x \in \R ^N ,
\end{equation}
where $\nu _E := \max _{e \in E} \LC \frac{\# e (\# e -1 )}{2 }  \RC  $ and 
$\kappa _{G ,p }$ is a constant  in \eqref{bound}.
Moreover, if 
\begin{equation}
\label{YosiAp-Conv01}
 \frac{ p \log \nu _E }{ \log \LC \lambda ^{1+\delta } +1   \RC } \leq q   
\end{equation}
with some $\delta > 0 $, then 
the Yosida approximation of $\partial \varphi _{G ,p }$ and $D \varphi _{G,p ,q  }$  satisfy
\begin{equation}
\label{YosiAp-Conv02}
\|  A ^ {\lambda } _{G , p } x - A ^ {\lambda } _{G , p , q} x  \| ^2    \leq 
\lambda  ^{\delta } \kappa _{G ,p } \| x \| ^p     ~~~~\forall x \in \R ^N .
\end{equation}
\end{Th}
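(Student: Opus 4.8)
The plan is to compare the two resolvents directly by subtracting their defining inclusions and testing against their difference. Write $u := R^{\lambda}_{G,p}x$ and $v := R^{\lambda}_{G,p,q}x$, so that there exist $\xi \in \partial\varphi_{G,p}(u)$ and $\eta = D\varphi_{G,p,q}(v) \in \partial\varphi_{G,p,q}(v)$ with $u + \lambda\xi = x$ and $v + \lambda\eta = x$. Subtracting gives $(u-v) + \lambda(\xi-\eta) = 0$, and taking the inner product with $u-v$ yields
\[
\|u - v\|^2 = -\lambda(\xi - \eta)\cdot(u - v).
\]
Thus the whole estimate reduces to controlling $-(\xi-\eta)\cdot(u-v)$.

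The heart of the argument is to bound this quantity by playing the two subgradient inequalities for the two \emph{different} convex functions against each other. Since $\xi \in \partial\varphi_{G,p}(u)$ we have $\xi\cdot(v-u) \leq \varphi_{G,p}(v) - \varphi_{G,p}(u)$, and since $\eta \in \partial\varphi_{G,p,q}(v)$ we have $\eta\cdot(u-v)\leq\varphi_{G,p,q}(u)-\varphi_{G,p,q}(v)$. Adding these and rearranging gives
\[
-(\xi-\eta)\cdot(u-v) \leq \bigl[\varphi_{G,p}(v) - \varphi_{G,p,q}(v)\bigr] + \bigl[\varphi_{G,p,q}(u) - \varphi_{G,p}(u)\bigr].
\]
Here the asymmetry between the two functionals is the decisive point: the ordering $\varphi_{G,p} \leq \varphi_{G,p,q}$ from \eqref{Sec2-6-1} makes the first bracket nonpositive, so it can simply be discarded, while the one-sided gap \eqref{Sec2-7} of Lemma \ref{Clique-Conver} bounds the second bracket by $(\nu_E^{p/q}-1)\,\varphi_{G,p}(u)$.

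It then remains only to estimate $\varphi_{G,p}(u)$. Because $\varphi_{G,p}\geq0=\varphi_{G,p}(0)$, the origin minimizes $\varphi_{G,p}$, so $0\in\partial\varphi_{G,p}(0)$ and hence $R^{\lambda}_{G,p}(0)=0$; nonexpansiveness of the resolvent then gives $\|u\|\leq\|x\|$, and \eqref{bound} yields $\varphi_{G,p}(u)\leq\kappa_{G,p}\|u\|^p\leq\kappa_{G,p}\|x\|^p$. Feeding this back produces exactly \eqref{Res-Conv}. For the Yosida part I would use only the identity $A^{\lambda}_{G,p}x - A^{\lambda}_{G,p,q}x = \lambda^{-1}(v-u)$, so that by \eqref{Res-Conv}
\[
\|A^{\lambda}_{G,p}x - A^{\lambda}_{G,p,q}x\|^2 = \lambda^{-2}\|u-v\|^2 \leq \lambda^{-1}\kappa_{G,p}\bigl(\nu_E^{p/q}-1\bigr)\|x\|^p .
\]
The hypothesis \eqref{YosiAp-Conv01} is precisely what absorbs the dangerous factor $\lambda^{-1}(\nu_E^{p/q}-1)$: taking logarithms, it is equivalent to $\nu_E^{p/q}\leq\lambda^{1+\delta}+1$, i.e.\ $\lambda^{-1}(\nu_E^{p/q}-1)\leq\lambda^{\delta}$, which gives \eqref{YosiAp-Conv02}.

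I do not expect a serious analytic obstacle; the proof is short once the correct combination of inequalities is located. The only step demanding genuine care is the bookkeeping of signs and of the two directions in the subgradient inequalities, so that the favorable ordering $\varphi_{G,p}\leq\varphi_{G,p,q}$ is actually exploited and only the one-sided estimate \eqref{Sec2-7} enters the final bound; getting this accounting backwards would spoil the clean factor $\nu_E^{p/q}-1$.
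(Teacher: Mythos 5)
Your proof is correct and follows essentially the same route as the paper's: subtract the two resolvent relations, test with the difference, play the two subgradient inequalities for $\varphi_{G,p}$ and $\varphi_{G,p,q}$ against each other, discard the nonpositive bracket via $\varphi_{G,p}\leq\varphi_{G,p,q}$, bound the remaining gap by Lemma \ref{Clique-Conver} and \eqref{bound}, and convert to the Yosida estimate by dividing by $\lambda^2$ under \eqref{YosiAp-Conv01}. The only difference is cosmetic: you make explicit the nonexpansiveness step $\|R^{\lambda}_{G,p}x\|\leq\|x\|$ (via $R^{\lambda}_{G,p}0=0$) needed to replace $\varphi_{G,p}(R^{\lambda}_{G,p}x)$ by $\kappa_{G,p}\|x\|^p$, which the paper leaves implicit.
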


\begin{proof}
Let $ \xi : = R ^ {\lambda } _{G , p }  x $ and $\xi ^{q } := R ^ {\lambda } _{G , p ,q }  x  $, i.e., these be solutions to 
\begin{equation*}
\xi + \lambda  \partial \varphi _{G ,p } ( \xi ) \ni x , ~~~~~~
\xi ^q + \lambda  D \varphi _{G ,p ,q } ( \xi  ^q ) = x  .
\end{equation*}
Multiplying 
\begin{equation*}
( \xi ^q  - \xi )   + \lambda  D \varphi _{G ,p ,q } ( \xi  ^q )
- \lambda  \partial \varphi _{G ,p } ( \xi ) \ni 0 
\end{equation*}
by $ \xi ^q  - \xi$ and using 
\begin{align*}
& D \varphi _{G ,p ,q } ( \xi  ^q ) \cdot  (\xi ^q  - \xi) \geq \varphi _{G ,p ,q } ( \xi  ^q ) - \varphi _{G ,p ,q } ( \xi  ) , \\
& \eta  \cdot (\xi ^q  - \xi) \leq  \varphi _{G ,p } (\xi ^q ) - \varphi _{G ,p } (\xi  ) 
~~~~~ 
(\eta \in \partial \varphi _{G ,p } ( \xi ) ) ,
\end{align*}
we obtain 
\begin{equation*}
\|  \xi ^q  - \xi  \| ^2    + \lambda  
\LC 
\varphi _{G ,p ,q } ( \xi  ^q ) - \varphi _{G ,p ,q } ( \xi  )
- \varphi _{G ,p } (\xi ^q ) +  \varphi _{G ,p } (\xi  ) 
\RC 
\leq  0 .
\end{equation*}
Since $\varphi _{G ,p ,q } ( \xi  ^q )  \geq \varphi _{G ,p } (\xi ^q )$,
we derive \eqref{Res-Conv} from Lemma \ref{Clique-Conver}  and \eqref{bound}.

When $ \nu _E  ^{p /q} -1 \leq \lambda ^{1 + \delta }$, which is equivalent to \eqref{YosiAp-Conv01},
we have 
\begin{equation*}
\|  x-  R ^ {\lambda } _{G , p } x - x +  R ^ {\lambda } _{G , p , q} x  \| ^2    \leq 
\lambda  ^{2 +\delta }
\kappa _{G ,p }
  \| x \|  ^p .
\end{equation*}
Hence dividing this inequality by $\lambda ^2 $, we obtain \eqref{YosiAp-Conv02}.
\end{proof}

\begin{Rem}
{\rm
It is well known that  
the Yosida approximation of the maximal monotone operator 
converges to the minimal section. Namely, $A ^{\lambda } _{G , p } x $ tends to 
\begin{equation*}
( \partial \varphi _{G,p } ) ^{\circ } (x)
:= \LD \eta ^\circ \in   \partial \varphi _{G,p }  (x) ; ~
\| \eta ^ \circ \|  = \min _{ \eta \in   \partial \varphi _{G,p }  (x) } \| \eta \| \RD  ,
\end{equation*}
where such $\eta ^{\circ } $ is determined uniquely since 
$ \partial \varphi _{G,p }  (x) $ forms a  closed convex subset in $\R ^N $.
Hence \eqref{YosiAp-Conv02} implies that 
the Yosida approximation of $D  \varphi _{G,p ,q  }$ (approximation based on the clique expansion)
converges to 
$( \partial \varphi _{G,p } ) ^{\circ } (x) $ as $ \lambda \to 0 $ and $q\to \infty $ appropriately.
}
\end{Rem}

Finally, we consider the first positive eigenvalue of the hypergraph Laplacian and its approximation.
It is easy to see that 
\begin{equation*}
\varphi _{G ,p } (x) = 0 ~~\Leftrightarrow 
~~\varphi _{G ,p , q } (x) = 0 ~~\Leftrightarrow ~~
\exists c \in \R ~~\text{s.t.}~~x = c \bm{1} _V = (c , \ldots,  c )
\end{equation*}
if  $G $ is connected.
This implies that 
\begin{equation*}
\partial \varphi _{G ,p } (x) \ni 0 ~~\Leftrightarrow 
~~D \varphi _{G ,p , q } (x) = 0 ~~\Leftrightarrow ~~
\exists c \in \R ~~\text{s.t.}~~x = c \bm{1} _V = (c , \ldots,  c )
\end{equation*}
and $ 0 $ is first eigenvalue of $ \partial \varphi _{G ,p } $ and $D \varphi _{G ,p , q }$.
Then we set 
the orthogonal complement of first eigenspace $\{  x = c\bm{1} _V = (c,\ldots , c ) \in \R ^V ; c \in R \}$
by 
\begin{equation*}
\R ^N _{0} := \{ x \in \R ^N ; ~ x \cdot \bm{1} _V = 0 \}
= \{ x \in \R ^N ; ~ \overline{ x } = 0 \}
\end{equation*}
and define 
the second (i.e., the first positive) eigenvalue by 
\begin{equation*}
\lambda  _{1} :=  \inf  _{x \in \R ^N _ 0 \setminus \{ 0\} } \frac{ p \varphi _{G, p  } (x)}{\| x \| ^p} ,
~~~~~
\lambda _{1 ,q } :=  \inf  _{x \in \R ^N _ 0 \setminus \{ 0\} } \frac{ p \varphi _{G, p ,q } (x)}{\| x \| ^p} .
\end{equation*}
By Theorem  \ref{Poincare} and Lemma \ref{Clique-Conver},
$\lambda  _{1} $ and $\lambda _{1, q} $ are bounded by $ \gamma _{G,p}  $ and $\Gamma _{G,p }  $ from below and above, respectively. 
Hence we can show the following.
\begin{Le}
\label{Lem-eigen}
Let $ p,q > 1$ and $G $ be connected. Then the mappings 
 $ x \mapsto   p \varphi _{G, p  } (x) / \| x \| ^p $ and 
 $ x \mapsto   p \varphi _{G, p ,q  } (x) / \| x \| ^p $ attain their minimum on 
$\R ^N _ 0 \setminus \{ 0\} $
at some $  \zeta _1 , \zeta  _{1 ,q } \in \R ^N _ 0 $, respectively.
Moreover, 
\begin{equation}
\label{eigen-01} 
\lambda  _{1} =   \frac{ p \varphi _{G, p  } (\zeta _1  )}{\| \zeta _1  \| ^p} ,
~~~~~
\lambda _{1 ,q } =   \frac{ p \varphi _{G, p ,q } (\zeta  _{1 , q }  )}{\| \zeta  _{1 , q }  \| ^p} 
\end{equation}
holds with some  $  \zeta _1 , \zeta  _{1 ,q } \in \R ^N _ 0 $ if and only if 
these satisfy 
\begin{equation}
\label{eigen-02}
\lambda  _{1} \|  \zeta _1  \| ^{q-2} \zeta _1\in \partial  \varphi _{G, p  } (\zeta _1  ) , 
~~~~~
\lambda  _{1, q} \| \zeta  _{1 ,q }  \| ^{q-2}  \zeta  _{1 ,q }  =  D  \varphi _{G, p ,q   } (\zeta  _{1 ,q }  ) .
\end{equation}

\end{Le}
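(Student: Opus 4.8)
The plan is to read the two assertions as an \emph{attainment} statement and an \emph{Euler--Lagrange characterization} for the Rayleigh quotients $R(x):=p\varphi_{G,p}(x)/\|x\|^p$ and $R_q(x):=p\varphi_{G,p,q}(x)/\|x\|^p$. Two structural facts drive everything. First, $\varphi_{G,p}$ and $\varphi_{G,p,q}$ are positively homogeneous of degree $p$, so $R$ and $R_q$ are invariant under nonzero scaling. Second, every (sub)gradient lies in $\R^N_0$: the relation $\bm{1}_V\cdot D\varphi_{G,p,q}(x)=0$ is the first identity of Lemma~\ref{Lem-A1}, and the same holds for $\partial\varphi_{G,p}$ since each generator $\bm{1}_i-\bm{1}_j$ of $B_e$ in \eqref{BP} is orthogonal to $\bm{1}_V$, hence so is every element described by \eqref{Laplacian}. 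For attainment I would use the scaling invariance to reduce the infimum over $\R^N_0\setminus\{0\}$ to the minimum of the continuous map $x\mapsto p\varphi_{G,p}(x)$ over the compact sphere $\{x\in\R^N_0:\|x\|=1\}$, which is attained; the identical reasoning produces $\zeta_{1,q}$ for $\varphi_{G,p,q}$.

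For the characterization I would handle the two directions separately. The converse (eigenvalue equation $\Rightarrow$ attainment) is immediate from the degree-$p$ Euler identity $\eta\cdot x=p\varphi_{G,p}(x)$, valid for every $\eta\in\partial\varphi_{G,p}(x)$ (the convex-analytic counterpart of the second identity in Lemma~\ref{Lem-A1}): pairing $\lambda_1\|\zeta_1\|^{p-2}\zeta_1\in\partial\varphi_{G,p}(\zeta_1)$ with $\zeta_1$ gives $\lambda_1\|\zeta_1\|^p=p\varphi_{G,p}(\zeta_1)$, that is $R(\zeta_1)=\lambda_1$. (Here the natural exponent forced by the homogeneity is $p-2$.)

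The forward direction (attainment $\Rightarrow$ eigenvalue equation) is where the real work lies. I would introduce $\Psi(x):=p\varphi_{G,p}(x)-\lambda_1\|x\|^p$, which satisfies $\Psi(x)=\|x\|^p\bigl(R(x)-\lambda_1\bigr)\geq0$ on $\R^N_0$ with $\Psi(\zeta_1)=0$; thus $\zeta_1$ is a global minimizer of $\Psi$ restricted to the subspace $\R^N_0$. Splitting $\Psi$ into its convex part $p\varphi_{G,p}$ and its smooth part $-\lambda_1\|x\|^p$, and using that $\R^N_0$ is a subspace so that both $\pm v$ are admissible, the first-order condition reads $\max_{\eta\in p\,\partial\varphi_{G,p}(\zeta_1)}\eta\cdot v\geq \lambda_1 p\|\zeta_1\|^{p-2}\zeta_1\cdot v$ for every $v\in\R^N_0$. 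The main obstacle is that $\partial\varphi_{G,p}(\zeta_1)$ is genuinely set-valued and the optimization runs over a \emph{proper} subspace; the orthogonality noted above resolves it, since both $\mu:=\lambda_1 p\|\zeta_1\|^{p-2}\zeta_1$ and the entire set $p\,\partial\varphi_{G,p}(\zeta_1)$ lie in $\R^N_0$, so testing against $v\in\R^N_0$ alone suffices and no multiplier for the constraint $x\cdot\bm{1}_V=0$ survives. By the support-function description of the closed convex set $p\,\partial\varphi_{G,p}(\zeta_1)$, the inequality $\mu\cdot v\leq\max_{\eta}\eta\cdot v$ for all $v\in\R^N_0$ forces $\mu\in p\,\partial\varphi_{G,p}(\zeta_1)$, i.e. $\lambda_1\|\zeta_1\|^{p-2}\zeta_1\in\partial\varphi_{G,p}(\zeta_1)$.

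Finally, the approximate case is strictly easier and I would dispatch it via the smooth version of the same computation. Since $\varphi_{G,p,q}$ is $C^1$ by Lemma~\ref{Clique-Smooth}, $R_q$ is differentiable on $\R^N_0\setminus\{0\}$ with $\nabla R_q(x)=\frac{p}{\|x\|^p}\bigl(D\varphi_{G,p,q}(x)-R_q(x)\|x\|^{p-2}x\bigr)$, and by Lemma~\ref{Lem-A1} this gradient already lies in $\R^N_0$. Hence a minimizer $\zeta_{1,q}$ is an unconstrained critical point within $\R^N_0$, giving $\nabla R_q(\zeta_{1,q})=0$, which is precisely $\lambda_{1,q}\|\zeta_{1,q}\|^{p-2}\zeta_{1,q}=D\varphi_{G,p,q}(\zeta_{1,q})$; the converse once more follows by pairing with $\zeta_{1,q}$ and invoking the second identity of Lemma~\ref{Lem-A1}.
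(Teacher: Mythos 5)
Your proposal is correct, and two of its three components (attainment via homogeneity plus compactness on the sphere $\{x\in\R^N_0:\|x\|=1\}$, and the converse direction via pairing the eigenvalue relation with $\zeta$ and the degree-$p$ Euler identity) coincide with the paper's argument. The forward direction, however, is genuinely different. The paper never invokes first-order optimality conditions: it writes the subgradient inequality for the convex function $x\mapsto\|x\|^p$ at $\zeta$, namely $\|x\|^p-\|\zeta\|^p\geq p\|\zeta\|^{p-2}\zeta\cdot(x-\zeta)$, combines it with the Rayleigh-quotient bound $p\varphi(x)\geq\lambda_1\|x\|^p$ (with equality at $\zeta$) to get the subgradient inequality for $\varphi$ on $\R^N_0$, and then extends it to all of $\R^N$ by decomposing $z=x+c\bm{1}_V$ and using the translation invariance $\varphi(x+c\bm{1}_V)=\varphi(x)$ together with $\zeta\cdot\bm{1}_V=0$; this verifies $\lambda_1\|\zeta\|^{p-2}\zeta\in\partial\varphi(\zeta)$ directly from the definition \eqref{sub}, and the same three lines cover the smooth and nonsmooth functionals uniformly. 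You instead minimize the DC function $\Psi=p\varphi_{G,p}-\lambda_1\|\cdot\|^p$ over the subspace, use the max formula $\varphi'(\zeta_1;v)=\max_{\eta\in\partial\varphi(\zeta_1)}\eta\cdot v$, and upgrade the resulting support-function inequality on $\R^N_0$ to membership $\mu\in\partial\varphi_{G,p}(\zeta_1)$ via the fact that both $\mu$ and the whole subdifferential lie in $\R^N_0$; in the smooth case you reduce to $\nabla R_q(\zeta_{1,q})=0$. Both routes are sound, and they exploit the same underlying structure in equivalent guises (invariance of $\varphi$ along $\bm{1}_V$ versus orthogonality of $\partial\varphi$ to $\bm{1}_V$). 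The paper's route buys elementarity and a single unified computation; yours buys a cleaner conceptual frame (Euler--Lagrange for a Rayleigh quotient) and a one-line smooth case, at the cost of the max-formula and a separation-type step. Two small remarks: your observation that the exponent in \eqref{eigen-02} should be $p-2$ rather than $q-2$ is right and is confirmed by the paper's own proof; and in the converse direction the paper additionally derives $\zeta_{1,q}\cdot\bm{1}_V=0$ from the eigenvalue equation itself (by multiplying with $\bm{1}_V$), a step your write-up leaves implicit in your structural fact that subgradients are orthogonal to $\bm{1}_V$ --- you would need $\lambda_1>0$ (i.e.\ connectedness, via Theorem \ref{Poincare}) to close that implication, so it is worth stating.
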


\begin{proof}
Since 
\begin{equation*}
f_{e} (c x ) = |c| f_e (x), ~~~~f_{e , q} (c x ) = |c| f_{ e , q } (x)
\end{equation*}
hold for any $x \in \R ^N $ and $ c \in \R $,
we can easily see that $\varphi _{G ,p }$ and $\varphi _{G ,p , q}$ are  
homogeneous of degree $p$ 
and we can restrict ourselves to $x  $ belonging to 
a compact set $ \BS ^N _ 0 := \{ x \in \R ^N _0 ;  ~\| x\| = 1 \} $.
Then by the standard argument for the convergence of minimizing sequences,
we can assure that 
 $ x \mapsto   p \varphi _{G, p  } (x) / \| x \| ^p $ and 
 $ x \mapsto   p \varphi _{G, p ,q  } (x) / \| x \| ^p $ attain their minimum at some $  \zeta _1 , \zeta  _{1 ,q } \in \BS ^N _ 0 $.

Assume that $  \zeta _{1 ,q }  \in \R  ^N _ 0 \setminus \{ 0 \}$ satisfies \eqref{eigen-01}.
Since $x \mapsto  \| x\| ^ p $ is convex and its derivative is $ p \| x \| ^{p-2} x$,
we can get by the definition of the subdifferential 
\begin{equation*}
\| z \| ^p -  \| \zeta _{1 ,q }  \| ^p \geq p \| \zeta _{1 ,q } \| ^{p-2} \zeta _{1 ,q } \cdot (z - \zeta _{1 ,q }) ~~~\forall z \in \R ^N . 
\end{equation*}
By the definition of $\lambda _{1 ,q } $, we have $ \lambda _{1 ,q } \leq p \varphi _{G, p ,q  } (x) / \|  x\| ^p  $ for any $x \in \R ^N _0 \setminus \{ 0 \}$
and 
\begin{equation*}
\frac{ p \varphi _{G, p ,q } (x )}{\lambda _{1 ,q } } -
\frac{ p \varphi _{G, p ,q } (\zeta _{1 ,q } )}{\lambda _{1 ,q } }  \geq p \| \zeta _{1 ,q } \| ^{p-2} \zeta _{1 ,q } \cdot (x - \zeta _{1 ,q })
 ~~~\forall x \in \R ^N _0. 
\end{equation*}
Here for every $z \in \R ^N $, there exist $x \in \R ^N _ 0$ and $ c \in \R $ such that $ z = x + c \bm{1} _V$.
Moreover, from $ f_{e ,q } ( x + c \bm{1} _V  ) = f _{e ,q } ( x ) $, 
 $ \varphi _{G ,p ,q } ( x + c \bm{1} _V  ) = \varphi _{G ,p ,q } ( x ) $,
and $\zeta _{1 ,q } \cdot (x + c \bm{1} _V ) = \zeta _{1 ,q } \cdot x $, it follows that 
\begin{equation*}
\varphi _{G, p } (z ) -
 \varphi _{G, p } (\zeta _{1 ,q } ) \geq  \lambda _{1 ,q } \| \zeta _{1 ,q } \| ^{p-2} \zeta_{1 ,q } \cdot (z - \zeta _{1 ,q })
 ~~~\forall z \in \R ^N , 
\end{equation*}
which implies that $ \lambda _{1 ,q } \| \zeta _{1 ,q } \| ^{p-2} \zeta _{1 ,q } $ satisfies the definition of the subgradient 
of $\varphi _{G,p ,q } $ at $\zeta _{1 ,q } $.
By exactly the same argument,  
$  \zeta _{1  }  \in \R  ^N _ 0 \setminus \{ 0 \}$ satisfying  \eqref{eigen-01} also fulfills \eqref{eigen-02}

Conversely, if 
 $  \zeta _{1 ,q }  \in \R  ^N  \setminus \{ 0 \}$ satisfies \eqref{eigen-02}, 
we have  $  \zeta _{1 ,q }\cdot \bm{1}_V   = 0$, i.e., $  \zeta _{1 ,q }\in \R ^N _0 $.
Then multiplying \eqref{eigen-02}  by $\zeta _{1 ,q }$ and using  \eqref{Ap-eq01}, we obtain  
\begin{equation*}
 \lambda _{1 ,q } \| \zeta _{1 ,q } \| ^{p} =p \varphi _{G, p , q} (\zeta _{1 ,q } ) , 
\end{equation*}
which implies that $ \zeta _{1 , q }$ is the minimizer of  $ x \mapsto   p \varphi _{G, p ,q  } (x) / \| x \| ^p $
on $\R ^N _0$. 
Since the original hypergraph also satisfies the same type identity as  \eqref{Ap-eq01}
(see \cite{I-U}), 
we can apply this argument to 
 $  \zeta _{1 }  \in \R  ^N  \setminus \{ 0 \}$ satisfying  \eqref{eigen-02}
 and 
 we can show that 
$  \zeta _{1 }  \in \R  ^N  \setminus \{ 0 \}$ 
 is the minimizer of  $ x \mapsto   p \varphi _{G, p  } (x) / \| x \| ^p $
on $\R ^N _0$.
\end{proof}

We can show $\lambda _{1 , q } \to \lambda _1 $ as $q \to \infty $. On the other hand, 
since $\varphi _{G,p } (- z)  =\varphi _{G,p } (z)  $ and  $\varphi _{G,p ,q } (- z)  =\varphi _{G,p ,q} (z)  $ hold,
the minimizers of 
 $ x \mapsto   p \varphi _{G, p  } (x) / \| x \| ^p $, $ x \mapsto   p \varphi _{G, p ,q  } (x) / \| x \| ^p $ are not determined uniquely
 in general
 and then $ \zeta _{1 , q } \to \zeta _1$ is not necessarily satisfied depending on the choice of 
$\zeta _1 $ and the sequence $\{ \zeta _{1, q} \}$.
However, we can assure the following:

\begin{Th}
Let $\lambda _1 $ and $\lambda _{1, q } $ ($q >1 $) be defined by \eqref{eigen-01}.
Then 
\begin{equation}
\label{eigen-03}
\lambda _1  \leq \lambda _{1 ,q } \leq \nu ^{p /q } _E \lambda _1 , 
\end{equation}
where $\nu _E := \max _{e \in E} \LC \frac{\# e (\# e -1 )}{2 }  \RC  $.
Furthermore, 
let $\{  \zeta _{1 , q } \} _{q >1 } \subset \BS ^{N }_ 0$ be a sequence of solutions to \eqref{eigen-02}.
Then for any convergent  subsequence 
$ \{  \zeta _{1 , q ^j } \} _{j \in \N } \subset \{  \zeta _{1 , q } \} _{q >1 }  $
($q _ j \to \infty$ as $j \to \infty $), its limit $\zeta \in \BS ^N _0 $ satisfy 
\begin{equation}
\label{eigen-04}
\lambda  _{1} =    p \varphi _{G, p  } (\zeta  ) ,
~~~~
\text{that is to say,}
~~~~
\lambda  _{1}  \zeta \in \partial  \varphi _{G, p  } (\zeta  ) .
\end{equation}
\end{Th}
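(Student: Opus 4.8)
The plan is to obtain the two-sided bound \eqref{eigen-03} directly from the pointwise comparison \eqref{Sec2-6-1}, and then to extract \eqref{eigen-04} by passing to the limit in the Rayleigh-type identity for $\lambda_{1,q}$ along the convergent subsequence. For \eqref{eigen-03} I would simply recall \eqref{Sec2-6-1}, namely $\varphi_{G,p}(x)\le \varphi_{G,p,q}(x)\le \nu_E^{p/q}\varphi_{G,p}(x)$ for all $x\in\R^N$; dividing by $\|x\|^p$ over $x\in\R^N_0\setminus\{0\}$, multiplying by $p$, and taking the infimum yields $\lambda_1\le\lambda_{1,q}$ from the left inequality and $\lambda_{1,q}\le\nu_E^{p/q}\lambda_1$ from the right one. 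Since $\#e\ge 2$ gives $\nu_E\ge 1$ and hence $\nu_E^{p/q}\to 1$ as $q\to\infty$, the squeeze in \eqref{eigen-03} already forces $\lambda_{1,q}\to\lambda_1$; this limit is the backbone of the second assertion.

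For the convergence of minimizers, fix a convergent subsequence $\zeta_{1,q^j}\to\zeta$ in $\BS^N_0$ with $q^j\to\infty$. Since $\BS^N_0$ is closed, $\zeta\in\BS^N_0$, so $\|\zeta\|=1$ and $\zeta\in\R^N_0$. Because each $\zeta_{1,q^j}$ solves \eqref{eigen-02}, Lemma \ref{Lem-eigen} (the equivalence with \eqref{eigen-01}) and $\|\zeta_{1,q^j}\|=1$ give $\lambda_{1,q^j}=p\,\varphi_{G,p,q^j}(\zeta_{1,q^j})$. The heart of the argument is then to show $p\,\varphi_{G,p,q^j}(\zeta_{1,q^j})\to p\,\varphi_{G,p}(\zeta)$, for which I would split
\[
\LZ \varphi_{G,p,q^j}(\zeta_{1,q^j})-\varphi_{G,p}(\zeta)\RZ
\le
\LZ \varphi_{G,p,q^j}(\zeta_{1,q^j})-\varphi_{G,p}(\zeta_{1,q^j})\RZ
+
\LZ \varphi_{G,p}(\zeta_{1,q^j})-\varphi_{G,p}(\zeta)\RZ .
\]
The first term is controlled by Lemma \ref{Clique-Conver} (estimate \eqref{Sec2-7}) together with the $q$-independent bound \eqref{bound}: it is at most $(\nu_E^{p/q^j}-1)\,\varphi_{G,p}(\zeta_{1,q^j})\le(\nu_E^{p/q^j}-1)\,\kappa_{G,p}$, which tends to $0$ because $\|\zeta_{1,q^j}\|=1$ and $\nu_E^{p/q^j}\to1$. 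The second term tends to $0$ by continuity of $\varphi_{G,p}$ and $\zeta_{1,q^j}\to\zeta$. Combining with $\lambda_{1,q^j}\to\lambda_1$ yields $\lambda_1=p\,\varphi_{G,p}(\zeta)$, which is the first half of \eqref{eigen-04}.

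Finally, since $\|\zeta\|=1$, the identity $\lambda_1=p\,\varphi_{G,p}(\zeta)=p\,\varphi_{G,p}(\zeta)/\|\zeta\|^p$ says exactly that $\zeta$ attains the infimum defining $\lambda_1$ over $\R^N_0\setminus\{0\}$, i.e.\ \eqref{eigen-01} holds for the original functional with $\zeta_1=\zeta$. Invoking the ``\eqref{eigen-01}$\Rightarrow$\eqref{eigen-02}'' direction of Lemma \ref{Lem-eigen} for the genuine hypergraph Laplacian then gives $\lambda_1\|\zeta\|^{p-2}\zeta\in\partial\varphi_{G,p}(\zeta)$, that is, $\lambda_1\zeta\in\partial\varphi_{G,p}(\zeta)$, completing \eqref{eigen-04}. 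The only delicate point is the simultaneous passage to the limit in both the functional $\varphi_{G,p,q^j}$ and its moving argument $\zeta_{1,q^j}$ in the first term above; the hard part is really only to see that this causes no trouble, which it does not, precisely because the comparison constant in Lemma \ref{Clique-Conver} is uniform in $q$ and the arguments lie on the compact sphere $\BS^N_0$, so \eqref{bound} alone supplies the needed uniform control and no further equicontinuity estimate is required.
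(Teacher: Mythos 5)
Your proof is correct; the first half coincides with the paper's argument, while the second half takes a noticeably different (equally valid) route. For \eqref{eigen-03} the paper evaluates the two-sided comparison \eqref{Sec2-6-1} at the respective minimizers $\zeta_1$ and $\zeta_{1,q}$, whereas you take infima of the pointwise bound over $\R^N_0\setminus\{0\}$ — the same content in slightly cleaner form. For \eqref{eigen-04} the paper works at the level of the variational inequality: it writes, for each fixed $z$, the subgradient inequality
\begin{equation*}
\lambda_{1,q^j}\,\zeta_{1,q^j}\cdot\bigl(z-\zeta_{1,q^j}\bigr)\le \varphi_{G,p,q^j}(z)-\varphi_{G,p,q^j}(\zeta_{1,q^j}),
\end{equation*}
and passes to the limit using $\lambda_{1,q^j}\to\lambda_1$ and the $q$-uniform comparison of Lemma \ref{Clique-Conver} on $\BS^N_0$, obtaining the inclusion $\lambda_1\zeta\in\partial\varphi_{G,p}(\zeta)$ directly, with the Rayleigh identity then read off via the equivalence in Lemma \ref{Lem-eigen}. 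You invert this order: you pass to the limit only in the scalar identity $\lambda_{1,q^j}=p\,\varphi_{G,p,q^j}(\zeta_{1,q^j})$ (your triangle-inequality split, which is the same uniform-convergence-plus-continuity mechanism the paper uses implicitly to handle the moving argument $\zeta_{1,q^j}$), deduce $\lambda_1=p\,\varphi_{G,p}(\zeta)$ first, and only then invoke the ``\eqref{eigen-01}$\Rightarrow$\eqref{eigen-02}'' direction of Lemma \ref{Lem-eigen} to recover the subdifferential inclusion. The trade-off: your limit passage is more elementary (scalar quantities only, no family of test points $z$), but it leans on Lemma \ref{Lem-eigen} for the conversion to the inclusion, whereas the paper's route needs only the definition of the subgradient in the limit step and is closer in spirit to a Mosco-convergence/demiclosedness argument. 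Both rest on the same two pillars — the squeeze $\lambda_{1,q}\to\lambda_1$ from \eqref{eigen-03} and the $q$-uniformity of the constant in Lemma \ref{Clique-Conver} on the compact sphere — so your final remark about why the simultaneous limit causes no trouble is exactly the right point to flag.
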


\begin{proof}
According to \eqref{Sec2-6-1}, we have 
\begin{align*}
& \lambda _1 = \varphi _{G ,p } (\zeta _{1}) \leq \varphi _{G ,p } (\zeta _{1, q }) 
\leq 
\varphi _{G ,p , q  } (\zeta _{1, q }) = \lambda _{1,q} ,
\\
& \lambda _{1, q } =  \varphi _{G ,p , q  } (\zeta _{1, q } ) \leq 
   \varphi _{G ,p ,q  } (\zeta _{1})
\leq 
  \nu _E  ^{ \frac{p}{q }    }  \varphi _{G ,p  } (\zeta _{1}) =  \nu _E  ^{ \frac{p}{q }    } \lambda _1 ,  
\end{align*}
which leads to \eqref{eigen-03}.
Next let  
$ \{  \zeta _{1 , q ^j } \} _{j \in \N } $ be a convergent subsequence of $ \{  \zeta _{1 , q } \} _{q >1 }  $
and its limit be written by  $\zeta \in \BS ^N _0 $.
By \eqref{eigen-02} and the definition of the subgradient, we have
\begin{equation*}
\lambda _{1 , q ^j } \zeta _{1 , q ^j } (z - \zeta _{1 , q ^j } ) \leq \varphi _{G, p ,q^j  } (z) -\varphi _{G, p ,q^j  } ( \zeta _{1 , q ^j }) .
\end{equation*}
Since from Lemma \ref{Clique-Conver}, i.e.,  the uniform convergence of $\varphi _{G ,p, q ^j }$
to $\varphi _{G, p} $ on the compact sets $\BS ^N _ 0 $, we obtain 
\begin{equation*}
\lambda _{1  } \zeta  (z - \zeta  ) \leq \varphi _{G, p   } (z) -\varphi _{G, p } ( \zeta  ) ,
\end{equation*}
which implies $ \lambda _{1  } \zeta  \in \partial \varphi _{G, p } ( \zeta  )$. 
\end{proof}

\subsection*{Acknowedgements}
Takeshi Fukao  is supported by 
JSPS Grant-in-Aid for Scientific Research (C) (No.21K03309). 
Masahiro Ikeda  is supported by 
JSPS Grant-in-Aid for Scientific Research (C) (No.23K03174)
and 
JST CREST Grant (No.JPMJCR\\1913).
Shun Uchida is supported by 
JSPS Grant-in-Aid for Scientific Research (C) (No.24K06799)
and 
Sumitomo Foundation Fiscal 2022 Grant for Basic Science Research Projects (No.2200250).

\end{document}